\documentclass{amsart}
\usepackage{amsmath,amsfonts,amsthm,amssymb,amsthm,graphicx,mathtools}

\usepackage[mathscr]{eucal}
\usepackage{graphics}
\usepackage{color}
\usepackage{epsfig}
\usepackage{amsmath, graphicx,  amsthm, amsfonts, amscd}
\usepackage{stackrel}
\newcommand\vertarrowbox[3][6ex]{%
  \begin{array}[t]{@{}c@{}} #2 \\
  \left\downarrow\vcenter{\hrule height #1}\right.\kern-\nulldelimiterspace\\
  \makebox[0pt]{\scriptsize#3}
  \end{array}%
}

\def\moc{m}
\def\TT{L}
\def\fplambda{{\mathcal X}}
\def\numb{I}
\def\neweta{\pi}
\def\fsp{\psi}
\def\neweta{\Upsilon}

\def\ttpsi{\tilde{f}}
\def\tpsi{f}
 
\def\f1{{\bf 1}}
\def\zerof{{\bf 0}}
\def\newdel{\theta} 

 \topmargin 0.0truein
 \oddsidemargin 0.0truein
 \evensidemargin 0.0truein
 \textheight 8.5truein
 \textwidth 6.5truein
 \footskip 0.6truein
 \headheight 0.0truein
 \headsep 0.3truein
 \parskip 0.5em

\DeclareMathOperator*{\essinf}{ess\,inf}
\DeclareMathOperator*{\esssup}{ess\,sup}

\def\theequation{\arabic{section}.\arabic{equation}}
\def\thetheorem{\arabic{section}.\arabic{theorem}}

\newtheorem{theorem}{Theorem}[section]
\newtheorem{lemma}[theorem]{Lemma}
\newtheorem{corollary}[theorem]{Corollary}
\newtheorem{definition}[theorem]{Definition}

\newtheorem{proposition}[theorem]{Proposition}

\newtheorem{assumption}[theorem]{Assumption}

\newtheorem{remark}[theorem]{Remark}

\def\thelemma{\arabic{section}.\arabic{lemma}}
\def\thetheorem{\arabic{section}.\arabic{theorem}}
\def\thecorollary{\arabic{section}.\arabic{corollary}}
\def\thedefinition{\arabic{section}.\arabic{definition}}
\def\theexample{\arabic{section}.\arabic{example}}
\def\theproposition{\arabic{section}.\arabic{proposition}}

\def\theassumption{\arabic{section}.\arabic{assumption}}

\def\theremark{\arabic{section}.\arabic{remark}}

\def\theequation{\arabic{section}.\arabic{equation}}


\newcommand{\la}{\lambda}
\newcommand{\eps}{\varepsilon}
\newcommand{\ph}{\varphi}

\newcommand{\N}{{\mathbb N}}

\newcommand{\R}{{\mathbb R}}

\newcommand{\Z}{{\mathbb Z}}

\newcommand{\PP}{{\mathbb P}}

\newcommand{\calB}{{\mathcal B}}

\newcommand{\calD}{{\mathcal D}}

\newcommand{\calM}{{\mathcal M}}

\newcommand{\calT}{{\mathcal T}}

\newcommand{\lan}{\langle}
\newcommand{\ran}{\rangle}

\newcommand{\supp}{{\rm supp}}

\newcommand{\w}{\wedge}

\newcommand{\pl}{\partial}

\newcommand{\mean}[1]{\langle#1\rangle}

\newcommand{\iy}{\infty}

\newcommand{\be}{\begin{equation}}
	\newcommand{\ee}{\end{equation}}

\newcommand{\noi}{\noindent}
\newcommand{\ds}{\displaystyle}


\newcommand{\manualnames}[1]{
\def\theequation{#1.\arabic{equation}}
\def\thelemma{#1.\arabic{lemma}}
\def\thetheorem{#1.\arabic{theorem}}
\def\thecorollary{#1.\arabic{corollary}}
\def\thedefinition{#1.\arabic{definition}}
\def\theexample{#1.\arabic{example}}
\def\theproposition{#1.\arabic{proposition}}
\def\theassumption{#1.\arabic{assumption}}
\def\theremark{#1.\arabic{remark}}
}

\newcommand{\beginsec}{
\setcounter{equation}{0}
}

\def\fmeass{{\nu}_*}
\def\frenegs{\eta_*}
\def\fq{Q}
\def\fe{E}
\def\fr{R}
\def\fs{S}
\def\fx{X}
\def\fd{D}
\def\fk{K}
\def\fb{B}

\def\fmeasn{\nu^{(N)}}
\def\frenegn{{\eta}^{(N)}}
\def\fmeas{\nu}
\def\freneg{\eta}
\def\fmeass{\nu_*}

\def\fntf{F^{\eta_t}}
\def\fnsf{F^{\eta_s}}

\newcommand{\newspace}{{\mathcal S}_0}
\newcommand\newcspace{{\mathfrak{S}}}

\newcommand{\nrm}[1]{\left\Vert #1 \right\Vert}

\def\newf{\varphi}
\def\newft{\varphi(\cdot,t)}

\def\newfs{\varphi(\cdot,s)}
\def\dxnewfs{\newf_x(\cdot,s)}
\def\dtnewfs{\newf_s(\cdot,s)}

\def\incspace{{\mathcal I}_{\R_+}[0,\infty)}

\def\ecb{{\mathcal C}_b(E)}
\def\ecc{{\mathcal C}_c (E)}
\def\eocd{{\mathcal C}^1(E)}
\def\eocdc{{\mathcal C}^1_c(E)}
\def\eocdb{{\mathcal C}^1_b(E)}

\def\lloc{{\mathcal L}^1_{loc}}

\def\newocdcpms{{\mathcal C}^1_c([0,H^s)\times\R_+)}
\def\newocdcpmr{{\mathcal C}^1_c([0,H^r)\times\R_+)}

\def\mm{{\mathcal M}[0,H)}

\def\mmfs{{\mathcal M}_F[0,H^s)}

  \def\mmfr{{\mathcal M}_F[0,H^r)}
    \def\mmfcr{{\mathcal M}_F^c[0,H^r)}

\def\eem{{\mathcal M}(E)}
\def\eemf{{\mathcal M}_F(E)}

\newcommand{\Ra}{\Rightarrow}
\newcommand{\ra}{\rightarrow}

\newcounter{bean}
\newcommand{\benuma}{\setlength{\labelwidth}{.25in}
\begin{list}%
{(\alph{bean})}{\usecounter{bean}}}
\newcommand{\eenuma}{\end{list}}



\begin{document}

\title[Asymptotics of coupled measure-valued equations]{Large-time Limit of Nonlinearly Coupled
  Measure-valued Equations that Model Many-Server Queues with Reneging}

\author{Rami Atar}
\address{Viterbi Faculty of Electrical and Computer Engineering \\
Technion \\
Haifa,
Israel} 
\email{rami@technion.ac.il}

\author{Weining Kang}
\address{Department of Mathematics and Statistics, \\
  University of Maryland, Baltimore County}
\email{wkang@umbc.edu} 

\author{Haya Kaspi}
\address{Department of Industrial Engineering and Management\\
Technion \\
Haifa,
Israel} 
 \email{hkaspi@technion.ac.il}

\author{  Kavita Ramanan }
\address{Division of Applied Mathematics \\
Brown University \\
Providence, RI 02118, USA }
\email{kavita\_ramanan@brown.edu }
\thanks{KR is  supported
by the U.S. Army Research Office  via Grant W911NF2010133 
and RA is supported by the Israel Science Foundation grant 1035/20.}

\subjclass[2010]{Primary: 60F17, 60K25, 90B22;  Secondary: 60H99, 35D99.}
\keywords{many-server queues, GI/G/N+G queue, fluid limits, reneging,
  abandonment,  measure-valued processes, renewal equation, large-time behavior, stationary distribution, 
  call centers, enzymatic processing networks, transport equation, age-structured population models}

\date{July 12, 2021}

\begin{abstract}
The large-time behavior of a nonlinearly coupled pair of measure-valued transport equations with discontinuous boundary conditions, parameterized by a positive real-valued parameter $\lambda$, is considered. These equations describe the hydrodynamic or fluid limit of many-server queues with reneging (with traffic intensity $\lambda$), which model phenomena in diverse disciplines, including biology and operations research. For a broad class of reneging distributions with finite mean and service distributions with finite mean and hazard rate function that is either decreasing or bounded away from zero and infinity, it is shown that if the fluid equations have a unique invariant state, then the Dirac measure at this state is the unique random fixed point of the fluid equations, which implies that the stationary distributions of scaled $N$-server systems converge to the unique invariant state of the corresponding fluid equations.  Moreover, when $\lambda\ne1$, it is shown that the solution to the fluid equation starting from any initial condition converges to this unique invariant state in the large time limit. The proof techniques are different under the two sets of assumptions on the service distribution. When the hazard rate function is decreasing, a reformulation of the dynamics in terms of a certain renewal equation is used, in conjunction with recursive asymptotic estimates. When the hazard rate function is bounded away from zero and infinity, the proof uses an extended relative entropy functional as a Lyapunov function. Analogous large-time convergence results are also established for a system of coupled measure-valued equations modeling a multiclass queue.  
\end{abstract}

\maketitle

\section{Introduction}

\subsection{Background, Motivation and Results}

The focus of this work is the analysis of  the large-time  behavior of a nonlinearly coupled pair of 
measure-valued  transport equations with discontinuous boundary conditions that describe the hydrodynamic or fluid limit of a many-server
queue with reneging.  
Many-server queues with reneging arise in a range of 
applications, including as models of computer  networks, 
telephone call centers
or (more general) customer contact centers \cite{DaiHe12,ManZel09,PuhWar19},  and 
enzymatic processing networks in biology, where  reneging seeks to model the phenomenon of dilution  (see, e.g., \cite{Matetal10}). 
 A basic model,  also referred to  as the GI/G/N$+$G queue, consists of a system with $N$ identical servers, to which  jobs arrive with independent and 
 identically distributed (i.i.d.) service requirements 
 that are drawn from a general distribution, with each job  also being equipped with
 an i.i.d.\ patience time  drawn from another general distribution.
 Depending on the application, the servers represent  processors, call center agents or enzymes, and
 the jobs represent packets, customers with tasks or proteins. 
 Arriving jobs enter service
immediately if there is an idle server available, else they
join the back  of the queue. As servers become available, jobs 
from the queue start service in the order of arrival.  Once a job completes service, it departs the system.
 In addition, jobs  also renege from the  queue   if the amount of time
they have been in queue exceeds their patience time.  Important system performance measures
of  interest include the stationary waiting time and queue distributions.
In the special case when arrivals are  Poisson and the service distribution is exponential,
but the abandonment distribution is general, 
explicit formulas for the scaled steady-state distributions were obtained in \cite{BacHeb81},
and their asymptotics as $N$, the number of servers,  goes to infinity,
were studied in \cite{ZelMan05}.
However, the case of general service distributions, which is relevant for many applications,
 is  more challenging and 
 it appears not feasible to obtain
 exact  analytical expressions for these quantities for general service and abandonment distributions.
 Instead, one often  resorts to
obtaining asymptotic approximations that are exact in the limit  as the number
of servers goes to infinity. 

In \cite{KanRam10} the state  of an $N$-server queue  at time $t$ 
is represented in terms of  two coupled measures, the queue measure and the server measure.
The queue measure encodes jobs currently in the queue and 
has a unit Dirac delta mass at the amount of time elapsed
since that job entered the system,  whereas 
the server measure $\nu^N$ keeps track of jobs currently in service 
and has a unit Dirac delta mass at the age of each such job,
where the age is the amount of time elapsed since the job entered service.  
For analytical purposes, 
it turns out that the queue measure itself is more conveniently represented in terms of a 
potential queue  measure $\eta^N$, which keeps track of the times elapsed since entry into the system of
all jobs (whether or not they have  abandoned or entered service), and not only 
of   jobs  currently in the queue, as well as the total number $X^N$ of jobs in system.
 Under fairly general conditions on the service and patience distributions (see
Assumption \ref{ass-main}), 
it was shown  in \cite{KanRam10} that when the average arrival rate or traffic intensity converges to $\lambda > 0$, 
 the rescaled state descriptor $(X^{(N)}, \nu^{(N)}, \eta^{(N)})$  
converges to a deterministic limit $(X,\nu,\eta)$,  where $\nu$ and $\eta$ are characterized as the unique weak solutions 
to a nonlinearly coupled system of  deterministic measure-valued transport equations, subject to discontinuous boundary conditions, 
which we refer to as the fluid equations (see Definition \ref{def-fleqns}).

In this  work we study the long-time behavior of the  solution $(X,\nu,\eta)$ to the measure-valued fluid equations
 obtained in \cite{KanRam10} under   Assumption \ref{ass-main} and 
the assumption that the fluid equations admit a unique 
invariant state (equivalently, fixed point). We 
study the subcritical, critical and supercritical regimes, characterized, respectively,
by the regions where  $\lambda < 1$, $\lambda=1$ and $\lambda > 1$, additionally assuming 
in the subcritial and supercritical regimes  that  
the hazard rate function of the service time distribution is either decreasing or bounded
away from zero and infinity. (Here, and in the sequel, we will say decreasing to mean
non-increasing.)  
 Our main results are summarized in Theorem \ref{th-main2}. 
Specifically, we show 
  that when $\lambda \neq 1$, from any initial condition, the solution to the fluid equations converges to the
unique invariant state in the large time limit
and when $\lambda = 1$ and the hazard rate function is decreasing then the total mass of $\nu$ (which represents the
mass of busy servers in the fluid system) converges to $1$.
In all cases above, we show that  the fluid equations have a unique invariant distribution
(or equivalently, unique {\it random} fixed point, to use a term introduced
later in this paper; see Definition \ref{def-RFP}).  This 
crucially implies  that the stationary distributions of the $N$-server
dynamics converge to the invariant state of the fluid dynamics,  the proof of which
was one of the motivations  of this work. 
 In particular, as elaborated in Remark \ref{rem-fix}, it is the uniqueness of
 the invariant distribution (or random fixed point) for the fluid dynamics, rather than just the uniqueness
 of an invariant state, that is relevant for the convergence of stationary distributions of the $N$-server
 dynamics.
 In the absence of reneging, such long-time convergence results 
were established for a single-class system in Proposition 6.1 of \cite{KasRam11}
for the subcritical regime and in Theorem 3.9 of  \cite{KasRam11} for 
 the critical regime, with the  latter requiring an additional finite second moment assumption.  
 In the presence of reneging, 
although the system is in a sense  more stable (e.g., the system is also stable in the supercritical regime, making it of particular interest), 
certain monotonicity properties are lost and the fluid equation
dynamics are considerably more complicated, making the analysis significantly more challenging.   

The proof of convergence in the subcritical regime
is  obtained via a direct analysis of the fluid equations (see Section \ref{subs-pf2subcrit}). 
The proofs in the critical and supercritical cases are considerably more subtle, and  
rely on  rather different arguments under the two sets of assumptions.
When the hazard rate function of the service distribution is decreasing, we use 
a reformulation in terms of renewal equations, in conjunction with certain recursive estimates, and
the convergence of the measure-valued state processes is with respect to the weak topology
(see Section \ref{subs-pf2supcrit}). These arguments are inspired by
those used in the work \cite{ZL}, which studies the long-time behavior of fluid equations
for the GI/G/N+G model under
the assumption that the service time distribution has a concave or convex renewal
function (which is implied by decreasing hazard rate functions).
However, the fluid equations of \cite{ZL} are based on a different
measure-valued state representation, involving residual service times of customers
rather than ages, and moreover, convergence is established in \cite{ZL}
for the queue process, not the measure-valued process.
Thus the results of \cite{ZL} do not directly apply.  Moreover, 
we also need to establish additional estimates to prove convergence of 
the measure-valued  process $\nu$ in the supercritical case.

The arguments used when the hazard rate function $h^s$ of the service distribution is bounded away from zero and infinity
are of a completely different nature. These results address a class of distributions not covered
by related results in the literature for many-server systems.
They are based on the analysis of weak solutions to partial differential equations, and entail showing that
an extended relative entropy functional (that takes in arguments that are not necessarily probability measures)
 serves as a Lyapunov functional for the dynamics. 
As a result, the convergence of the measure-valued state processes is with respect to the 
stronger total variation topology.
The analysis here  is somewhat reminiscent of the study of age-structured population models arising in  biology 
 (see, e.g.,  \cite{CanCarCua13,PerBook0,MicMisPer04}). 
Indeed, although the server measure $\nu$ need not have a density, and  in fact will typically not have a continuous density,    
a purely formal derivation (see Section \ref{subsub-formal}) shows that the density of $\nu$ satisfies a 
partial differential equation  (PDE) that is similar to 
age-structured population models.  However, our fluid equation differs 
crucially from such models in several  aspects that make it in some ways harder to analyze.
One issue,  as elaborated in Remark \ref{rem-pde} is that the service time hazard rate function,
which appears as a coefficient in the PDE, is not integrable on $[0,\infty)$, and thus the relative entropy method
  developed in \cite{MicMisPer04} does not apply.
 But, more significantly,  a key complicating factor is the fact that
 the boundary condition for $\nu$ is discontinuous
and assumes a different form depending on whether
the total mass of $\nu$ is less than or equal to, $1$.  In fact, a significant challenge in the analysis of the critical
and supercritical regimes is the  to control the oscillations of the total mass of $\nu$ below $1$.

Finally,  we also analyze the long-time behavior of  fluid equations for a multiclass model under a nonpreemptive priority policy,
which was formulated in  \cite{AtaKasShi13} and used therein to establish asymptotic optimality of the  policy when the reneging distribution is exponential (see Definition \ref{def-fluidmulti}). 
In the case that
service time distribution is  class-independent and satisfies the same conditions as above, 
 reneging times are exponential, but possibly class-dependent,
and the fluid equations
have a unique invariant state, 
we establish (in Theorem \ref{th2}) uniqueness of the random fixed points and
analogous long-time convergence results in the supercritical regime. 
It should be mentioned here that   in the queueing context, other  works that have studied long-time behavior of measure-valued fluid equations using Lyapunov functionals
  include  \cite{PagTanFerAnd12,FuWil21, PuhWil16}. All of these works  focus on the dynamics of residual times for jobs in 
  bandwidth sharing and processor sharing models, which have a different structure from the measure-valued equations arising
  from our fluid equations.

An interesting open problem for future investigation would be to determine  precisely
the  full class of service distributions for which such  
long-time convergence holds, and  also whether there is a unified proof for all cases,
at least in the  supercritical regime. 
In addition, in the critical regime, a more complete study of the convergence of the state process even  under the conditions imposed here would also be
of interest. 
Moreover, the techniques developed here may be potentially used to establish such convergence results for more general
many-server systems, including load-balancing systems with general service distributions, where the fluid limits are
described in terms of a system of coupled  measure-valued equations \cite{AghRam18} or partial
differential equations \cite{AghLiRam17}, and uniqueness of the fixed point holds under general conditions \cite{AgaRam20}.
For the multiclass model, it is of interest to investigate broader conditions,
such as class-dependent service distributions and less restrictive assumptions
on the hazard rate, under which convergence holds. This would also allow to treat
asymptotic optimality of the aforementioned index rule in broader settings.

 \subsection{Ramifications for stationary distributions of $N$-server queues}

   The results of this paper  also  shed insight into the  (law-of-large-numbers) scaled limit of  stationary
distributions of $N$-server queues for a much broader class of service distributions.  
More precisely, it follows from Theorem 3.2 and Theorem 7.1 of
\cite{KanRam12}  that  the measure-valued state dynamics $(X^{(N)}, \eta^{(N)}, \nu^{(N)})$ for each $N$-server system describe an ergodic
Feller process with a unique stationary distribution, whereas  Theorem 3.3 of \cite{KanRam12} shows 
that the sequence of scaled stationary distributions $\pi^{(N)}$ of the normalized state is tight. 
Moreover, the latter  theorem also states that 
 any subsequential limit of $\pi^{(N)}$ must coincide with the 
(deterministic) invariant state of the fluid equations, whenever the latter is unique. 
However, there is a gap in the proof of this statement in \cite{KanRam12}.
{\em A priori} one only knows that any subsequential limit of the scaled stationary distributions of $N$-server
queues with reneging is a 
random fixed point of the fluid equations (see Definition \ref{def-RFP}), and  not that it is necessarily
equal to a deterministic fixed point. 
However, as shown in Proposition \ref{cor-randomfp} of the present paper,
when there is convergence of the fluid equations to a unique invariant state  from any initial condition 
 (or, when $\lambda = 1$,  just convergence of $\eta_t$ and the fraction of busy server servers), 
it follows that the set of random fixed points is in fact
equal to the Dirac delta measure at the unique invariant state, thus closing the
gap in Theorem 3.3 of \cite{KanRam12}.

Our work in the multi-class setting also closes an exactly analogous gap
present in Theorem 4.4 of \cite{AtaKasShi13}. 
Indeed, one of the  auxiliary goals of this  work
is to (partially) fix the gaps in these proofs, under the additional assumptions on the service
distribution imposed herein (see Remark \ref{rem-fix} for further elaboration of this point).
In the case of \cite{AtaKasShi13}, the gap also affects the validity of Theorem~5.1
there, regarding the asymptotic optimality of an index policy,
referred to as the $c\mu/\theta$ rule, which was introduced in \cite{AGS10}.
The results obtained in this paper validate the asymptotic optimality result,
Theorem 5.1 of \cite{AtaKasShi13} under the additional assumption that the service time
distributions do not depend on the class. (Note, however, that there is no problem with
the validity of the asymptotic optimality results of the $c\mu/\theta$ rule stated in \cite{AGS10}
and \cite{AGS11}, which deal with the case of exponential service time distributions. 
Also, note recent developments  on this policy under various additional
settings in \cite{LSZZ}).   Finally, we note that 
limits of stationary distributions of many-server systems in the (so-called Halfin-Whitt) diffusive regime have  been considered in 
\cite{gam13,AghRam19,AghRam20} in the absence of abandonment, and in 
\cite{gar02,dai14,gam12} in the presence of abandonment.

 \subsection{Organization  of the rest of the Paper}
                   
In Section \ref{subs-fleq} we introduce the fluid equations in the single-class setting,
and in Section \ref{subs-invstates} define their invariant states.
In Section \ref{sec-main} we state our assumptions and  the main results,
and provide the proofs in Section \ref{sec-proofs}. 
Finally, in Section \ref{sec-multiclass} we introduce the multiclass fluid equations
and establish our convergence results in that setting. First, in Section \ref{subs-not}, we
introduce common notation that is used throughout the paper.

\subsection{Common Notation and Terminology}
\label{subs-not}

The following notation will be used throughout the paper.
 $\Z$ is the set of integers, $\N$ is the set of strictly positive integers, $\R$ is set of real numbers,
$\R_+$ the set of non-negative real numbers.
For $a, b \in \R$, $a \vee b$  denotes  the maximum of $a$ and $b$,
$a \wedge b$  the minimum of $a$ and $b$ and the short-hand $a^+$ is used for $a \vee 0$.
Also, given a set $A$, we will use $1_A$ to denote the indicator function, which is $1$ on
  $A$ and zero otherwise.

Given any metric space $E$, $\ecb$ and $\ecc$  are, respectively,
 the space of bounded,  continuous functions and
 the space of continuous real-valued functions with compact support defined on
$E$, while  $\eocd$ is the space of real-valued,
once  continuously differentiable functions on $E$, and  $\eocdc$
is the subspace of functions in $\eocd$ that have compact support.
The subspace of functions in $\eocd$ that, together with their
first derivatives, are bounded, will be denoted by $\eocdb$. For
$H\le\infty$, let 
 ${\mathcal L}^1[0,H)$ and $\lloc[0,H)$, respectively, represent the spaces of
integrable and locally integrable functions on $[0,H)$, where a locally integrable function $f$ on $[0,H)$
is a measurable function on $[0,H)$ that satisfies
$\int_{[0,a]}f(x)dx<\infty$ for all $a<H$.
Given any c\`{a}dl\`{a}g,
real-valued function $f$ defined on $[0,\infty)$, we define $\nrm{f}_T := \sup_{s \in [0,T]} |f (s)|$ for every $T < \infty$,
and let $\nrm{f}_\infty := \sup_{s \in [0,\infty)}
|f(s)|$, which could possibly take the value $\infty$.
In addition,
the support of a function $f$ is denoted by $\supp(f)$. Given a
nondecreasing function $f$ on $[0,\infty)$, $f^{-1}$ denotes the inverse
function of $f$, defined precisely as 
\be \label{inverse} f^{-1}(y)=\inf\{x\geq 0: f(x)\geq y\}. \ee
For each differentiable function $f$ defined on $\R$, $f'$ denotes the first derivative of $f$. For each function $f(t,x)$ defined on $\R \times \R^n$,
  we will use both $f_x$ and $\partial_x f$ to  denote the partial derivatives of $f$ with respect to $x$, and likewise,
both $f_t$ and $\partial_t f$ to denote the partial derivatives of $f$ with respect to $t$.
 We use $\f1$ to denote the function that is identically
  equal to $1$.
We will mostly be interested in the case when
 $E = [0,H)$ and  $E = [0,H) \times \R_+$, for some $H \in (0,\infty]$.
To distinguish these cases, we will usually use $\fsp$ to denote generic  functions
on $[0,H)$  and $\newf$ to denote generic 
 functions on $[0,H) \times \R_+$.  By some abuse of notation,
given $\fsp$ on $[0,H)$, we will sometimes
also treat it as a function on $[0,H) \times \R_+$ that is constant in the second
  variable.

We use  ${\mathcal P}(E)$ and $\eem$ to denote, respectively, 
the space of  Radon measures on a metric space $E$, endowed with the Borel $\sigma$-algebra, and let 
 $\eemf$ denote the subspace of finite measures in $\eem$, and
   ${\mathcal M}^c_F(E)$  the subspace of continuous measures  (i.e., measures that do not charge points) in $\eemf$.
The symbol $\delta_x$ will be used to denote the measure with unit mass at the point $x$ and,
with some abuse of notation, we will use $\zerof$ to denote the identically zero
Radon measure on $E$.  When $E$ is an interval, say $[0,H)$, for notational conciseness,
  we will often write ${\mathcal M}_F[0,H)$ or
    ${\mathcal M}_F^c[0,H)$ instead of ${\mathcal M}_F([0,H))$ or
          ${\mathcal M}_F^c([0,H))$,  respectively.
           For any Borel measurable function $\fsp: [0,H) \ra \R$ that is integrable
with respect to $\xi \in  \mm$, we  often use the short-hand notation
\[ \lan \fsp, \xi \ran := \int_{[0,H)} \fsp(x) \, \xi(dx),
  \]
    and likewise, 
  for any Borel measurable function $\newf: [0,H) \times [0,\infty) \ra \R$ and $t > 0$ such that 
  $x \mapsto \newf(\cdot, t)$ is integrable 
  with respect to $\xi \in  \mm$, we  often use the short-hand notation
  \[ \lan \newf(\cdot, t), \xi \ran :=  \int_{[0,H)} \newf (\cdot,t) d\xi = \int_{[0,H)} \newf(x,t) \, \xi(dx). 
      \]
    We also let  ${\mathcal P}(E)$ denote the space of probability measures on $E$, equipped with the Borel $\sigma$-algebra. 
          
          For any  measure $\mu \in {\mathcal M}_F[0,H)$, 
we define
\be
\label{def-cdfmu}
  F^\mu (x) := \mu[0,x],  \quad x \in [0,H), 
\ee
and we define  $(F^\mu)^{-1}$ to be its  right-continuous inverse:
\be
\label{def-Finverse}
(F^\mu)^{-1} (y) = \inf \{ x > 0: F^\mu(x) \geq y \}.
\ee
Also, given $\mu, \mu_t, t \in [0,\infty),$
       in ${\mathcal M}_F[0,H)$, 
       we will use the notation $\mu_t \Rightarrow \mu$ to denote weak 
       convergence: 
       \[  \lim_{t \rightarrow \infty} \langle \fsp, \mu_t \rangle = \langle \fsp, \mu\rangle,
       \quad \forall \fsp  \in {\mathcal C}_b [0,H).
         \]
         We will also on occasion use the total variation distance on ${\mathcal M}_F[0,H)$,
            denote by $d_{\text{TV}}(\mu,\nu) := 2\sup_{A \in {\mathcal F}} |\mu(A) - \nu(A)|$, where
            ${\mathcal F}$ is the Borel $\sigma$-algebra on $[0,H)$.

              Given a Polish space $\mathcal H$, Let $ {\mathcal D}_{\mathcal H}[0,\infty)$ denote the space of $\mathcal H$-valued, c\`{a}dl\`{a}g functions on $[0,\infty)$ and $\incspace$ denote the subset of non-decreasing functions
$f \in {\mathcal D}_{\R_+}[0,\infty)$ with $f(0) = 0$. Let $\calD^+_{\R^J}(\R_+)$ denote the subset of functions in  $\calD_{\R^J}(\R_+)$ 
                    that are nonnegative and nondecreasing componentwise.

\section{Fluid Equations and Random Fixed Points}
\beginsec

\subsection{Fluid Equations}
\label{subs-fleq}

We now describe the fluid  equations  introduced in 
\cite{KanRam12}. 
Let $G^s$ and $G^r$ denote the cumulative distribution functions of the
service time and patience
time distributions, respectively.
Throughout, we make the following standing assumptions on $G^s$ and $G^r$
 and  let   $\bar{G}^s =  1 - G^s$ and
$\bar{G}^r =  1 -  G^r$  denote the corresponding
 complementary cumulative distribution functions.
 Recall that we abbreviate lower semicontinuous as lsc.

\begin{assumption}
  \label{ass-main}
 The cumulative distribution functions $G^r$ and $G^s$ satisfy
  $G^r(0+) = G^s(0+) = 0$,
  and are both absolutely continuous on $[0,\infty)$ 
  with densities $g^r$ and $g^s$
  that satisfy the following properties: 
  \begin{enumerate}
    \item 
The mean patience and service times are finite: in particular, 
\be
\label{def-mean1}
\theta^r := \int_{[0,H^r)} x g^r(x) \, dx = \int_{[0,\infty)} \bar{G}^r(x) \, dx   <\infty,
\ee 
and, we normalize  units so that 
\be
\label{def-mean2}
\int_{[0,\infty)} x g^s(x) \, dx = \int_{[0,H^s)} \bar{G}^s(x) \, dx =  1,
\ee
where
\begin{eqnarray} 
  H^s  & := &  \sup\{x \in [0,\infty): G^s(x) <1 \},\\
  H^r & := &  \sup\{x \in [0,\infty): G^r(x) <1 \},
\end{eqnarray}
denote the right-end of the supports of the measures corresponding
to $G^s$ and $G^r$, respectively.
\item
There exists $\bar{H}^s<H^s$
such that $h^s:=g^s/\bar G^s$ is either bounded or lsc
on $(\bar{H}^s,H^s)$, and likewise, there exists $\bar{H}^r<H^r$ such that $h^r:=g^r/\bar G^r$ is either bounded or lsc on $(\bar{H}^r,H^r)$.
  \end{enumerate}
\end{assumption}

\begin{remark}\label{rem20}
{\em
Strictly speaking, $g^s$ and $g^r$ (and thus $h^s$ and $h^r$)
are determined only almost everywhere.
The convention implicitly adopted in the above statement is that
$h^s$ (respectively, $h^r$) is almost everywhere (a.e.)  equal to a function from $\R_+$ to itself
that is bounded or lsc.
}
\end{remark}

At any time $t \geq 0$, the state of the fluid system is represented by  a triplet 
$(\fx(t),\fmeas_t,\freneg_t)$, where  $\fx(t)$ represents the total mass of jobs in system at time $t$,
including those in queue and those in service, $\fmeas_t$ is the fluid age measure, which is a sub-probability
measure on $[0,H^s)$   that assigns to any interval $(a,b)$ the (limiting)  fraction   
  of servers  for whom the job currently in service has
  been in service for a number of time units lying in that interval,
  and  $\freneg_t$ is the fluid potential queue measure, which is a finite measure on $[0,H^r)$ that
    to any interval $(a,b) \subset [0,\infty)$ assigns the mass (or normalized limit) of jobs that have arrived
      by time $t$ and whose patience lies in that interval  (irrespective of whether or not
       they have entered service or departed the system by time $t$).
       Note that 
        the total fraction of busy servers at time
       $t$ is  $1  - \langle  \f1, \nu_t\rangle$, which is zero if $X(t) \geq 1$ and
        $1 - X(t)$, otherwise.  This is captured succinctly by the relation
        $1 - \langle  \f1, \nu_t\rangle = [1 - X(t)]^+$.
        
      The input data for the fluid equations includes the arrival rate $\lambda$,
        and the initial conditions, consisting of the total
   initial  mass in system,
    and     the initial (fluid) age and potential queue measures. 
 Then the space of possible initial conditions for the fluid equations is given by

\be
\label{def-newspace}
\newcspace:= \left\{\begin{array}{cc} & 
(\tilde{x}, \tilde{\nu}, \tilde{\eta}) \in  \R_+ \times \mmfs \times \mmfr: \\ &
1 - \lan \f1, \tilde{\nu} \ran = [1-\tilde{x}]^+ \end{array} \right\}.
\ee

We now give a precise formulation of the fluid equations introduced in \cite{KanRam12}  with $E(t)=\fe^\lambda (t) := \lambda t$ for $t \geq 0$ therein. 
These equations will also involve the  queue process 
 $Q(t)$, which  represents  the  total mass in queue (awaiting service) at time $t$,  and the non-decreasing processes 
$\fd(t), \fk(t)$, $\fs(t)$ and $\fr(t)$
represent,  respectively, the cumulative mass of departures from the queue,  entry into service, and respectively, the 
potentially reneged  and actually reneged jobs from the queue in the interval $[0,t]$.

\begin{definition} {\bf (Fluid Equations)}
\label{def-fleqns} {\em Given hazard rate functions $h^r$ and $h^s$, the c\`{a}dl\`{a}g function $(\fx,
\fmeas,\freneg)$ defined on $[0,\infty)$ and taking values in $\R_+ \times
\mmfs \times \mmfr$ is said to solve the  fluid equations  with arrival rate $\lambda\geq 0$ and initial condition 
$(\fx (0), \fmeas_0,\freneg_0) \in \newcspace$    if for every $t \in [0,\infty)$, we have} 
\be
\label{cond-radon}
S(t) :=  \int_0^t  \lan h^r, \freneg_s \ran \, ds  < \infty,\qquad  \fd(t) :=  \int_0^t  \lan h^s, \fmeas_s \ran \, ds  < \infty,
\ee
{\em and  the following relations are satisfied: for every $\newf \in \newocdcpms$, }
\begin{eqnarray}
\label{eq-ftmeas}
\ds \lan \newft, \fmeas_t \ran  & = & \ds \lan  \newf(\cdot,0), \fmeas_0 \ran +
\int_0^t \lan \dtnewfs +  \dxnewfs,  \fmeas_s \ran \, ds  \\
 & & \nonumber
\quad  - \ds \int_0^t \lan  h^s(\cdot) \newfs,  \fmeas_s \ran \, ds+ \int_0^t \newf (0,s) \, d\fk (s),
\end{eqnarray}
{\em where }  
\be
\label{eq-fk}
\fk(t) = \lan \f1, \fmeas_t \ran - \lan \f1, \fmeas_0 \ran + D(t); 
\ee
{\em for every} $\newf \in \newocdcpmr$, 
\begin{eqnarray}
 \label{eq-ftreneg}
\ds \lan \newf(\cdot,t), \freneg_t \ran  & = & \ds \lan  \newf(\cdot,0), \freneg_0 \ran +
\int_0^t \lan \newf_s(\cdot,s)  + \newf_x(\cdot,s), \freneg_s \ran \, ds \\ 
 & & \nonumber
\quad  - \ds \int_0^t \lan  h^r(\cdot) \newf(\cdot,s),  \freneg_s \ran \, ds+ \lambda \int_0^t \newf (0,s) \,
  ds;
\end{eqnarray}
{\em with the non-idling constraint }
\be
\label{eq-fnonidling}
 1 - \lan \f1, \fmeas_t \ran = [1 - \fx(t)]^+, 
 \ee
{\em where }
\begin{eqnarray}
\label{eq-fx}
\fx (t) &  = & \fx (0) + \lambda t -
D(t) - \fr(t), 
\end{eqnarray} 
 with 
 \begin{eqnarray}
   \label{fr}
   \fr(t) &=& \int_0^t \left(\int_0^{\fq(s)}h^r((\fnsf)^{-1}(y))dy\right) ds, 
 \end{eqnarray}
 {\em where  recall $\fntf(x) :=\freneg_t[0,x],$  and $(\fntf)^{-1}$ denotes the right-continuous
    inverse defined in \eqref{def-Finverse}, and } 
\begin{eqnarray}  
\label{fq}
\fq(t)& =& \fx(t) - \lan \f1, \fmeas_t \ran,
\end{eqnarray}
 with $\fq$ also satisfying the inequality constraint 
\begin{eqnarray}  
\label{fqfreneg} \fq(t)&\leq &\lan \f1, \freneg_t \ran. 
\end{eqnarray}
 \end{definition}

  \begin{remark}
    \label{rem-eta}
{\em Note that if $(X,\fmeas,\freneg)$ solves the fluid equations with  arrival rate $\lambda$, and initial condition 
		$(\fx (0), \fmeas_0,\freneg_0) \in \newcspace$,  then
we also have 
$(\fx(t), \fmeas_t, \freneg_t)  \in \newcspace$ for
every $t > 0$. 
It is also true that if $ \freneg_0 \in  \mmfcr$, then we also have
$ \freneg_t \in  \mmfcr$ for every $t > 0$
(this follows from the expression for $\eta_t$ in \eqref{f4} below, from which
it is clear that if $\eta_0$ does not charge points then neither does $\eta_t$).
}
   \end{remark}

Also, note from (\ref{fq}) and (\ref{eq-fnonidling}) that for each $t\in [0,\infty)$, \be \label{fqfx}\fq(t)=[\fx(t)-1]^+. \ee For future use, we also observe that (\ref{eq-fk}), (\ref{fq}) and (\ref{eq-fx}), when combined, show that for every $t\in [0,\infty)$,
    \be \label{qt-conserve}  \fq(0)+\lambda t=\fq(t)+\fk(t)+\fr(t), \ee
    which is simply a  mass conservation equation. 
In addition, we will find it convenient to define
\be
\label{def-q}
\fb(t) := \langle \f1, \nu_t \rangle,  \quad t \geq 0, 
\ee
which represents the limiting fraction of busy servers.

\begin{remark}
  \label{rem-aux}
{\em Given a solution $(X,\nu,\eta)$, we will refer to $(D, K, R, S, Q, B)$ as auxiliary processes.}	
	
\end{remark}

We now  provide an informal, intuitive explanation for the form of the fluid
equations. Note that $\fmeas_s(dx)$ represents the amount of mass (or 
fraction of servers) that are processing jobs whose ages lie in the range $[x,x+dx)$ at time $s$, and
  $h^s(x)$ represents the conditional mean rate at which the mass of jobs with age in $[x,x+dx)$ 
  completes service  at time $s$.    Hence,  in \eqref{cond-radon}, $\lan h^s,\fmeas_s \ran$  represents the 
    departure rate of mass from the fluid system due to services at time $s$, and
  its integral, $D(t)$, is the  cumulative departure rate due to service completion in the  interval $[0,t]$. 
  By an exactly analogous reasoning, the other quantity
  $S(t) =  \int_0^t \lan h^r,\freneg_s \ran ds$ in \eqref{cond-radon} represents the cumulative
  potential reneging from the system in the interval $[0,t]$.
  However, the actual reneging rate is restricted to abandonments of those in queue.  
  Since entries into the queue take place in the order of  arrival, the age of the oldest 
  (equivalently, head-of-the line) mass in the fluid queue is $\bar{a}_s := (F^{\eta_s})^{-1}(Q(s))$, so  that
  $\eta_s[0,\bar{a}_s] = Q(s)$.  Here, recall  $F^{\eta_s}$ represents the cumulative
  distribution function of $\eta_s$. Thus, the actual reneging rate 
  at any time $s$ only counts the  mass reneging from the potential queue measure $\eta_s$ whose age lies in the
  restricted interval $[0,\bar{a}_s]$,
  rather than the entire interval $[0,\infty)$. A standard change of variable then yields the
    expression in  \eqref{fr}. 
       Next, recalling the interpretations of the  quantities  $K$, $R$ and $Q$
  stated prior to Definition \ref{def-fleqns},  
  note that equations (\ref{eq-fk}), (\ref{fq}) and  (\ref{eq-fx}) are simply
  mass conservation equations, and \eqref{eq-fnonidling} represents a  
  non-idling condition that ensures that no server can idle when there is work in the queue. 
  Moreover, the inequality (\ref{fqfreneg})
expresses the  constraint that  at any time $t$, the  fluid queue is bounded by
the total mass of the fluid potential queue measure, since the
latter  also includes mass that may have already gone into service (and possibly also departed the system) by that time, provided its 
patience  time exceeds the total time elapsed since arrival.     
  Finally, equations (\ref{eq-ftmeas})
and (\ref{eq-ftreneg})  govern the evolution of the fluid age measure
$\fmeas$ and potential queue measure $\freneg$, respectively.   In particular,  
 the second term on the right-hand-side of (\ref{eq-ftmeas}) represents the change in
 $\langle \varphi, \fmeas\rangle$ over the interval $[0,t]$ due to transport
 or shift of the ages at unit rate to the right,
 the third term accounts for changes due to departure of mass from the system due to service, and the last term
 captures changes due to new  entry into  system, which are driven by the function $K$,
   the cumulative entry into service.
 The equation  \eqref{eq-ftreneg} is exactly analogous,
 but  with $h^r$  and  the  cumulative arrivals $E^\lambda$ into the  system in place of $h^s$
 and $K$, respectively, and the third term on the right-hand side now representing departure of
 mass from the system due to potential reneging.

 We now state a result that was proved in   \cite{KanRam10,KasRam11}.
 Recall the definition of the
 space  $\newcspace$ given in 
 \eqref{def-newspace}. 
   
 \begin{theorem}
   \label{th-fluid}
   Suppose Assumption \ref{ass-main} holds and fix $\lambda \geq 0$, and
    $(\fx (0), \fmeas_0,\freneg_0) \in \newcspace$.
    Then there is at most one solution 
     to the fluid equations with  arrival rate $\lambda$ and  initial condition
     $(\fx (0), \fmeas_0,\freneg_0)$, and if $\freneg_0 \in  \mmfcr$, then
     there also  exists
     a  continuous solution   $(X,\fmeas,\freneg) = \{(\fx(t), \fmeas_t, \freneg_t), t \geq 0\}$ with  arrival rate $\lambda$ and initial condition     $(\fx (0), \fmeas_0,\freneg_0)$. 
     Moreover, given any solution  $(X,\fmeas,\freneg) = \{(\fx(t), \fmeas_t, \freneg_t), t \geq 0\}$ to the
     fluid equations associated with  $\lambda$ and  
     $(\fx (0), \fmeas_0,\freneg_0) \in \newcspace$, the following properties hold: \\
(i)   for any bounded or nonnegative measurable function $\fsp$ on $[0,\infty)$ and for  $\fsp = h^r$, 
    for every $t \geq 0$, 
\begin{equation}
	\int_{[0,H^r)} \fsp (x) \eta_t (d x) = \int_{[0,H^r)} \fsp (x+t)\frac{\bar{G^r}(x+t)}{\bar{G^r}(x)}\eta_0(dx)+
	\int_0^t \fsp (s) \bar{G^r}(s) \la ds; \label{f4}
\end{equation}
(ii) for  any bounded or nonnegative measurable function $\fsp$ on $[0,\infty)$ and for $\fsp = h^s$, 
    for every $t \geq 0$, 
\begin{equation}
	\int_{[0,H^s)} \fsp (x) \nu_t (d x) = \int_{[0,H^s)} \fsp (x+t)\frac{\bar{G^s}(x+t)}{\bar{G^s}(x)}\nu_0(dx)+
	\int_0^t \fsp (t-s) \bar{G^s}(t-s) d K(s), \label{f5}
\end{equation}
       with $\fk$ equal to the auxiliary process defined in \eqref{eq-fk}
       of the fluid equations; \\
       (iii) if $Q$ and $B$ are the associated auxiliary processes 
         defined in \eqref{fq} and \eqref{def-q} respectively, 
   then $\fk$ is an absolutely continuous function and for a.e.\ $t \geq 0$,
   the derivative $\fk^\prime$ of $\fk$ satisfies 
   \begin{equation}
     \label{fkprime}
     \fk^\prime (t) = k(t):= \left\{
     \begin{array}{rl}
       \lambda & \mbox{ if }  B(t)  < 1,    \\
       \lambda  = \langle h^s, \nu_t \rangle  & \mbox{ if  } B(t) = 1 \mbox{ and } \fq(t) = 0, \\
       \langle h^s, \nu_t \rangle  &  \mbox{ if  }  B(t) = 1   \mbox{ and } \fq(t) > 0.  
     \end{array}
     \right.
   \end{equation}
  \end{theorem}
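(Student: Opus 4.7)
My strategy is to first derive the mild (Duhamel) representations (i) and (ii) from the weak formulations \eqref{eq-ftreneg} and \eqref{eq-ftmeas} using test functions adapted to the characteristics, then exploit the fact that the $\eta$-equation decouples to obtain uniqueness and existence of $(X,\nu)$ by a fixed-point argument, and finally read off (iii) by differentiating the mass balance \eqref{qt-conserve} in the three regimes carved out by the non-idling constraint \eqref{eq-fnonidling}.

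For (i), fix $T>0$ and a smooth, compactly supported $\psi$ on $[0,H^r)$, and choose the test function
\[ \varphi(x,t) = \psi(x+T-t)\,\frac{\bar G^r(x+T-t)}{\bar G^r(x)}, \qquad 0\le t\le T, \]
suitably truncated in $t$ so as to lie in $\newocdcpmr$. A direct computation yields $\varphi_t+\varphi_x-h^r\varphi\equiv 0$, so the two middle integrals in \eqref{eq-ftreneg} cancel, and evaluating $\varphi(\cdot,T)=\psi$, $\varphi(x,0)=\psi(x+T)\bar G^r(x+T)/\bar G^r(x)$, and $\varphi(0,s)=\psi(T-s)\bar G^r(T-s)$ gives \eqref{f4} for such $\psi$ at $t=T$. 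A monotone class argument then extends \eqref{f4} to bounded or nonnegative measurable $\psi$ and to $\psi=h^r$, the latter using the a priori finiteness in \eqref{cond-radon}. The derivation of \eqref{f5} in (ii) is completely parallel, with the pair $(h^r,\lambda\,ds)$ replaced by $(h^s,dK(s))$.

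For uniqueness, note that the right-hand side of \eqref{f4} depends only on $\eta_0$ and $\lambda$, so $\eta$ is pinned down. Given $\eta$, relation \eqref{fr} expresses $R'$ as a Lipschitz functional of $Q$ and $\eta_t$, and \eqref{f5} together with \eqref{eq-fk}, \eqref{fq}, \eqref{eq-fx}, and \eqref{eq-fnonidling} becomes a closed fixed-point equation for $(\nu,X,K)$; Gronwall's inequality applied to the difference of two candidate solutions then yields uniqueness on every bounded time interval. For existence under the extra hypothesis $\eta_0\in\mmfcr$, the continuity of $F^{\eta_s}$ (preserved by \eqref{f4}, cf.\ Remark \ref{rem-eta}) makes the right-continuous inverse in \eqref{fr} well-behaved, and one may construct the solution by Picard iteration on the same fixed-point system, or alternatively by passing to the fluid limit of the $N$-server prelimit as in \cite{KanRam10}.

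Finally, for (iii), on the open set $\{B<1\}$, \eqref{eq-fnonidling} gives $X<1$, so $Q=0$ and $R'\equiv 0$ by \eqref{fr}; combining \eqref{eq-fx} with \eqref{eq-fk} then yields $K'=\lambda$. On the open set $\{B=1,\,Q>0\}$, the constraint $\lan\f1,\nu_t\ran\equiv 1$ forces $K'=D'=\lan h^s,\nu_t\ran$ via \eqref{eq-fk}. On the boundary set $\{B=1,\,Q=0\}$ the two prescriptions must coincide, giving the consistency relation $\lambda=\lan h^s,\nu_t\ran$. The main obstacle I expect is that the boundary condition for $\nu$ at $x=0$ is discontinuous across these regimes, so establishing absolute continuity of $K$ in the first place, and ruling out pathological switching behavior at transition times, requires first showing that $t\mapsto B(t)$ is continuous and then carefully invoking the non-idling constraint together with \eqref{qt-conserve}; this is the technical heart of the argument in \cite{KanRam10,KasRam11}.
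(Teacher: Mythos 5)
Your derivation of (i)--(ii) via test functions transported along characteristics is sound and is essentially the computation underlying Theorem 4.1 of \cite{KasRam11}, which the paper simply cites (after checking that \eqref{cond-radon} supplies the needed integrability); the monotone-class extension to bounded/nonnegative $\fsp$ and to $\fsp=h^r,h^s$ matches the paper. The genuine gap is in your uniqueness argument. The paper does not prove uniqueness by Gronwall; it invokes Theorem 3.5 of \cite{KanRam10}, which is a substantial result precisely because the system is \emph{not} Lipschitz under Assumption \ref{ass-main}: the hazard rates $h^s$ and $h^r$ are only required to be locally bounded/lsc and may be unbounded near $H^s$, $H^r$, so neither $\nu\mapsto\lan h^s,\nu_t\ran$ nor $Q\mapsto\int_0^{Q}h^r((\fnsf)^{-1}(y))dy$ is a Lipschitz functional, and your claimed closed fixed-point equation with a Gronwall bound does not go through as stated. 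In addition, the interaction of the discontinuous boundary input $K$ with the non-idling constraint \eqref{eq-fnonidling} is exactly what makes the contraction argument delicate; asserting it in one sentence leaves the main difficulty unaddressed (for existence you do offer the legitimate fallback of citing the fluid limit of \cite{KanRam10}, which is what the paper does, after verifying its Assumptions 3.1--3.3).

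For (iii), two points. First, the absolute continuity you defer as ``the technical heart'' is in fact immediate: $D$, $S$ and $R$ are absolutely continuous by their very definitions \eqref{cond-radon} and \eqref{fr}, hence $X$ is absolutely continuous by \eqref{eq-fx}, hence $B=X\wedge 1$ and then $K$ are absolutely continuous by \eqref{eq-fk} and \eqref{fq}; no continuity of $B$ needs to be established separately. Second, your treatment of the regime $\{B=1,\ Q=0\}$ is incomplete: this set is not open (unlike $\{B<1\}$ and $\{Q>0\}$), so one cannot differentiate identities along intervals, and ``the two prescriptions must coincide'' is the conclusion to be proved, not an argument. The paper's proof uses the standard fact that an absolutely continuous function has zero derivative a.e.\ on any set where it is constant: applying this to $B$ on $\{B=1\}$ gives $K'=\lan h^s,\nu_t\ran$ a.e.\ there, and applying it to $Q$ on $\{Q=0\}$ gives $K'=\lambda$ a.e.\ there, whence $\lambda=\lan h^s,\nu_t\ran$ for a.e.\ $t$ in the intersection. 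Your handling of the other two regimes is consistent with the paper once this measure-theoretic ingredient is supplied.
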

 \begin{proof} 
   Uniqueness of the solution to the fluid equations follows
   from Theorem 3.5 of \cite{KanRam10}   since  $(\fx (0), \fmeas_0,\freneg_0) \in \newcspace$
     implies  $(\fe^\lambda, \fx(0),\fmeas_0, \freneg_0)$ lies in the space $\newspace$ therein, where recall $\fe^\lambda(t) = \lambda t$. 
   Likewise, existence  of a solution with  arrival rate $\lambda$ and  initial condition  $( \fx(0),\fmeas_0, \freneg_0) \in \newcspace$ with $ \freneg_0 \in  \mmfcr$
   can be deduced from Theorem 3.6 of \cite{KanRam10}, once we justify that 
  the conditions of that theorem 
   are satisfied in the present setting. 
   First, it is
   not hard to see that for any arrival rate $\lambda\geq 0$ and initial condition $( \fx(0),\fmeas_0, \freneg_0) \in \newcspace$ with $ \freneg_0 \in  \mmfcr$   one can construct
   a sequence of $N$-server systems with Poisson $(N\lambda)$ arrival process $E^N$ and initial condition $(\fx^N(0), \fmeasn_0, \frenegn_0)$ such that 
   Assumption 3.1  of \cite{KanRam10}  is satisfied. 
   Second,  note that since $\freneg_0$ is
   a continuous measure, and   $E^\lambda$ is continuous,  
    Assumption 3.2 of \cite{KanRam10} is also satisfied.
    Finally, Assumption 3.3 of \cite{KanRam10} is a direct consequence of
    Assumption \ref{ass-main} of this paper, and thus 
  the application  of Theorem 3.6 of \cite{KanRam10} is justified.

We now turn to estabishing the properties  of any solution $(X,\fmeas,\freneg)$ to the fluid equations. 
   First,  note that the forms of both \eqref{eq-ftreneg} and \eqref{eq-ftmeas} are analogous to that of 
   (4.2) in \cite{KasRam11}, and therefore the integrability conditions in \eqref{cond-radon} imply 
   that   (4.1) of \cite{KasRam11} holds.  
   Thus, \eqref{f4} and  \eqref{f5} for $\fsp  \in  {\mathcal C}_c[0,H^r)$ and
     $\fsp \in {\mathcal C}_c[0,H^s)$, respectively, 
   follow from Theorem 4.1 of \cite{KasRam11}.   By using a standard approximation argument,
     namely  representing indicators of finite open intervals in $\R_+$ as
       monotone limits of continuous
       functions with compact support and appealing to the monotone class theorem, it follows that both equations
       in fact hold for any bounded measurable or nonnegative measurable $\fsp$.
       In particular, these equations also hold with $\fsp = h^r$ in \eqref{f4} and  $\fsp = h^s$ in \eqref{f5}.
       The latter fact is used several times in this paper. 
   
We now turn to the  proof of property (iii),  which  is similar in spirit to  (3.12) of
\cite{KasRam11}  and   Corollary 3.7 of \cite{KanRam10}. We supply the details for completeness.  
First,  note that $D,  S$ and $R$ are absolutely continuous by definition. Thus,
\eqref{eq-fx} shows that $X$ is absolutely continuous, which then implies that 
$B(t)  = \langle \f1, \nu_t\rangle   = \min (X(t), 1)$ is also absolutely continuous. In turn, by \eqref{eq-fk} and \eqref{fq},
this implies that 
$K$ is absolutely continuous. Further, \eqref{eq-fk}, (\ref{qt-conserve}), \eqref{cond-radon} and \eqref{def-q} show that for a.e.\ $t > 0$,  
\begin{equation}
  \label{kprime-temp}
  K'(t)
  = \lambda  - Q'(t) - \int_0^{Q(t)} h^r((F^{\eta_t})^{-1}(y) ) dy, \quad \mbox{ and }  \quad
K'(t) = B^\prime (t) + \langle h, \nu_t \rangle. 
\end{equation} 
We now recall the following standard fact that given  an absolutely continuous function  and a set $A$ on which it is constant,
the derivative of the function is zero for almost every $t \in A$.
Thus, for almost every $t$ in the set where 
$B$ is constant, we have 
$B^\prime(t) = 0$, and \eqref{kprime-temp} implies $K'(t) = \langle h, \nu_t \rangle$.
On the other hand, for almost every $t$  when $Q(t) = 0$, it follows that  $Q^\prime(t) = 0$ and hence,
by \eqref{kprime-temp} that $K'(t)  = \lambda$.  Thus, when both $B(t) = 1$ and $Q(t) = 0$, 
$K^\prime(t) = \lambda = \langle h, \nu_t \rangle$. 
The remaining claims in \eqref{fkprime} then follow  from the observations that when 
$B(t) < 1$, one has $Q(t) = 0$  and when $Q(t) > 0$, one has  $B(t)$  equal to the constant one,
both of which are easily deduced from  \eqref{eq-fnonidling} and \eqref{fq}. 
\end{proof}

 We now state  a simple result on the action of time-shifts on solutions to the fluid equations. 
  To state the result, which 
  was formulated as  Lemma 3.4 of \cite{KanRam12}, 
  we will need the following notation: for any $t\in [0,\infty)$, define
    \[
    \begin{array}{rclrclrclr}
    \fk^{[t]} &:=& \fk(t+\cdot)-\fk(t), \qquad \qquad \fx^{[t]} &:=& \fx(t+\cdot), \qquad \qquad \fmeas^{[t]} &:= &\fmeas_{t+\cdot}, \\
    \fr^{[t]} &:=& \fr(t+\cdot)-\fr(t),  \qquad \qquad \quad \freneg^{[t]} &:= &\freneg_{t+\cdot}, \qquad \qquad \quad \fq^{[t]} &:=& \fq(t+\cdot). 
    \end{array}
    \]

\begin{lemma}[Lemma 3.4 of \cite{KanRam12}] \label{lem:shift}
  Suppose Assumption \ref{ass-main} holds. 
  Suppose  $(\fx,\fmeas,\freneg) = \{(\fx(s), \fmeas_s, \freneg_s), s \geq 0\} \in  {\mathcal D}_{\newcspace}[0,\infty)$ 
    solves the  fluid equations with arrival rate $\lambda$ and initial condition 
    $(\fx (0), \fmeas_0,\freneg_0) \in \newcspace$, then for any $t > 0$,
    $(\fx^{[t]}, \fmeas^{[t]},\freneg^{[t]})$ solves the fluid equations with arrival rate $\lambda$ and initial condition 
    $(\fx (t), \fmeas_t,\freneg_t) \in \newcspace$,  but with $\fk, \fr$ and $\fq$ replaced with
    $\fk^{[t]}, \fr^{[t]}$ and $\fq^{[t]}$, respectively. 
\end{lemma}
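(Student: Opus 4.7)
The plan is to verify, one by one, that every clause in Definition~\ref{def-fleqns} holds for the shifted triple $(X^{[t]},\nu^{[t]},\eta^{[t]})$ with initial condition $(X(t),\nu_t,\eta_t)$ and auxiliary processes $K^{[t]},R^{[t]},Q^{[t]}$. The first observation is that $(X(t),\nu_t,\eta_t)\in\newcspace$ by Remark~\ref{rem-eta}, so the shifted initial condition is admissible. The integrability conditions in \eqref{cond-radon} for the shifted process are immediate from writing $\int_0^s\lan h^r,\eta_{t+r}\ran\,dr = S(t+s)-S(t)$ and $\int_0^s\lan h^s,\nu_{t+r}\ran\,dr=D(t+s)-D(t)$, each of which is finite by the corresponding condition for the original solution.

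The core of the argument is the verification of the two transport equations \eqref{eq-ftmeas} and \eqref{eq-ftreneg}. I will handle \eqref{eq-ftmeas}; the other is identical in form. Given $\varphi\in\newocdcpms$, the goal is to produce
\[
\lan\varphi(\cdot,s),\nu_{t+s}\ran
=\lan\varphi(\cdot,0),\nu_t\ran
+\int_0^s\!\lan\varphi_r+\varphi_x,\nu_{t+r}\ran\,dr
-\int_0^s\!\lan h^s\varphi,\nu_{t+r}\ran\,dr
+\int_0^s\!\varphi(0,r)\,dK^{[t]}(r).
\]
To reduce this to the original fluid equation, I extend $\varphi$ to a test function $\psi\in\newocdcpms$ satisfying $\psi(x,r)=\varphi(x,r-t)$ for all $r\ge t$ (which can be arranged by any smooth compactly supported extension on $[0,t)$ followed by truncation). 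Applying \eqref{eq-ftmeas} with test function $\psi$ at times $t+s$ and $t$ and subtracting kills every contribution from $[0,t]$; the values of $\psi$ on $[0,t)$ therefore drop out of the difference. A change of variable $r\mapsto r-t$ in the remaining integrals over $[t,t+s]$ and the identity $dK(t+\cdot)=dK^{[t]}(\cdot)$ yield exactly the desired identity. The analogous argument for \eqref{eq-ftreneg} uses the fact that the input term $\lambda\int_0^{t+s}\psi(0,r)\,dr$ splits cleanly into $\lambda\int_0^t$ and $\lambda\int_t^{t+s}$, so no redefinition of the arrival rate is needed.

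It remains to check the algebraic identities and constraints. Equation \eqref{eq-fk} for the shifted process reads $K^{[t]}(s)=\lan\f1,\nu_{t+s}\ran-\lan\f1,\nu_t\ran+D^{[t]}(s)$ with $D^{[t]}(s):=\int_0^s\lan h^s,\nu_{t+r}\ran\,dr=D(t+s)-D(t)$, which follows by subtracting \eqref{eq-fk} for the original solution at $t+s$ and $t$. Equation \eqref{eq-fx} rearranges similarly: $X^{[t]}(s)=X(t)+\lambda s-D^{[t]}(s)-R^{[t]}(s)$. The non-idling constraint \eqref{eq-fnonidling}, the relation \eqref{fq}, and the bound \eqref{fqfreneg} hold for $(X^{[t]},\nu^{[t]},\eta^{[t]},Q^{[t]})$ pointwise simply because they hold for $(X,\nu,\eta,Q)$ at every time $t+s$ and are purely algebraic.

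The one step that deserves genuine care, and which I expect to be the most error-prone, is the identification of $R^{[t]}$ with the reneging expression \eqref{fr} \emph{built from the shifted data}. By definition,
\[
R^{[t]}(s)=R(t+s)-R(t)=\int_t^{t+s}\!\!\int_0^{Q(u)}\!h^r\bigl((F^{\eta_u})^{-1}(y)\bigr)\,dy\,du
=\int_0^s\!\!\int_0^{Q^{[t]}(r)}\!h^r\bigl((F^{\eta^{[t]}_r})^{-1}(y)\bigr)\,dy\,dr,
\]
so the change of variable $u=t+r$ produces \eqref{fr} written in terms of the shifted measures, since $\eta^{[t]}_r=\eta_{t+r}$ and hence $F^{\eta^{[t]}_r}=F^{\eta_{t+r}}$. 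Putting all pieces together verifies each defining relation of Definition~\ref{def-fleqns} for the shifted triple and completes the proof.
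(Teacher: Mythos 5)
Your verification is correct and follows exactly the route the paper indicates (it leaves the proof to the reader, noting it "can be verified by just rewriting the fluid equations"): subtracting the defining relations at times $t+s$ and $t$, shifting test functions in time, and checking the algebraic constraints pointwise, with the change of variables in \eqref{fr} handled properly. The only cosmetic difference is that the paper also mentions invoking the uniqueness of Theorem \ref{th-fluid}, which is not actually needed for the statement as given and which you rightly omit.
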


As in \cite{KanRam12}, we leave the proof to the reader, since it 
can be verified by just rewriting the fluid equations and invoking the
uniqueness result stated in Theorem \ref{th-fluid}.

 \subsection{Invariant States and Random Fixed Points of the Fluid Equations}
 \label{subs-invstates}

Let $\fmeass$ and $\frenegs$ be  Borel probability measures on $[0,\infty)$ defined as
follows:
\begin{eqnarray}
\label{def-invmeas}
\fmeass [0,x) & := & \int_0^x \bar{G}^s(y) \, dy, \qquad x \in [0,H^s), \\
\label{def-invrenegs}
\frenegs [0,x) & := & \int_0^x \bar{G}^r(y) \, dy, \qquad x \in [0,H^r).
\end{eqnarray}
Note that $\fmeass$ and $\frenegs$ are well defined due to Assumption \ref{ass-main}.
For $\lambda \geq 1$, define the set $\fplambda_\lambda$ as follows:
\be
\label{eq-fp}
 \fplambda_\lambda := \left\{x \in [1,\infty): G^r \left( \left(F^{\lambda \frenegs}
    \right)^{-1} \left((x-1)^+\right) \right) = \frac{\lambda - 1}{\lambda} \right\}, 
\ee
and let \[ x_l^\lambda:= \inf\left\{x\in [1,\infty):\ x\in \fplambda_\lambda\right\} \qquad \mbox{and} \qquad
  x_r^\lambda:= \sup\left\{x\in [1,\infty):\ x\in  \fplambda_\lambda \right\}.\]
By \eqref{def-invrenegs}, the map $x\to\eta_*[0,x)$ is strictly increasing on $[0,H^r)$, and therefore
$(F^{\lambda \frenegs})^{-1}$ is continuous. Since $G^r$ is also continuous,
we have $\fplambda_\lambda = [x_l^\lambda,x_r^\lambda]$ is non-empty. 
Let ${\mathcal I}_{\lambda}$ be the invariant manifold for the fluid equations, defined by
\begin{equation}
\label{eq-invman}
{\mathcal I}_{\lambda} :=
\left\{
\begin{array}{ll}
\left\{ (\lambda, \lambda \fmeass, \lambda \frenegs) \right\} &
\mbox{ if } \lambda < 1, \\
\left\{ (x_*, \fmeass, \lambda \frenegs): x_* \in  \fplambda_\lambda
\right\}&
\mbox{ if } \lambda \geq 1.
 \end{array}
 \right.
\end{equation}
 Our study of  the critical and super-critical regimes will be carried out under the 
  following additional assumption on the  invariant manifold ${\mathcal I}_{\lambda}$.

\begin{assumption}\label{ass-unique}
The set ${\mathcal I}_{\lambda}$ has a single element,  which we express as $z_*^\lambda = (x_*^\lambda, (\lambda \wedge 1)\fmeass,\lambda \frenegs)$, where $x_*^\lambda =\lambda$ when $\lambda < 1$ and $x_*^\lambda$ is the unique element of $\fplambda_\lambda$ when $\lambda \geq 1$.
\end{assumption}

\noi Note that Assumption \ref{ass-unique} imposes a non-trivial restriction only when $\lambda \geq 1$. 
As stated in Lemma 3.1 of \cite{KanRam12}, a sufficient condition for
Assumption \ref{ass-unique} to hold when $\lambda > 1$ is for the equation
$G^r(x) = (\lambda - 1)/\lambda$ to have a unique solution. 
 
 Whereas Assumption \ref{ass-unique} guarantees
  a unique deterministic fixed point for the fluid equations, to understand
  the large-time limits of the fluid equations, it turns out to be important to also understand the collection of 
  random fixed points,  defined below. 
We first introduce the notion of a solution to the fluid
equations when the input data is random. 

\begin{definition} 
  \label{def-fleqnsri}   Given any $\newcspace$-valued 
    random element 
    $(\fx (0), \fmeas_0,\freneg_0)$ defined on some probability space
    $(\Omega, {\mathcal F}, \PP)$,  we say the c\`{a}dl\`{a}g $\newcspace$-valued 
stochastic process   $(\fx, \fmeas, \freneg) = \{(\fx(t),\fmeas_t,\freneg_t), t \geq  0\}$ is a solution to the fluid equations with arrival rate $\lambda$ and random initial condition
$(\fx (0), \fmeas_0,\freneg_0)$  if for each $\omega \in \Omega$, the  function 
$(X(\omega),\fmeas(\omega),\freneg(\omega)) = \{(X(t,\omega), \fmeas_t(\omega), \freneg_t(\omega)), t \geq 0\}$
solves the fluid equations with arrival rate $\lambda$ and initial condition  $( \fx (0,\omega), \fmeas_0(\omega),\freneg_0(\omega))$.   
\end{definition}

\begin{definition} 
  \label{def-RFP}  
       For $\lambda > 0$, a probability measure  $\mu$ on $\newcspace$ 
              is said to be a random fixed point of the fluid equations  with arrival rate $\lambda$
              if  given any  $\newcspace$-valued random element 
              $(\tilde X,\tilde \nu, \tilde \eta)$  whose law is $\mu$,
            there exists a solution 
          $(X, \fmeas, \freneg)$  to the fluid equations with arrival rate $\lambda$ and random initial condition 
          $(\tilde X,\tilde \nu, \tilde \eta)$  such that  
          for each $t\geq 0$, the law of  $(\fx(t),\fmeas_t, \freneg_t)$ is equal to  $\mu$.           
\end{definition}

\begin{remark}
  \label{rem-fp}
  {\em  Under our assumptions, a random fixed point always exists. 
    Indeed, it follows from Theorem 5.5  of \cite{KanRam12} that the set ${\mathcal I}_{\lambda}$ in \eqref{eq-invman} 
    describes the so-called
    invariant manifold (or collection of deterministic fixed points) of the fluid equations.
    Since for $\lambda \geq 0$, $\fplambda_\lambda$ is always non-empty, 
  an immediate consequence is that for any $z \in {\mathcal I}_{\lambda}$, the measure $\delta_z$ is a
  random fixed point of the fluid equations with arrival rate $\lambda$.  Moreover, 
  under Assumption \ref{ass-unique}, $\delta_{z^\lambda_*}$ is the only  random fixed point
  that is degenerate (i.e., which concentrates all its mass on one point).  A key question
  we  address in this article is to determine  conditions  under which this is in fact the only  random fixed point with
  arrival rate $\lambda$. 
  As shown in Proposition \ref{cor-randomfp} below,   a sufficient
  condition for this to hold is that any solution $(X,\nu,\eta)$ to the fluid equations with arrival rate $\lambda$ and initial condition $(X(0), \nu_0, \eta_0) \in \newcspace$ and auxiliary process $B = \langle \f1, \nu\rangle$ satisfies $\eta_t \Rightarrow \lambda \frenegs$ and
   $B_t \rightarrow  \lambda \wedge 1$,  as
  $t \rightarrow \infty$. 
 }
\end{remark}

\section{Assumptions and Main Results}
\label{sec-main}
\beginsec

We now state our main results, which require the following additional
condition on the
service distribution. 

\begin{assumption}
  \label{ass:a}
  The cumulative distribution function $G^s$ of the service distribution has a density $g^s$ and the
  hazard rate function $h^s = g^s/\bar{G}^s$ satisfies  one of the following: 
  \begin{enumerate}
  \item
    \label{ass:a1}
 The quantities $\eps_h :=\essinf_{x\geq 0}h^s(x)>0$ and $c_h :=\esssup_{x\geq 0}h^s(x)<\infty$. 
\item
  \label{ass:a2}
  The function $h^s$ is decreasing.
  \end{enumerate}
\end{assumption}

The second part of the above assumption
should be understood in the sense of Remark \ref{rem20},
namely $h^s$ is a.e.\ equal to a decreasing function from $[0,H^s)$ to $\R_+$.
Note that under both parts of the assumption,
the hazard rate function $h^s$ has a finite essential supremum.
Since the hazard rate function of any distribution is only locally integrable and never integrable on its support, both Assumptions \ref{ass:a}(\ref{ass:a1}) and \ref{ass:a}(\ref{ass:a2}) imply $H^s = \infty$.

\begin{theorem}
  \label{th-main2}
  Suppose Assumption \ref{ass-main} holds, and $\nu_*$ and  $\eta_*$ are as defined in \eqref{def-invmeas} and
  \eqref{def-invrenegs}.    Also, suppose 
  $(\fx,\fmeas, \freneg)$ solves  the fluid equations  with arrival rate $\lambda$ and initial condition $(X(0), \nu_0, \eta_0) \in \newcspace$, with auxiliary processes $(D, K, R, S, Q, B)$
   as in Remark \ref{rem-aux}.  
  Then the following is true:
  \begin{enumerate}
  \item 
    When $\lambda < 1$,
   it follows that   $(\fx_t, \fmeas_t, \freneg_t) \rightarrow (\lambda, \lambda \fmeass, \lambda \frenegs)$
  as $t \rightarrow \infty$. 
    In particular,  $\delta_{z_*^\lambda}$, with  
   $ z_*^\lambda  = (\lambda, \lambda \fmeass, \lambda \frenegs)$, is the unique random fixed point
    of the fluid equations with  arrival rate $\lambda$.
  \item   When $\lambda > 1$, 
      and Assumption \ref{ass:a}  is also satisfied, then
    $\freneg_t \Rightarrow  \lambda\frenegs$ as $t \to \iy$ and 
    \begin{enumerate} 
    \item  there exists $T < \infty$ such that 
  \begin{equation}
    \label{conv-1}
    B(t) = \langle \f1, \fmeas_t\rangle = 1, \quad \mbox{ for all } t \geq T, 
  \end{equation}
   and 
   \begin{equation}
     \label{conv-2}
     \fmeas_t \Rightarrow \fmeass \mbox{ and } \lan h^s,\nu_t\ran\to1  \quad \mbox{ as }  t \rightarrow \infty, 
   \end{equation}
   with the convergence in \eqref{conv-2} also holding in total variation when  Assumption \ref{ass:a}(\ref{ass:a1}) holds;
   \item
     if, in addition, Assumption \ref{ass-unique}  is also satisfied 
      (with $z_*^\lambda$ as defined therein), then $\delta_{z_*^\lambda}$ is the unique random fixed point of the fluid equations  
    with  arrival rate $\lambda$. 
   \end{enumerate}  
  \item  If $\lambda = 1$   
     and     Assumption \ref{ass:a}(\ref{ass:a2}) is satisfied,
     then $\freneg_t \Rightarrow  \lambda\frenegs$ and $B(t) \to 1$ as $t \to \infty$.
     If,  in addition, Assumption \ref{ass-unique} holds (with $z_*^1$ as defined therein), then 
    $\delta_{z_*^1}$ is the unique random fixed point of the fluid equations  
    with  arrival rate $1$.  
  \end{enumerate} 
\end{theorem}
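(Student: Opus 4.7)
The plan is to treat the three regimes separately but with two common reductions. First, the convergence $\eta_t\Rightarrow\lambda\eta_*$ will hold in all three cases and follows directly from \eqref{f4}: for any $\psi\in\mathcal{C}_b[0,H^r)$, the initial-condition term in \eqref{f4} is dominated by $\|\psi\|_\infty\int [\bar G^r(x+t)/\bar G^r(x)]\,\eta_0(dx)$, which vanishes as $t\to\infty$ by dominated convergence using the finiteness of $\theta^r$ in Assumption \ref{ass-main}, while the ``new arrivals'' term converges to $\lambda\int_0^\infty\psi(s)\bar G^r(s)\,ds=\lambda\langle\psi,\eta_*\rangle$. Second, once we also have $B(t)\to\lambda\wedge 1$, Proposition \ref{cor-randomfp} immediately yields that $\delta_{z^\lambda_*}$ is the unique random fixed point. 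Thus the substantive content in all three regimes lies in controlling $X$, $\nu$, and $B$.

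For the subcritical case $\lambda<1$, I would first show that $X(t)<1$ eventually, so that by \eqref{fkprime} we have $K'(t)=\lambda$ for all large $t$. A convenient route combines the mass balance \eqref{eq-fx} with the representation \eqref{f5}: since $D'(t)=\langle h^s,\nu_t\rangle$ and \eqref{f5} with $\psi=h^s$ gives $D'(t)=[\text{initial, vanishing}]+(g^s*K')(t)$, and since $K'\leq\lambda\vee\langle h^s,\nu_t\rangle$, one can bootstrap using the unit mean of $g^s$ from \eqref{def-mean2} to show $\limsup_{t\to\infty}X(t)\leq\lambda$ and hence $X(t)<1$ for all large $t$. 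Once $K'\equiv\lambda$ past some finite time, a direct convolution argument applied to \eqref{f5} yields $\langle\psi,\nu_t\rangle\to\lambda\langle\psi,\nu_*\rangle$ for $\psi\in\mathcal{C}_b[0,H^s)$, so $\nu_t\Rightarrow\lambda\nu_*$; then \eqref{eq-fnonidling} forces $X(t)\to\lambda$.

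For the supercritical case $\lambda>1$ under Assumption \ref{ass:a}, I must establish \eqref{conv-1} and \eqref{conv-2}. For \eqref{conv-1}, the idea is that if $B(s)<1$ on an interval then $K'=\lambda$ there and \eqref{f5} shows that $B$ is pushed upward at positive rate controlled by $\bar G^s$; combined with the mass-conservation identity \eqref{qt-conserve}, this forces $\{t:B(t)<1\}$ to be bounded, giving a finite $T$ with $B(t)=1$ for $t\geq T$. Past $T$, \eqref{eq-fk} yields $K'=\langle h^s,\nu_t\rangle$, reducing \eqref{f5} with $\psi=h^s$ to a renewal-type equation for $\langle h^s,\nu_t\rangle$. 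Under Assumption \ref{ass:a}(\ref{ass:a2}), I would adapt the renewal-equation analysis and recursive asymptotic estimates of \cite{ZL} to conclude $\langle h^s,\nu_t\rangle\to 1$ and $\nu_t\Rightarrow\nu_*$. Under Assumption \ref{ass:a}(\ref{ass:a1}), I would instead use the extended relative entropy $\mathcal{H}(\nu_t\|\nu_*)$ as a Lyapunov functional: a formal derivation gives the transport equation $\partial_t f+\partial_x f=-h^s f$ with boundary condition $f(0,t)=K'(t)$ for the density of $\nu_t$, a direct computation along the flow shows $\mathcal{H}$ is non-increasing, and the uniform lower bound $h^s\geq\epsilon_h>0$ provided by Assumption \ref{ass:a}(\ref{ass:a1}) yields an entropy-dissipation inequality strong enough to upgrade convergence to total variation.

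For the critical case $\lambda=1$ under Assumption \ref{ass:a}(\ref{ass:a2}), the supercritical argument cannot be applied directly since one cannot expect $B(t)=1$ to hold after a finite time; oscillations $B(t)<1$ may persist. I would still work with the renewal-type equation for $K$ derived from \eqref{eq-fk} and \eqref{f5}, but aim to show $1-B(t)\to 0$ without a fixed stopping time, exploiting the already established $\eta_t\Rightarrow\eta_*$ to bound the reneging rate via \eqref{fr}, using $Q(t)=[X(t)-1]^+\leq\langle\mathbf{1},\eta_t\rangle$ from \eqref{fqfx} and \eqref{fqfreneg}, and then running a recursive averaging estimate on $B$ over windows of length comparable to the service mean. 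I expect this control of the oscillations of $B$ below $1$ to be the main obstacle, since in the critical regime the mass balance provides no net drift and the argument must rely on the monotonicity properties of the renewal kernel $\bar G^s$ inherited from the decreasing hazard rate. Once $B(t)\to 1$ is established, the reduction in the first paragraph closes the proof.
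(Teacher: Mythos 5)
Your overall architecture is the paper's (get $\eta_t\Rightarrow\lambda\eta_*$ from \eqref{f4}; subcritical via the convolution representation of $D$ and eventual $K'=\lambda$; entropy under Assumption \ref{ass:a}(\ref{ass:a1}) and a renewal-equation analysis under Assumption \ref{ass:a}(\ref{ass:a2}); uniqueness of random fixed points via Proposition \ref{cor-randomfp}), but the two steps you treat as routine are precisely where the difficulty lies, and as sketched they fail. Your claim that whenever $B(s)<1$ the mass $B$ ``is pushed upward at positive rate,'' so that $\{t:B(t)<1\}$ is bounded, is not correct: on an excursion one has $B'(t)=\lambda-\langle h^s,\nu_t\rangle$ (see \eqref{1:1}), and nothing prevents $\langle h^s,\nu_t\rangle$ from exceeding $\lambda$ (e.g.\ when $c_h$ or $g^s(0)$ is large), so $B$ may repeatedly dip below $1$; controlling these oscillations is the crux of the whole proof. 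Under Assumption \ref{ass:a}(\ref{ass:a1}) the paper can only conclude $B\equiv1$ eventually \emph{after} the entropy analysis: it first shows the cumulative busy time $\int_0^t 1_{\{B(s)=1\}}ds$ diverges and that the entropy decays along the busy set, so that $\nu_t$ is TV-close to $\nu_*$ at the start of any late excursion, and only then deduces $B'>0$ there (Proposition \ref{prop-superpf}); your plan of proving \eqref{conv-1} first by a drift argument and then running the Lyapunov analysis reverses the logical dependence. Under Assumption \ref{ass:a}(\ref{ass:a2}) (both $\lambda>1$ and $\lambda=1$) the paper's Proposition \ref{prop-key} rests on the fact that a decreasing hazard rate makes the renewal function concave (\cite{Brown}), hence the renewal density $u_s$ is nonincreasing with $u_s(0)=g^s(0)$, which yields $Z'(t)\le W(t)g^s(0)$ and drives the recursive bootstrap $\lambda_n,\tau_n$ of Lemma \ref{lem:preprop}; ``monotonicity of the kernel $\bar G^s$'' is not the relevant property, and without this input your windowed-averaging plan for $\lambda=1$ has no quantitative mechanism. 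Also, \cite{ZL} gives convergence of the queue, not of the measure, so $\nu_t\Rightarrow\nu_*$ in case (2) under Assumption \ref{ass:a}(\ref{ass:a2}) needs the additional argument of Lemma \ref{cor-supercritlim}.

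The second gap is your assertion that the extended relative entropy is nonincreasing along the flow. Because the boundary condition is discontinuous, on intervals where $B<1$ the (formal) dissipation estimate carries an extra positive term of the form $\lambda\log\lambda-z\log z$ (see \eqref{15}), so monotonicity fails exactly where it is needed; moreover, since the relevant measure is only a sub-probability, the functional can be negative (see \eqref{10}), so one cannot argue via ``nonincreasing and bounded below by $0$.'' The paper's rigorous proof instead decomposes $\nu_t=\newdel_t+\mu_t$ via \eqref{f5}, works with $r_t=R(\mu_t\|\nu_*)$ expressed through $k$, proves $r_t$ is Lipschitz, obtains exponential decay on busy intervals together with an integrable error coming from $\newdel_t$ on excursions, and combines this with the divergence of the cumulative busy time; a density/PDE argument is unavailable since $\nu_t$ need not have a density (Remark \ref{rem-pde}). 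A minor further point: for $\lambda<1$ you cannot invoke Proposition \ref{cor-randomfp} (it assumes $\lambda\ge1$ and $B\to1$); there, uniqueness of the random fixed point follows directly from the convergence $(X(t),\nu_t,\eta_t)\to(\lambda,\lambda\nu_*,\lambda\eta_*)$ together with \eqref{eq-fnonidling}, as in the paper.
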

\begin{proof}
 The proof of statement (1) is given in Section \ref{subs-pf2subcrit}, and  for all $\lambda \geq 0$,
    the (weak) convergence of $\eta_t$ to
    $\lambda \frenegs$ under Assumption \ref{ass:a} 
    follows from Lemma \ref{lem-reneg}.
    Now, when $\lambda >1$,  \eqref{conv-1} and \eqref{conv-2} follow from Proposition \ref{prop-superpf} and Remark \ref{rem-theta}
    when Assumption \ref{ass:a}(\ref{ass:a1}) holds, and from   Proposition \ref{prop-key} and  Lemma \ref{cor-supercritlim} when
    Assumption \ref{ass:a}(\ref{ass:a2}) holds.  Further, when $\lambda = 1$ the convergence $B(t) \rightarrow 1$ under
    Assumption \ref{ass:a}(\ref{ass:a2}) also follows from Proposition \ref{prop-key}. 
    Lastly,  the uniqueness results for the random fixed point stated in (2b) and (3) 
follow from the convergence results in (2a) and   (3) and Proposition \ref{cor-randomfp}. 
  
\end{proof}

  \begin{remark}
   \label{rem-fix} 
         {\em           
           The main application of Theorem \ref{th-main2} is to characterize the limit of the scaled
           stationary distributions of the sequence of $N$-server measure-valued state processes, and thereby 
           (partially)  fix a technical flaw  in the  convergence result stated in Theorem 3.3 of \cite{KanRam12}.

           To explain this in greater  detail, 
           let $\bar{Z}^{(N)}_* := (\bar{X}^{(N)}_*, \bar{\nu}^{(N)}_*, \bar{\eta}^{(N)}_*)$
           have the law of  the  stationary distribution of
           the measure-valued $N$-server state dynamics of an $N$-server queue with reneging introduced in \cite{KanRam10},
          when the scaled arrival process is given by  
           $\bar{E}^{(N)}$ (e.g., Poisson with a scaled arrival rate $\lambda^{(N)} > 0$).
           Existence of such a  stationary distribution was established in Theorem 7.1 of
           \cite{KanRam12}. 
           Also, let $\bar{Z}^{(N)} := (\bar{X}^{(N)}, \bar{\nu}^{(N)}, \bar{\eta}^{(N)})$ represent
           the dynamics of the fluid-scaled measure-valued
           state representation of the $N$-server queue with  initial condition 
           $\bar{Z}^{(N)}(0) = \bar{Z}^{(N)}_*$.  
           Then,  under the  assumption that
           $\bar{E}^{(N)}$ converges weakly to $E^\lambda$ for some $\lambda > 0$,  
            tightness of the sequence
   $\{\bar{Z}^{(N)}_* := (\bar{X}^{(N)}_*, \bar{\nu}^{(N)}_*, \bar{\eta}^{(N)}_*)\}_{N \in \N},$ 
            was  established in Theorem 6.2 of \cite{KanRam12}.
            Let  $\bar{Z} = (\bar{X}, \bar{\nu}, \bar{\eta})$ denote any subsequential limit.
            We now claim that then (the law of) $\bar{Z}$ must be a random fixed point of the fluid equations with
            arrival rate $\lambda$. 
            To see why the claim is true,  we invoke 
            the fluid limit theorem established in Theorem 3.6 of \cite{KanRam10}, to conclude that for any
            $t > 0$, the $N$-server fluid-scaled state process $\bar{Z}^{(N)}(t)$ (initialized at  the stationary
            distribution $\bar{Z}^{(N)}_*$)     
           converges weakly to  $Z(t)$, where
           $Z := (X,\nu,\eta)$ solves the fluid equations  with arrival rate $\lambda$ and initial condition $\bar{Z}$. 
           However, for  any $t > 0$,
           since by stationarity $\bar{Z}^{(N)}(t)$  has the same law as $\bar{Z}^{(N)}_*$,           
           it follows that the laws of their 
           corresponding weak limits, $Z(t)$ and $\bar{Z}$, must also coincide.  
           By Definition \ref{def-RFP}, this proves the claim that $\bar{Z}$ is a random fixed point.

           In the proof of
           Theorem 3.3 (of Section 6.2) in \cite{KanRam12},  it was 
           assumed without justification that $\bar{Z}$ is deterministic, and that was used to conclude 
           that  $\bar{Z}$ must belong to the invariant manifold ${\mathcal I}_\lambda$ (see Remark \ref{rem-fp}).   
           When combined with Assumption \ref{ass-unique}, this leads to the conclusion
           that  $\bar{Z} = z_*^\lambda$, thus showing that all subsequential limits coincide,
           and hence, that $z_*^\lambda$ is the weak limit of the  original stationary sequence $
           (\bar{Z}^{(N)}_*)_{N \in \N}$. 
           However, one cannot assume {\em a priori} that
           $\tilde{Z}$ is deterministic, and, as argued above, one only knows that any subsequential limit
           is a {\em random fixed point.}  To make this argument complete, which was  one of the main motivations of this paper, 
           one needs to show that there is precisely one 
           random fixed point, namely the one  concentrated at $z_*^\lambda$. 
            Theorem \ref{th-main2} does precisely this 
            for the class of service distributions satisfying Assumption \ref{ass:a},
            thus closing the gap in the proof of the convergence result in
             \cite{KanRam12} (for  service distributions in that class). 
          However, this still leaves the open 
             question of whether 
            this result remains true for a larger class of service distributions, in particular
            the entire class considered in \cite{KanRam12}.  
         }
 \end{remark}

 \begin{remark}
   \label{rem-interchange}
   {\em 
            Further, a related ancillary goal of this work is to determine whether 
            the diagram in Figure \ref{diag-comm} below commutes under general convergence
            conditions on the initial states (essentially Assumption 3.1 of \cite{KanRam10}).
           Referring to the same notation as used in Remark \ref{rem-fix},  
            the  top horizontal arrow in Figure \ref{diag-comm}
            holds due to 
            ergodicity of the $N$-server state dynamics, which  was established in
            Theorem 7.1 of \cite{KanRam12} under some additional conditions on the service
            and reneging distributions (see Assumption 7.1 therein).  
          On  the other hand, as already mentioned in Remark \ref{rem-fix}, 
           the left vertical arrow  follows from the fluid limit theorem
           Theorem 3.6 of \cite{KanRam10} (under suitable convergence assumptions on the initial data). }
           
            \begin{figure}[h]
 	\centering
 	\[ \begin{array}{lcr} 
 	  \bar{Z}^{(N)} (t) =	(\bar X^{(N)}(t),\bar \nu^{(N)}_t,\bar\eta^{(N)}_t) \qquad & 
         \stackrel{\mbox{\tiny{Thm 7.1 of \cite{KanRam12}}}}{\Longrightarrow}   & 
            \qquad 
            \bar{Z}^{(N)}_* =  (\bar X^{(N)}_*,\bar \nu^{(N)}_*,\bar \eta^{(N)}_*) \\
            \hspace{0.1in} \\
            \qquad    \qquad \,   \big\lVert & &   \big\rVert \, \qquad    \qquad \\
          \qquad   \mbox{\tiny{Thm 3.6 of \cite{KanRam10}}} & &  \mbox{\tiny{Thm 6.2  of \cite{KanRam12} and {\bf Thm 3.2}}}\\
            \qquad    \qquad	\big\Downarrow & & \big\Downarrow \qquad \qquad \\
             \hspace{0.1in} \\
 	     Z(t) = 	(X(t), \nu_t, \eta_t) \qquad &  \stackrel{\mbox{\tiny{{\bf Thm 3.2}}}}{\longrightarrow }           
          & \qquad z_* =  (x_*, (\lambda \wedge 1) \nu_*, \lambda \eta_*)
 	\end{array}
        \]
 	\caption{Interchange of Limits Diagram}
 	\label{diag-comm}
            \end{figure}

            \noindent 
                {\em  Along with the tightness of $(\bar{Z}^{(N)}_*)_{N \in \N}$ established in \cite{KanRam12}, 
                  Theorem \ref{th-main2} of the present article completes the diagram by establishing (for a class of service distributions) 
            the right vertical  arrow (as explained in Remark
            \ref{rem-fix})  as well as the bottom horizontal arrow, though the latter only 
              when $\lambda \neq 1$ (i.e., in the
             subcritical and supercritical regimes). 
            It would be worthwhile in the future to  investigate
            whether this result can be extended further, in particular to establish
            converence even in the critical regime $\lambda = 1$, possibly under additional
            conditions such as a finite second moment condition, like that imposed in
            Theorem 3.9  of  \cite{KasRam11} (to study large-time behavior of fluid limits 
            in the absence of reneging). 
         }
         \end{remark}

 \section{Proof of Theorem \ref{th-main2}}
 \label{sec-proofs}
\beginsec

 We assume throughout this section that Assumption \ref{ass-main} holds.
We then have the following elementary lemma.

\begin{lemma}
	\label{lem-reneg}
        Fix $\lambda \geq 0$ and, 
 	given any $\freneg_0 \in \calM_F[0,H^r)$, let $\freneg = (\freneg_t)_{t \geq 0}$ be
	the solution to \eqref{eq-ftreneg}.    Then $\freneg_t \Rightarrow \lambda \frenegs$
	as  $t \rightarrow \infty$. 
\end{lemma}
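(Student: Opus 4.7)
The plan is to prove this via a direct application of the explicit renewal-type representation for $\eta_t$ given in \eqref{f4} of Theorem \ref{th-fluid}, passing to the limit $t\to\infty$ term by term using dominated convergence.

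First, I would fix an arbitrary test function $\psi \in \calC_b[0,H^r)$ and apply \eqref{f4} to obtain
\[
\lan\psi,\eta_t\ran
= \int_{[0,H^r)}\psi(x+t)\frac{\bar G^r(x+t)}{\bar G^r(x)}\,\eta_0(dx)
+ \lambda\int_0^t \psi(s)\bar G^r(s)\,ds.
\]
The goal is then to show that the right-hand side converges to
$\lambda\int_0^\iy \psi(s)\bar G^r(s)\,ds = \lambda\lan\psi,\eta_*\ran$.

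For the second term, since the mean patience time $\theta^r=\int_0^\iy\bar G^r(s)\,ds$ is finite by Assumption \ref{ass-main}, dominated convergence with dominating function $\|\psi\|_\iy\bar G^r$ yields
\[
\lambda\int_0^t\psi(s)\bar G^r(s)\,ds \;\longrightarrow\; \lambda\int_0^\iy\psi(s)\bar G^r(s)\,ds = \lambda\lan\psi,\eta_*\ran.
\]
For the first term, I would exploit that $\bar G^r$ is non-increasing, so that the integrand is bounded in absolute value by $\|\psi\|_\iy$, which is $\eta_0$-integrable since $\eta_0$ is a finite measure. For each fixed $x\in[0,H^r)$ one has $\bar G^r(x)>0$ and $\bar G^r(x+t)\to 0$ as $t\to\iy$, so the integrand converges pointwise to zero on the support of $\eta_0$. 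Another application of the dominated convergence theorem shows that the first term vanishes in the limit.

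Combining these two limits gives $\lan\psi,\eta_t\ran\to\lambda\lan\psi,\eta_*\ran$ for every $\psi\in\calC_b[0,H^r)$, which is precisely the definition of $\eta_t\To\lambda\eta_*$. No substantial obstacle arises here: the only things to verify are the uniform domination in the first term (which uses monotonicity of $\bar G^r$ to avoid any problem with the factor $1/\bar G^r(x)$ near $H^r$) and the finiteness of $\theta^r$ for the second term, both of which are given.
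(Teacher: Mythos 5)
Your proof is correct and follows essentially the same argument as the paper: apply the representation \eqref{f4} to a bounded continuous test function, kill the initial-condition term by dominated convergence using $\bar{G}^r(x+t)/\bar{G}^r(x)\to 0$ pointwise and finiteness of $\eta_0$, and pass to the limit in the second term via dominated convergence using the finite mean patience time.
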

\begin{proof}
   Fix   $\fsp \in {\mathcal C}_b(\R_+)$. 
   In view of \eqref{f4}, the boundedness of $\fsp$,   the
      finiteness of the measure $\eta_0$ 
     the dominated convergence theorem  and 
     the fact that  $\bar{G}^r(x+t)/\bar{G}^r(x) \rightarrow 0$ for every $x \in [0,H^r)$ as $t \rightarrow \infty$, together
       imply 
     that the first term  on  the right-hand side of
    \eqref{f4} vanishes.  On the other hand, since the mean patience time 
    $\int_0^\infty  \bar{G}^r(s) ds$ is finite, the dominated convergence theorem shows that the last
    term on the right-hand side of
    \eqref{f4} converges to $\langle \fsp, \lambda \eta_*\rangle$.
    This  concludes  the proof that $\freneg_t \Rightarrow \lambda \eta_*$ as  $t \rightarrow \infty$.  
      \end{proof}

\subsection{Proof in the Subcritical Regime}
\label{subs-pf2subcrit}

In this section we prove part (1) of Theorem \ref{th-main2}. 
Fix $\lambda \in (0,1)$ and
$(\fx(0), \fmeas_0, \freneg_0) \in \newcspace$.
Suppose $(\fx,\fmeas, \freneg)$ is a solution to the fluid equations, and let $(D, K, R, S, Q, B)$
be the corresponding auxiliary processes.

 The weak convergence of $\eta_t$ to $\lambda \frenegs$ as $t \to \iy$ follows from Lemma \ref{lem-reneg}.
We now analyze the remaining components of the solution.   
Using the definition of $D$ from \eqref{cond-radon}, setting $\fsp=h^s$ in (\ref{f5}),  interchanging the order of integration,  using integration by parts and the fact $G^s(0+) = 0$, we obtain 
\begin{eqnarray}
D(t) =   \ds \int_0^t \lan  h^s, \fmeas_s \ran \, ds
  &=&  \int_0^t \left(\int_{[0,H^s)}  \frac{ g^s(x+s)}{\bar{G}^s (x)} \fmeas_0 (dx)
	+ \int_{[0,s]} g^s(s-u) d\fk(u)\right)ds \\ &=&  \int_{[0,H^s)}  \frac{ G^s(t+x)-G^s(x)}{\bar{G}^s (x)} \fmeas_0 (dx) +\int_{[0,t]} G^s(t-u) d\fk(u) \nonumber \\ &=&  \int_{[0,H^s)}  \frac{ G^s(t+x)-G^s(x)}{\bar{G}^s (x)} \fmeas_0 (dx) + \int_0^t K(s) g^s(t-s)ds. \nonumber 
\end{eqnarray}
Substituting this in (\ref{eq-fx}),  using 
(\ref{qt-conserve})  and (\ref{fr}), and performing repeated integration by parts, we obtain 
\begin{eqnarray}
	\fx (t) &  = & \fx (0) + \lambda t  - \int_{[0,H^s)}  \frac{ G^s(t+x)-G^s(x)}{\bar{G}^s (x)} \fmeas_0 (dx)- \int_0^t K(s) g^s(t-s)ds - R(t)
	 \label{rep-X}
	\\ &=& 
	\fx (0) + \lambda t  - \int_{[0,H^s)}  \frac{ G^s(t+x)-G^s(x)}{\bar{G}^s (x)} \fmeas_0 (dx) \nonumber \\ & & \qquad - \int_0^t (Q(0)+\lambda s -Q(s)-R(s)) g^s(t-s)ds - R(t)  \nonumber \\ &=&  \fx (0) -Q(0)G^s(t)  - \int_{[0,H^s)}  \frac{ G^s(t+x)-G^s(x)}{\bar{G}^s (x)} \fmeas_0 (dx) + \int_0^t Q(s)g^s(t-s)ds   \nonumber \\ & & \qquad +  \lambda \int_0^t \bar G^s(t-s)ds
            +\int_0^t R(s)g^s(t-s)ds - R(t)  \nonumber \\ &=&   \fx (0) -Q(0)G^s(t)  - \int_{[0,H^s)}  \frac{ G^s(t+x)-G^s(x)}{\bar{G}^s (x)} \fmeas_0 (dx) +\int_0^t Q(s)g^s(t-s)ds \nonumber \\ & & \qquad +  \int_0^t \left(\lambda - \int_0^{\fq(s)}h^r((\fnsf)^{-1}(y))dy\right) \bar G^s(t-s)  ds,  \nonumber
\end{eqnarray}
which implies that for each $t\geq 0$, \begin{equation}X(t) \leq  \fx (0) -Q(0)G^s(t)  - \int_{[0,H^s)}  \frac{ G^s(t+x)-G^s(x)}{\bar{G}^s (x)} \fmeas_0 (dx) +\lambda \int_0^t \bar G^s(u)du +\int_0^t Q(s)g^s(t-s)ds. \label{Xest}\end{equation} 

We now make use of the following simple observation.  

\begin{lemma} \label{lem:limsupQ} 
	$\limsup_{t\rightarrow \infty} \int_0^t Q(s)g^s(t-s)ds \leq  \limsup_{t\rightarrow \infty}  Q(t).$
\end{lemma}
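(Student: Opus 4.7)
The plan is to treat this as a standard convolution tail estimate, exploiting that $g^s$ is a probability density (so $\int_0^\infty g^s(u)\,du = 1$) and that $Q$ is bounded on compact intervals (being continuous as part of the continuous solution whose existence is asserted in Theorem~\ref{th-fluid}, together with $Q = [X-1]^+$ from \eqref{fqfx}).

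Denote $L := \limsup_{t\to\infty} Q(t)$. If $L = +\infty$ there is nothing to prove, so assume $L < \infty$. Fix $\eps > 0$ and choose $T_0 = T_0(\eps)$ so that $Q(s) \leq L + \eps$ for every $s \geq T_0$. For $t > T_0$ split
\[
\int_0^t Q(s)\, g^s(t-s)\, ds \;=\; \int_0^{T_0} Q(s)\, g^s(t-s)\, ds \;+\; \int_{T_0}^t Q(s)\, g^s(t-s)\, ds.
\]
For the second piece, the change of variables $u = t-s$ and $\int_0^\infty g^s(u)\,du = 1$ give
\[
\int_{T_0}^t Q(s)\, g^s(t-s)\, ds \;\leq\; (L+\eps)\int_0^{t-T_0} g^s(u)\, du \;\leq\; L+\eps.
\]

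For the first piece, let $M := \sup_{s \in [0,T_0]} Q(s) < \infty$ (finite since $Q$ is continuous). Then
\[
\int_0^{T_0} Q(s)\, g^s(t-s)\, ds \;\leq\; M\bigl(G^s(t) - G^s(t-T_0)\bigr),
\]
and since $G^s$ is a cumulative distribution function on $[0,\infty)$, both $G^s(t)$ and $G^s(t-T_0)$ converge to $1$ as $t\to\infty$, so this first piece tends to $0$. Combining the two bounds gives $\limsup_{t\to\infty}\int_0^t Q(s)\, g^s(t-s)\, ds \leq L+\eps$, and letting $\eps \downarrow 0$ yields the claim.

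There is really no substantive obstacle here; the only technical point to keep clean is the justification that $Q$ is locally bounded (hence the first term is genuinely negligible), which follows from the continuity of the solution. The rest is the standard fact that convolving an eventually-bounded function with a probability density preserves the $\limsup$ bound.
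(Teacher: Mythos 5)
Your proof is correct and follows essentially the same argument as the paper: the same split of the convolution at a time $T_\epsilon$ past which $Q \leq L + \epsilon$, the same bound $\bigl(\sup_{s\le T_\epsilon}Q(s)\bigr)\bigl(G^s(t)-G^s(t-T_\epsilon)\bigr)$ on the initial piece, and the same passage $t\to\infty$ followed by $\epsilon\downarrow 0$. The only cosmetic additions are the explicit treatment of the case $L=\infty$ and the remark on local boundedness of $Q$, which the paper handles implicitly via the c\`adl\`ag property of the solution.
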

\begin{proof} Let $q:=\limsup_{t\rightarrow \infty}  Q(t)$. Then for each $\epsilon>0$, there exists $T_\epsilon < \infty$ such that $Q(t) \leq q+\epsilon$ for all $t\geq T_\epsilon$. So for each $t>T_\epsilon$,  it follows that 
	\begin{eqnarray*}\int_0^t Q(s)g^s(t-s)ds &=&   \int_0^{T_\epsilon}  Q(s)g^s(t-s)ds + \int_{T_\epsilon}^t Q(s)g^s(t-s)ds \\ &\leq & \left(\sup_{0\leq s\leq T_\epsilon} Q(s)\right) (G^s(t)-G^s(t-T_\epsilon)) + (q+\epsilon)G^s(t-T_\epsilon). \end{eqnarray*}
	By taking the limit supremum as $t \rightarrow \infty$ of both sides,
	we have $\limsup_{t\rightarrow \infty} \int_0^t Q(s)g^s(t-s)ds \leq q+\epsilon$.
	The lemma follows on taking $\epsilon\rightarrow 0$.
\end{proof}

Continuing with the proof of Theorem \ref{th-main2}(1), taking the limit supremum in  (\ref{Xest}), and using 
Lemma \ref{lem:limsupQ},
the identity $\int_0^\infty G^s(u) du = 1$ from Assumption \ref{ass-main},  the fact that 
$\lim_{t \rightarrow \infty} (G^s(t+x) - G^s(x))/\bar{G}^s(x) \rightarrow 1$ for every $x$,
the bounded convergence theorem and the identity
$X(0) = Q(0) + \langle \f1, \nu_0\rangle$ from \eqref{fq},    we obtain 
\begin{equation}
  \label{eq-temp}
  \limsup_{t\rightarrow \infty}  X(t) \leq \lambda + \limsup_{t\rightarrow \infty} \int_0^t Q(s)g^s(t-s)ds \leq \lambda + \limsup_{t\rightarrow \infty} Q(t).
\end{equation}

We now claim that there exists $T' <\infty$ such that $\lan  1, \fmeas_t \ran < 1$ for all $t \geq T'$.
We argue by contradiction to prove the claim. If the claim is false, note that  for any $T' <\infty$, there would exist
$T >T'$ such that $\lan  1, \fmeas_T \ran = 1$.  Then, due to (\ref{fqfx}), we would
have $\limsup_{t\rightarrow \infty}  X(t)  = \limsup_{t\rightarrow \infty}  Q(t) + 1$, which contradicts \eqref{eq-temp} since $\lambda <1$.
Thus, fix $T' < \infty$ as in the claim.  Then, 
 by Lemma \ref{lem:shift}, $(X^{[T']}, \nu^{[T']}, \eta^{[T']})$ solves the fluid equations with arrival rate $\lambda$ and initial condition 
  $(X(T'), \nu_{T'} ,\eta_{T'})$ 
  and hence, \eqref{f5} holds with $\nu$ and $K$ replaced with $\nu^{[T']}$ and $K^{[T']}$, respectively.
Since 
$\nu^{T'}(t) = \nu_{T'+t}$  and 
      by  \eqref{fqfx}, \eqref{fr} and  \eqref{qt-conserve},
      $Q(T'+ \cdot) \equiv  0$, $R^{[T']}(\cdot) \equiv 0$ and $K^{T'} (t)  = K(T'+t) - K(T') = \la t,$ $t \geq 0$, 
this implies that   for  every $\fsp \in {\mathcal C}_b[0,H^s)$, 
  \[  \int_{[0,H^s)} \fsp (x) \nu_{T' + t} (dx) = \int_{[0,H^s)} \fsp (x+t) \frac{\bar{G}^s(x+t)}{\bar{G}^s(x)}  \nu_{T'} (dx)
      + \int_0^t \fsp (t-s) \bar{G}^s  (t-s) \lambda ds. 
      \]
      Then, arguing as in the proof  of Lemma \ref{lem-reneg},  
      sending $t \rightarrow \infty$, and invoking the bounded convergence theorem, the first integral
      on the right-hand side  vanishes, and the second integral converges to $\lambda \int_{[0,H^s)} \fsp(x) \bar{G}^s(x) dx$.  
      Recalling  that $\nu_* (dx) = \bar{G}^s(x) dx$ and $\int_0^\infty \bar{G}^sds = 1$ from
      Assumption \ref{ass-main}, it  follows  that 
      $\nu_t \Rightarrow \la \nu_*$.  In turn, by the continuous
      mapping theorem this implies 
      $\langle \f1, \nu_t \rangle \Rightarrow \lambda$ as
      $t \rightarrow \infty$. 
      When combined with \eqref{eq-fnonidling} and the fact that  $\lambda < 1$, this implies  that as $t \rightarrow \infty$,
      the weak  limits of $X(t)$ and $\langle \f1, \nu_t \rangle$  coincide and are equal to $\lambda$.
      This concludes the proof of the  first  assertion of Theorem \ref{th-main2}(1).
      
    Now, if the initial condition $(X(0), \freneg_0, \fmeas_0)$
      had the law $\mu$ of a random fixed point with arrival rate $\lambda < 1$, then the convergence
      just established would imply that $\PP(\freneg_0 = \lambda \frenegs) = 1$ and $\PP(\fmeas_0 = \lambda \fmeass) = 1$.
      By the continuous mapping theorem, the latter implies that almost surely $\langle \f1, \fmeas_0\rangle = \langle \f1, \lambda \fmeass\rangle
      = \lambda$.   Since $\lambda < 1$,  it then follows from \eqref{eq-fnonidling} that $X(0) = \lambda$ almost surely, thus
      proving that $\mu = \delta_{z_*^\lambda}$ with $z_*^\lambda = (\lambda, \lambda \frenegs, \lambda \fmeass)$. 
This completes  the proof of Theorem \ref{th-main2}(1).

\subsection{Proof of Theorem \ref{th-main2}(2) when the hazard rate function is bounded away from
zero and infinity.}
\label{subs-pf1}

In this section we prove Theorem \ref{th-main2}(2a) under  Assumption \ref{ass:a}(\ref{ass:a1}).  
Fix  $\lambda > 1$,  suppose Assumption \ref{ass-main} is satisfied, Assumption \ref{ass-unique} holds (with $z_*^\lambda   = (x_*^\lambda, \nu_*,  \lambda \nu_*)$ denoting the  unique element of ${\mathcal I}_\lambda$)  and 
 Assumption \ref{ass:a}(\ref{ass:a1}) holds (with associated positive constants $\eps_h > 0$, $c_h < \infty$). 
For notational  convenience, we shall denote by $f^*(x) = \bar{G}^s(x)$ the density of $\nu^*$.
Note that the lower bound on $h^s$ implies that $g^s$, and thus $f^* = \bar{G}^s$, 
is strictly positive on $(0,\infty)$.

Now, fix  the initial condition $(X(0), \nu_0, \eta_0) \in \newcspace$,
and suppose $(X,\nu, \eta)$ is the associated  solution to the fluid  equations. 
We will establish convergence, as $t \rightarrow \infty$,  of the fluid age measure $\nu_t$ described by \eqref{eq-ftmeas} 
using an extended relative entropy functional in a manner reminiscent of a Lyapunov function.
Recall that  ${\mathcal P}(E)$ denotes the space of probability measures on  a measurable space $E$, and 
for a {\em finite} measure $P$ on  $E$, define the functional $R:(P\|\cdot): {\mathcal P}(E) \mapsto (-\infty, \infty]$  by
\begin{equation}
  \label{eq-Rfnal}
R(P\|Q) := \left\{
\begin{array}{ll}
 \displaystyle \int_{E} \log\frac{dP}{dQ}(x) dP(x) & \mbox{ if }  P\ll Q,\\
  \infty & \mbox{ otherwise, }
  \end{array}
\right.
\end{equation}
where $P \ll Q$ means $P$ is absolutely continuous with respect to $Q$ and we use the convention $0 \log 0 = 0$. 
We emphasize that we do not require $P$ to be a probability measure, as we will often have to deal with sub-probability measures,
but when both $P$ and $Q$ are probability measures, this is simply the relative entropy functional. 

\begin{remark}
  \label{rem-relent}
        {\em
          If $c_P = P(E) > 0$ denotes the total mass of $P$, then writing the above integral as $\int_E \frac{dP}{dQ}\log\frac{dP}{dQ}dQ$
and using the convexity of $x\mapsto x\log x$ on $(0,\infty)$ gives the lower bound 
\begin{equation}\label{10}
R(P\|Q)\ge c_P\log c_P, 
\end{equation}
which  is attained by $P$ that is a constant 
multiple of the probability measure $Q$. In particular,
$R(P\|Q)$ may assume negative values.
However, when $P$ is a probability measure, $R(P\|Q)$ is always nonnegative 
and $R(P\|Q)=0$ holds if and only if $P=Q$.
}
\end{remark}

 The proof of Theorem \ref{th-main2}(2) will make use of the following properties of the extended relative 
  entropy functional. 
 
\begin{lemma}
\label{lem-Pinsker}
Suppose $P$ and $Q$ are finite measures on $\R_+$, equipped with the Borel $\sigma$-algebra,
with $c_P := P(\R_+) > 0$ and $Q(\R_+) = 1$. 
If $P$ and $Q$, respectively, have densities  $p$ and $q$ (with respect to Lebesgue measure),
then 
\begin{equation}\label{05}
\int_0^\iy|p(x)-q(x)|dx
\le  |c_P-1| +\Big(2c_P^{-1}|R(P\|Q)|+2|\log c_P|\Big)^{1/2}. 
\end{equation} 
\end{lemma}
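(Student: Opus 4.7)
The plan is to reduce Lemma \ref{lem-Pinsker} to the classical Pinsker inequality for probability measures by normalizing $P$. Introduce the probability measure $\tilde P := c_P^{-1} P$ with density $\tilde p := c_P^{-1} p$, and insert $\tilde p$ as an intermediate step via the triangle inequality:
\[
\int_0^\infty |p-q|\,dx \le \int_0^\infty |p-\tilde p|\,dx+\int_0^\infty|\tilde p-q|\,dx.
\]
Since $p-\tilde p=(c_P-1)\tilde p$ and $\tilde P$ is a probability measure, the first term on the right equals $|c_P-1|$, which is exactly the first term on the right-hand side of \eqref{05}.

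For the second term I would apply the standard Pinsker inequality (for probability measures) to $\tilde P$ and $Q$, giving $\int|\tilde p-q|\,dx \le \sqrt{2R(\tilde P\|Q)}$. The key calculation is then a simple change of variables in the extended relative entropy:
\[
R(\tilde P\|Q)=\int \tilde p\log\frac{\tilde p}{q}\,dx=\frac{1}{c_P}\int p\Bigl[\log\frac{p}{q}-\log c_P\Bigr]dx=c_P^{-1}R(P\|Q)-\log c_P,
\]
using $\int p\,dx=c_P$. Note $R(\tilde P\|Q)\ge 0$ automatically (this is consistent with the lower bound \eqref{10}). Bounding crudely, $R(\tilde P\|Q)\le c_P^{-1}|R(P\|Q)|+|\log c_P|$, so that
\[
\int|\tilde p-q|\,dx\le\bigl(2c_P^{-1}|R(P\|Q)|+2|\log c_P|\bigr)^{1/2},
\]
and combining with the first term yields \eqref{05}.

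There is no real obstacle beyond choosing the right intermediate measure in the triangle inequality. The subtle point is that the naive choice of intermediate measure $c_P Q$ would produce the bound $c_P\sqrt{2R(\tilde P\|Q)}+|c_P-1|$, introducing an unwanted factor $c_P$ in front of the square root; this is avoided precisely by splitting at $\tilde p$ instead, since then the total-mass defect is measured directly and the probability-measure Pinsker bound applies without rescaling.
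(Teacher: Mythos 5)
Your proposal is correct and is essentially the paper's own argument: split at the normalized density $c_P^{-1}p$ via the triangle inequality, apply Pinsker's inequality to the probability measures $c_P^{-1}P$ and $Q$, and use the identity $R(c_P^{-1}P\|Q)=c_P^{-1}R(P\|Q)-\log c_P$ before bounding by absolute values. No gaps.
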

\begin{proof}
  First note that $c_P^{-1} P$ and $Q$ are probability measures, and so, invoking
  Pinsker's inequality (see, e.g., \cite{CKbook}, p.\ 44)
in the second  inequality below, we obtain 
\begin{eqnarray*} \int_0^\iy|p(x)-q(x)|dx  & \leq  & 
\int_0^\iy|p(x)-c_P^{-1}p(x)|dx  + \int_0^\iy|c_P^{-1}p(x)- q(x)|dx  \\
& \leq & |c_P-1|  +  \left( 2 R(c_P^{-1} P \| Q) \right)^{1/2} \\
& \leq & |c_P-1| + \left( 2 c_P^{-1}  R(P \| Q ) - 2 \log c_P \right)^{1/2}
\end{eqnarray*}
which is clearly dominated by the right-hand side of \eqref{05}.
\end{proof}

The second property is encapsulated in the following lemma,
which crucially relies on the lower bound on the hazard rate $h^s$,
and whose proof is relegated to Appendix \ref{sec-lem1}.

\begin{lemma}\label{lem1}
  Let $f: [0,\infty) \mapsto [0,\infty)$ be a  measurable function that satisfies $\int_0^\iy f dx\le 1$, 
 suppose $z_f:=\int_0^\iy h^s f dx<\iy$ and $\mu^f$ is the measure with density $f$.  Then 
\begin{equation}
  \label{estimate}
\int_0^\iy h^s(x) f(x)\log\frac{f(x)}{f^*(x)}dx-z_f \log z_f
\ge \eps_h \int_0^\iy f(x)\log\frac{f(x)}{f^*(x)} dx =  \eps_h R(\mu^{f}\|\nu_*). 
\end{equation}
\end{lemma}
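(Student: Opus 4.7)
The plan is to reduce \eqref{estimate}, after subtracting $\eps_h R(\mu^f\|\nu_*)$ from both sides, to the scalar inequality
\[
\int_0^\infty (h^s(x) - \eps_h)\, f(x) \log\frac{f(x)}{f^*(x)} \, dx \;\geq\; z_f \log z_f,
\]
which I will establish in two stages: a Jensen-type lower bound for the left-hand side, followed by a one-variable monotonicity argument in the parameter $\eps_h$.

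I would begin with the preliminary observation that $\eps_h \le 1$. This follows because $h^s f^* = g^s$ integrates to $1$ while $h^s \ge \eps_h$ a.e., so $1 \ge \eps_h \int f^*\,dx = \eps_h$, using $\int f^*\,dx = 1$ from Assumption \ref{ass-main}. The degenerate cases $f \equiv 0$ and $\eps_h = 1$ (the latter forcing $h^s \equiv 1$ a.e.\ on the support of $f^*$) both reduce \eqref{estimate} to $0 \ge z_f \log z_f$, which holds because $z_f = \int f\, dx \le 1$ in those cases. I therefore focus on the generic case $\eps_h \in [0,1)$ and $f \not\equiv 0$, writing $\tilde h := h^s - \eps_h \ge 0$ and $c := \int_0^\infty f\,dx \in (0,1]$.

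The key analytic step is that $\int \tilde h\, f^*\,dx = 1 - \eps_h > 0$, so $(1-\eps_h)^{-1}\tilde h\, f^*$ is a genuine probability density on $[0,\infty)$. Applying Jensen's inequality under this probability density to the convex function $\phi(y) = y\log y$ with $F := f/f^*$ yields
\[
\int_0^\infty \tilde h(x)\, f(x) \log\frac{f(x)}{f^*(x)}\, dx \;\ge\; (z_f - \eps_h c)\log\frac{z_f - \eps_h c}{1 - \eps_h}.
\]
It then remains to show that the scalar function $\psi(\epsilon) := (z_f - \epsilon c)\log((z_f - \epsilon c)/(1 - \epsilon))$ is nondecreasing on $[0, \min(1, z_f/c))$, since $\psi(0) = z_f\log z_f$ and the needed bound is $\psi(\eps_h) \ge z_f\log z_f$. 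A short calculation gives $\psi'(\epsilon) = r - c - c\log r$ with $r := (z_f - \epsilon c)/(1 - \epsilon)$; the one-variable function $g(r) := r - c - c\log r$ attains its minimum on $(0,\infty)$ at $r = c$, with value $g(c) = -c\log c \ge 0$ thanks to the constraint $c \le 1$.

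The main obstacle is precisely this monotonicity step, which is the quantitative sharpening of Jensen's inequality that produces the $\eps_h R(\mu^f\|\nu_*)$ correction. It hinges essentially on the sub-probability condition $c = \int f\,dx \le 1$; without it, $-c\log c$ could be strictly negative, the monotonicity of $\psi$ would fail, and only the weaker ($\eps_h = 0$) Jensen bound would survive. The remaining manipulations are routine.
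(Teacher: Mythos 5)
Your proof is correct in substance, but it takes a genuinely different route from the paper's. The paper's argument is a tangent-line (Bregman) computation: with $U(x)=x\log x$ and $V(x):=U(x)-[U'(z_f)(x-z_f)+U(z_f)]\ge 0$, the left side of \eqref{estimate} equals $\int_0^\iy V(f/f^*)\,h^s f^*\,dx$ because the affine part integrates to zero against $h^s f^*\,dx$ by the definition of $z_f$; the bound $h^s\ge\eps_h$ is then applied to this nonnegative integrand, and the resulting integral is expanded explicitly as $\eps_h\{R(\mu^f\|\nu_*)+c\,[z_f-\log z_f-1]+z_f(1-c)\}$ with $c:=\int_0^\iy f\,dx$, which dominates $\eps_h R(\mu^f\|\nu_*)$ via $x-\log x\ge 1$ and $c\le 1$. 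You instead peel off $\eps_h$ first, normalize the tilted density $(1-\eps_h)^{-1}(h^s-\eps_h)f^*$ (using $\int f^*\,dx=\int h^s f^*\,dx=1$), apply Jensen, and then prove monotonicity in $\epsilon$ of the scalar bound $\psi(\epsilon)$, with the sub-probability hypothesis entering through $-c\log c\ge 0$ rather than through $z_f(1-c)\ge 0$. Both proofs rest on the same ingredients (convexity of $x\log x$, the lower bound $h^s\ge\eps_h$, the normalizations of $f^*$, and $c\le 1$); the paper's is a one-shot computation needing no auxiliary calculus lemma, while yours makes transparent exactly what the $\eps_h$-correction to the plain Jensen bound costs. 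One small patch is needed at the right endpoint of your parameter range: since $z_f\ge\eps_h c$ always, $\eps_h$ lies in $[0,\min(1,z_f/c)]$ and may equal $z_f/c$ (namely when $h^s=\eps_h$ a.e.\ on $\{f>0\}$ with $\eps_h<1$), a case not covered by monotonicity on the half-open interval; there the Jensen step degenerates to $\int_0^\iy(h^s-\eps_h)f\log(f/f^*)\,dx\ge 0$, which still suffices because $z_f=\eps_h c\le 1$ gives $z_f\log z_f\le 0$ (alternatively, extend $\psi$ by continuity to the endpoint). With that one-line addition, and the usual convention that both sides of \eqref{estimate} are $+\iy$ when $R(\mu^f\|\nu_*)=+\iy$, your argument is complete.
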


The proof of Theorem \ref{th-main2}(2) is somewhat involved and given in Section \ref{subsub-pfsuper}.  
To help make some of those calculations more transparent, first in Section \ref{subsub-formal} we carry
out some  formal calculations (under more stringent conditions)
to provide intuition into why the extended relative entropy functional  $R(\cdot\|\nu^*)$ may be a good candidate  
Lyapunov function for the problem at hand (see also Remark \ref{rem-pde}).

\subsubsection{A Formal Calculation}
\label{subsub-formal}

 Observe that  equation \eqref{eq-ftmeas} characterizes  $(\nu_t)_{t \geq 0}$ as a weak solution to a transport equation.
Now,  for the purposes of this formal calculation only, suppose that $\nu_0$ has a density,
  denoted by $\tpsi_0$, and for each $t > 0$, suppose the measure $\nu_t$ has a sufficiently   smooth
  density,   denoted by $\tpsi(x,t)$, $x\geq 0$.   For conciseness, below
  we will use $f(\cdot, t)$ to denote the function $x \mapsto f(x,t)$. 
  Then  by \eqref{def-q} and \eqref{cond-radon},
  $\langle \f1, \nu_t\rangle 
   = \int_0^\infty \tpsi(\cdot,t) dx$   and $\langle h, \nu_t \rangle = \int_0^\infty h^s \tpsi(\cdot,t) dx$, 
the transport equation could be formally rewritten as the following 
partial differential equation (PDE):   
\begin{equation}\label{11}
\pl_t \tpsi(x,t)=-\pl_x \tpsi(x,t)-h^s(x)\tpsi(x,t), \quad x >0, t >0, 
\end{equation}
with the boundary condition $\tpsi(0,t) = K^\prime(t)$, which by \eqref{fkprime}, takes the form 
\begin{equation}\label{12}
\tpsi(0,t)=\begin{cases}
\la & \text{if }  \displaystyle \int_0^\infty \tpsi (\cdot,t)dx<1,\\[0.9em]
\displaystyle \int_0^\infty h^s\tpsi(\cdot,t)dx & \text{if } \displaystyle  \int_0^\infty \tpsi (\cdot,t)dx=1,
\end{cases}
\end{equation}
and the initial condition
\begin{equation}\label{13}
\tpsi (x,0)=\tpsi_0(x), \quad x  > 0. 
\end{equation}
Proceeding with  purely formal calculations to gain intuition,  note that  $f^*=e^{-J}$, where 
$J(x) :=\int_0^xh^s(y)dy < \infty$ for every $x > 0$. 
For $t > 0$, define 
\[
r_t := R(\nu_t \|\nu^*) 
= \int_0^\iy \tpsi(\cdot, t)\log\frac{\tpsi(\cdot, t)}{f^*}dx=\int_0^\iy \tpsi (\cdot, t)(\log \tpsi(\cdot, t)+J)dx.
\]
Taking derivatives of both sides of the last equation with respect to $t$, and  using \eqref{11}, we see that 
\begin{align*}
\frac{d}{dt}r_t
&= \int_0^\iy \partial_t \tpsi (\cdot, t) (\log \tpsi(\cdot, t)+J +1)dx
\\
&=-\int_0^\iy(\partial_x \tpsi(\cdot, t) +h^s \tpsi(\cdot, t))(\log \tpsi(\cdot, t)+J +1)dx.
\end{align*}
Since $\tpsi(\cdot,t)$ is integrable and $H^s =\infty$, it follows that $\liminf_{x \rightarrow \infty} \tpsi(x,t) = 0$. 
Using integration by parts, and assuming (without justification)  that 
$\lim_{x \rightarrow \infty} \tpsi(x,t)(\log \tpsi(x,t)+J(x)) = 0$, we conclude that 
\begin{align*} 
  \int_0^\iy(\partial_x \tpsi(\cdot,t) (\log \tpsi(\cdot,t)+J +1)dx 
& =   -\tpsi(0,t)(\log \tpsi(0,t)+1)-\int_0^\iy \tpsi(\cdot,t)\left(\frac{\partial_x \tpsi(\cdot,t)}{\tpsi(\cdot,t)}+h^s\right)dx\\
&= -\tpsi(0,t)\log \tpsi(0,t)-\int_0^\iy h^s \tpsi(\cdot,t)dx,
\end{align*}
On combining  the last   two equations, and recalling that $J = -\log f^*$, we obtain 
\begin{align*}
  \frac{d}{dt}r_t
 &= \tpsi(0,t)\log \tpsi(0,t)-\int_0^\iy h^s \tpsi(\cdot,t)(\log \tpsi(\cdot,t)+J)dx\\
&= \tpsi(0,t)\log \tpsi(0,t)-\int_0^\iy h^s\tpsi(\cdot,t)\log\frac{\tpsi(\cdot,t)}{f^*}dx.
\end{align*}
Since $\int_0^\infty \tpsi(\cdot,t) dx   =  \langle \f1, \nu_t \rangle \leq 1$ and for almost every $t \in [0,\infty)$,
  \eqref{cond-radon} implies that $\int_0^\infty h f(\cdot, t) dx <  \infty$  for such $t$. 
we can apply the estimate \eqref{estimate} from Lemma \ref{lem1} with $f  = \tpsi(\cdot,t)$ to obtain 
\[
\int_0^\iy h^s \tpsi(\cdot,t)\log\frac{\tpsi(\cdot,t)}{f^*}dx\ge \left(\int_0^\iy h^s\tpsi(\cdot,t)dx\right)\log\left(\int_0^\iy h^s\tpsi(\cdot,t)dx\right)
+\eps_h\int_0^\iy \tpsi(\cdot,t)\log\frac{\tpsi(\cdot,t)}{f^*}dx.
\] 
Substituting this into the previous display and using the boundary condition \eqref{12}, we  have 
\begin{equation}\label{15}
\frac{d}{dt}r_t\le\begin{cases}
-\eps_hr_t+\la\log\la-\left(\int_0^\iy h^s\tpsi(\cdot,t) dx\right)\log\left(\int_0^\iy h^s\tpsi(\cdot,t) dx\right) & \text{if } \int_0^\iy \tpsi(\cdot,t) dx<1,\\
-\eps_hr_t & \text{if } \int_0^\iy \tpsi(\cdot,t) dx=1.
\end{cases}
\end{equation}
This estimate does not directly imply the convergence of $r_t$ to zero.
However, the fact that it takes the form $\frac{dr_t}{dt}\le -\eps_hr_t$ in the  case $\int_0^\iy \tpsi(\cdot,t) dx=1$ 
is a sign that the approach might be useful, especially in the supercritical case ($\lambda > 1$), where
one might expect that for sufficiently large $t$,  $\int_0^\infty f(\cdot,t)dx  = 1$.
However, translating this intuition into a proof is not straightforward. 
The rigorous argument provided in the next section indeed derives a version of \eqref{15}
(with some extra error terms), and copes with the more complicated
structure of the estimate in the case $\int_0^\iy \tpsi (\cdot, t) dx<1$, as well as the fact that $r_t$
can go negative.

\begin{remark}
  \label{rem-pde}
  {\em 
Note that the PDE  \eqref{11}-\eqref{12} has some similarities with the age-structured model in equation (3) of \cite{MicMisPer04}, with
$\nu = 0$ and $d = b= h^s$, except that the boundary condition  \eqref{12} is   more complicated.  In particular, it is
discontinuous due to the appearance of the  term $\la$ when  $\int_0^\infty \tpsi (\cdot,t)dx<1$.   Furthermore,
although
$f^*$ can indeed be seen as an eigenfunction corresponding to the eigenvalue $0$ of the stationary equation
(which corresponds to equation (7) of \cite{MicMisPer04}, 
 again with $\nu =  0$ and $d = b= h^s$), 
 since the hazard rate function $h^s$ is never integrable on $[0,\infty)$,  the 
   solution to the dual equation (see (8) of \cite{MicMisPer04}) appears  not to be well-defined.
   Thus,    the results of \cite{MicMisPer04} are not  applicable to this setting. 
Furthermore, a rigorous proof cannot in any case rely on an analysis of the PDE because
for general initial condition $\nu_0 \in {\mathcal M}_F[0,\infty)$, the measures $\nu_t, t > 0,$ need not have
densities, and even when they do, their densities have discontinuities in both variables
(these discontinuities will be apparent in the rigorous proof in  the  next section). 
Nevertheless, along with the calculations given above, this loose analogy  further suggests that  the extended relative entropy functional
   may still serve as a Lyapunov function for the dynamics.  That verification of this property is non-trivial
   will be apparent on noting that  it requires additional conditions on $h^s$ and also a
   restriction to the supercritical regime $\lambda > 1$.  In particular, it would be interesting
   to see if the  argument presented in the next section,  or a modification thereof, could
relax conditions on $h^s$ to address a larger class of service distributions, 
and  also    address the critical regime $\lambda = 1$, which currently we only address when the
hazard rate function is decreasing (see Proposition \ref{prop-key}). 
}
\end{remark}

\subsubsection{Proof of Theorem \ref{th-main2}(2)}
\label{subsub-pfsuper}

Fix $\lambda> 1$ and recall the initial condition and associated solution $(X,\nu,\eta)$ to the fluid equations. 
We start with the proof of part (a).   First, note that the  limit $\eta_t \Rightarrow \lambda \eta_*$
in \eqref{conv-1} follows from Lemma \ref{lem-reneg}. 
To establish the remaining limits, we begin with the  representation for the age measure $\nu_t$ given in
\eqref{f5}, which shows that  $\nu_t = \newdel_t + \mu_t$, where  
$\newdel_t, \mu_t \in {\mathcal M}_F[0,\infty)$, are defined by 
\begin{equation}\label{01} 
  \langle \fsp, \newdel_t \rangle := \int_{[0,\iy)}\frac{\bar G^s(x+t)}{\bar G^s(x)}
    \fsp (x+t)\nu_0(dx) \quad \mbox{ and } \quad 
    \langle \fsp, \mu_t \rangle :=  \int_0^\infty \fsp(x) \ttpsi(x,t) dx, 
\end{equation} 
for every $\fsp \in {\mathcal C}_b[0,\infty)$ and $\fsp  = h^s$,  where for all $t \geq 0$,
    \begin{equation}\label{14-}
\ttpsi(x,t) :=\begin{cases}\bar G^s(x)k_{t-x} & x\in[0,t],
\\ 0 & x\in(t,\iy), 
\end{cases}
\end{equation}
where we recall that $k$, defined in \eqref{fkprime}, is a.e.\ equal to the derivative $K'$ of $K$.

Now, to estimate $d_{\text{TV}} (\mu_t, \nu_*)$, 
recall that  $f^*=\bar G^s$ is the density of $\nu^*$, and so  
 both $\mu_t$ and $\nu_*$ are absolutely continuous with respect to Lebesgue measure. 
Thus, Lemma \ref{lem-Pinsker} shows that
\begin{equation}
  \label{ineq-Pinsker}
  d_{{\rm TV}} (\mu_t, \nu_*) = \int_0^\infty \left| \ttpsi(x,t) - f^*(x) \right| dx \leq
|\langle \f1, \mu_t \rangle - 1|
+ \left(2\langle \f1, \mu_t \rangle^{-1}|r_t| + 2|\log \langle \f1, \mu_t \rangle|\right)^{1/2},
\end{equation}
where,  for $t \geq 0$, 
\begin{eqnarray}
  \label{def-rt}
  r_t := R(\mu_t\|\nu^*) &= & \int_0^\iy \ttpsi(x,t)\log\frac{\ttpsi(x,t)}{f^*(x)}dx \\
   \label{def-rt2}
&=&\int_0^t\bar G^s(t-x)k_{x}\log k_{x}dx, 
\end{eqnarray}
with the last equality using the fact that $k_{t-x} = \ttpsi(x,t)/f^*(x)$ due to 
\eqref{14-}.   Since the expression  in \eqref{fkprime} and Assumption \ref{ass:a}(\ref{ass:a1})
show that $k$ is strictly
positive and bounded above by $\lambda \vee c_h = \lambda\vee\sup_{x \in [0,\infty)} h(x)$, $r_t$ is well defined and finite.

 \begin{remark}
   \label{rem-theta}
 {\em  Due to the pointwise convergence $\frac{\bar G^s(x+t)}{\bar G^s(x)}\to0$ as $t\to\iy$,
the dominated convergence theorem shows that $\langle \f1, \newdel_t\rangle$, the total mass of $\newdel_t$, converges to zero
as $t\to\iy$. Hence, $\newdel_t$ converges to the zero measure in total variation. 
Together with \eqref{ineq-Pinsker}, it follows that in order to show 
$B_t = \langle \f1, \nu_t \rangle \to 1$ and 
$d_{\text{TV}} (\nu_t, \nu_*) \to 0$ (and hence, $\nu_t \Rightarrow \nu_*$)  as $t \to \iy$, 
it suffices to prove that  $\langle \f1, \mu_t \rangle \to 1$ and $r_t \to 0$ as $t \rightarrow \infty$. 
 }
 \end{remark}

Our main goal in this section is  to establish these limits. 

    \begin{proposition}
      \label{prop-superpf} 
      Suppose Assumptions \ref{ass-main} and \ref{ass:a}(\ref{ass:a1}) hold, and $\lambda > 1$.
      Then there exists $T \in (0, \infty)$ such that $B(t) = 1$ for all $t \geq T$.
      In addition, 
\begin{equation}\label{14}
\langle \f1, \mu_t \rangle \to 1 \quad \text{ and } \quad r_t\to 0, \quad \mbox{ as } t \to \infty,
\end{equation}
and also $\langle h, \nu_t \rangle \rightarrow 1$ as  $t \rightarrow \infty$. 
    \end{proposition}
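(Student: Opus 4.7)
The plan is to use the extended relative entropy $r_t=R(\mu_t\|\nu_*)$ from \eqref{def-rt} as a Lyapunov function for the dynamics of $\mu_t$, making rigorous the formal calculation of Section~\ref{subsub-formal}. Using the explicit density $\ttpsi(x,t)=\bar G^s(x)\,k_{t-x}\mathbf{1}_{[0,t]}(x)$ of $\mu_t$, the substitution $y=t-x$ yields $r_t=\int_0^t\bar G^s(t-y)\,k_y\log k_y\,dy$, which differentiates directly to $r_t'=k_t\log k_t-\int_0^t g^s(t-y)\,k_y\log k_y\,dy$. Recognizing the integral as $\int_0^\infty h^s\,\ttpsi\log(\ttpsi/f^*)\,dx$ and applying Lemma~\ref{lem1} with $f=\ttpsi(\cdot,t)$ (which satisfies $\int f\,dx=\langle\f1,\mu_t\rangle\le 1$) gives the key estimate
\[
r_t'\;\le\;-\eps_h\,r_t\;+\;k_t\log k_t\;-\;\langle h^s,\mu_t\rangle\log\langle h^s,\mu_t\rangle.
\]
Since \eqref{fkprime} and Assumption~\ref{ass:a}(\ref{ass:a1}) force $k_t$ into the compact interval $[\min(\lambda,\eps_h),\max(\lambda,c_h)]$, the error term is uniformly bounded, and $r_t$ is a priori bounded by Gr\"onwall.

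To exploit this inequality, I would reduce the error term to something vanishing, which requires showing $B(t)=1$ for all $t$ past some finite $T$. A first observation is that every connected component of $\{t:B(t)<1\}$ has length at most a finite constant $T^\ast$: on any interval $(a,t]$ with $B(s)<1$, \eqref{fkprime} gives $k_s=\lambda$, so \eqref{14-} together with the dominated-convergence vanishing of $\langle\f1,\newdel_t\rangle$ (Remark~\ref{rem-theta}) yields $B(t)\ge\lambda\int_0^{t-a}\bar G^s(u)\,du+o(1)$, which exceeds $1$ once $t-a$ is sufficiently large (using $\lambda>1$ and $\int_0^\infty\bar G^s=1$). Second, on any interval where $B(t)=1$ the identity $k_t=\langle h^s,\nu_t\rangle$ holds by \eqref{fkprime} in both cases $Q(t)>0$ and $Q(t)=0$; combined with $\langle h^s,\newdel_t\rangle\le c_h\langle\f1,\newdel_t\rangle\to 0$ and the Lipschitz property of $u\log u$ on the compact range of $k_t$, the error term is $o(1)$, so $r_t'\le-\eps_h r_t+o(1)$ on $\{B=1\}$. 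The bootstrapping argument then runs as follows: if there were infinitely many excursions $(a_n,b_n)$, the intervening intervals of $\{B=1\}$ would drive $r_t$ to $0$ along the subsequence $\{a_n\}$ (up to an $o(1)$ correction built up over excursions of length bounded by $T^\ast$); but $X(a_n)=1$ with $X$ about to decrease forces $\langle h^s,\nu_{a_n}\rangle\ge\lambda>1$, whereas Pinsker (Lemma~\ref{lem-Pinsker}) combined with $r_{a_n}\to 0$, $\langle\f1,\mu_{a_n}\rangle\to 1$, and boundedness of $h^s$ forces $\langle h^s,\nu_{a_n}\rangle\to 1$, a contradiction. Hence only finitely many excursions occur, and $T$ can be taken as the right endpoint of the last one.

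With $B(t)=1$ for all $t\ge T$, the differential inequality collapses to $r_t'\le-\eps_h r_t+o(1)$, whence $r_t\to 0$. Lemma~\ref{lem-Pinsker} together with $\langle\f1,\mu_t\rangle\to 1$ (which follows from $B(t)=1$ and $\langle\f1,\newdel_t\rangle\to 0$) yields $d_{\text{TV}}(\mu_t,\nu_*)\to 0$; combined with $\langle h^s,\newdel_t\rangle\to 0$ and the boundedness of $h^s$, this delivers $\langle h^s,\nu_t\rangle\to\langle h^s,\nu_*\rangle=1$. The principal obstacle throughout is an intrinsic circularity: the Lyapunov estimate yields useful decay only on $\{B=1\}$, yet stability of $\{B=1\}$ requires $r_t$ to already be small enough that $\langle h^s,\nu_t\rangle<\lambda$. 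Breaking this loop needs both the uniform excursion-length bound $T^\ast$ and careful tracking of $r_t$ across alternating excursions and quiet periods, together with strict use of $\lambda>1$.
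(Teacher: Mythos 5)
Your proposal follows the same skeleton as the paper's proof (the entropy functional $r_t$, the differential inequality obtained from Lemma \ref{lem1}, and an endgame that combines Pinsker with a contradiction at the onset of an excursion), but the pivotal step --- that the cumulative contribution of excursions to $r_t$ is only an ``$o(1)$ correction'' --- is asserted rather than proved, and this is exactly where the real difficulty sits. On an excursion $k_t=\la$, so the boundary error in your inequality is $\la\log\la-\lan h^s,\mu_t\ran\log\lan h^s,\mu_t\ran$, which is of order one whenever $\lan h^s,\nu_t\ran$ drifts away from $\la$ (e.g.\ down toward $1$); the only a priori controls available (the Lipschitz constant of $t\mapsto r_t$, or the uniform bound on the error multiplied by the excursion length $T^\ast$) give an increase of a fixed constant per excursion, not $o(1)$. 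Since the busy stretches separating excursions may a priori be arbitrarily short, a constant gain per excursion cannot be absorbed by the decay $-\eps_h r_t$, and $r_{a_n}\to0$ does not follow from what you have written. The paper closes precisely this hole in Lemma \ref{lem-rtprops}, estimate \eqref{08}: over a complete excursion $(t_1,t_2)$ one has $B(t_1)=B(t_2)=1$ and $B'=\la-\lan h^s,\nu_t\ran$ (equation \eqref{1:1}), hence the time average of $\lan h^s,\nu_t\ran$ over the excursion equals $\la$ exactly (equation \eqref{123}); convexity of $x\mapsto x\log x$ then shows that the integrated boundary error over the excursion is at most $\int\neweta_t\,dt$, where $\neweta_t=\moc(\lan h^s,\newdel_t\ran)$ is integrable over all time (Lemma \ref{lem2} together with \eqref{120}), so complete excursions are asymptotically harmless. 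One must then also splice these bounds across the possibly countably many alternating busy/excursion intervals, which is the content of the induction in Corollary \ref{cor-lem3}. Without some version of this averaging-plus-convexity argument your bootstrap does not close.

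Two further points. First, your uniform excursion-length bound $T^\ast$ does not by itself give that the cumulative busy time $\TT(t)=\int_0^t 1_{\{B(s)=1\}}ds$ tends to infinity: infinitely many excursions of length at most $T^\ast$ are consistent with $\{B=1\}$ having arbitrarily small (even zero) Lebesgue measure beyond some time, in which case the decay factor $e^{-\eps_h(\TT(t)-\TT(s))}$ never takes effect. The paper's Step 3 supplies the needed statement by a quantitative version of essentially your own computation: $\lan \f1,\mu_t\ran\ge\la\int_0^t\bar G^s(x)dx-\la\int_0^t\bar G^s(x)1_{\{B(t-x)=1\}}dx$, so $\sup_t\TT(t)<\infty$ would force $\liminf_t\lan\f1,\mu_t\ran\ge\la>1$, a contradiction; you should replace (or supplement) the $T^\ast$ bound by this. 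Second, a minor repair: the inequality $\lan h^s,\nu_{a_n}\ran\ge\la$ holds only at times just inside the excursion, not necessarily at $a_n$ itself, so the contradiction must be run at nearby interior times using the Lipschitz continuity of $r_t$ and of $t\mapsto\lan h^s,\mu_t\ran$ (compare the paper's Step 4, which localizes at $\tau=\sup\{t<T':B(t)=1\}$ and shows $\lan h^s,\nu_{\tau+t}\ran\le\la-\eps_0$ on a short interval). Your final deduction of $\lan h^s,\nu_t\ran\to1$ from total-variation convergence is correct and in fact bypasses the paper's appeal to the key renewal theorem, but it lies downstream of the gap described above.
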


    To establish this proposition, we proceed in several steps, establishing various intermediate results 
    in Steps 1--3,  culminating in the proof of Proposition \ref{prop-superpf} in Step 4.   

  \paragraph{{\bf Step 1.}}  We start with simple bounds on the measure $\theta_t$ defined in  \eqref{01}. Recall  the convention $0\log0=0$. 

\begin{lemma}\label{lem2} We have 
$\sup_t \langle h^s, \newdel_t \rangle \le c_h$,
$\int_0^\iy\lan h^s,\newdel_t\ran dt<\iy$, and
$\int_0^\iy|\lan h^s,\newdel_t\ran\log\lan h^s,\newdel_t\ran|dt<\iy$.
\end{lemma}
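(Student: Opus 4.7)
The plan is to rewrite $\lan h^s,\theta_t\ran$ in a form where the three bounds become transparent. Starting from the definition of $\theta_t$ in \eqref{01} and using $h^s\bar G^s=g^s$, we have
\[
\lan h^s,\theta_t\ran
=\int_{[0,H^s)}\frac{g^s(x+t)}{\bar G^s(x)}\nu_0(dx).
\]
The first bound is immediate: since $g^s=h^s\bar G^s\le c_h\bar G^s$ under Assumption \ref{ass:a}(\ref{ass:a1}), and $\bar G^s(x+t)/\bar G^s(x)\le 1$, one gets $\lan h^s,\theta_t\ran\le c_h\lan\f1,\nu_0\ran\le c_h$ (the last inequality because $(X(0),\nu_0,\eta_0)\in\newcspace$). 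The second bound follows from Tonelli:
\[
\int_0^\iy\lan h^s,\theta_t\ran\,dt
=\int_{[0,H^s)}\frac{1}{\bar G^s(x)}\Bigl(\int_0^\iy g^s(x+t)\,dt\Bigr)\nu_0(dx)
=\int_{[0,H^s)}\nu_0(dx)\le 1,
\]
where we used $\int_0^\iy g^s(x+t)\,dt=\bar G^s(x)$.

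The main work is the third bound, and the key ingredient is that under Assumption \ref{ass:a}(\ref{ass:a1}) the ratio $\bar G^s(x+t)/\bar G^s(x)$ decays exponentially uniformly in $x$. Specifically, because $\bar G^s(u)=\exp\bigl(-\int_0^u h^s(v)\,dv\bigr)$ and $h^s\ge\eps_h$ a.e., we have
\[
\frac{\bar G^s(x+t)}{\bar G^s(x)}=\exp\Bigl(-\int_x^{x+t}h^s(v)\,dv\Bigr)\le e^{-\eps_h t}, \qquad x\ge 0,\ t\ge 0,
\]
and therefore $\lan h^s,\theta_t\ran\le c_h e^{-\eps_h t}$ for every $t\ge 0$.

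With this exponential decay in hand, the log-integrability is routine. Set $y_t:=\lan h^s,\theta_t\ran$ and choose $T_0<\iy$ so that $c_h e^{-\eps_h t}\le 1/e$ for all $t\ge T_0$; on $[0,T_0]$ the function $t\mapsto|y_t\log y_t|$ is uniformly bounded by $\max(c_h|\log c_h|,e^{-1})$ (using $y_t\le c_h$ and the convention $0\log 0=0$), so its integral over $[0,T_0]$ is finite. On $[T_0,\iy)$ we have $y_t\in[0,1/e]$, and since $u\mapsto -u\log u$ is nondecreasing on $[0,1/e]$, the bound $y_t\le c_h e^{-\eps_h t}\le 1/e$ yields
\[
|y_t\log y_t|\le -c_h e^{-\eps_h t}\log\bigl(c_h e^{-\eps_h t}\bigr)=c_h e^{-\eps_h t}\bigl(\eps_h t-\log c_h\bigr),
\]
which is integrable on $[T_0,\iy)$.

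The only subtle point is the third bound, and the hard step there is recognizing that one should not attempt to control $\int|y_t\log y_t|\,dt$ by a crude Jensen-type estimate from $\int y_t\,dt\le 1$ alone (which in general is not sufficient for $y\log y$ integrability), but rather exploit the pointwise exponential decay that the lower bound $h^s\ge\eps_h$ affords, combined with the monotonicity of $u\mapsto-u\log u$ near the origin.
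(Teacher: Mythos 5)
Your proof is correct and follows essentially the same route as the paper's: the key step in both is the uniform exponential bound $\langle h^s,\newdel_t\rangle\le c_h e^{-\eps_h t}$, obtained from $\bar G^s(x+t)/\bar G^s(x)=\exp\bigl(-\int_x^{x+t}h^s\bigr)\le e^{-\eps_h t}$, from which the log-integrability follows (the paper leaves that last deduction implicit, while you spell it out, and you also get the plain integrability by a direct Tonelli computation rather than from the exponential bound — a minor, equally valid variation).
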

\begin{proof}
Recall that $\newdel_t = \nu_t - \mu_t$ is a nonnegative measure. Moreover,
substituting $\fsp = h^s$ in \eqref{01},  we have for each $t > 0$, 
\begin{align}\label{121}
\lan h^s,\newdel_t\ran&=\int_{[0,\iy)}\frac{\bar G^s(x+t)h^s(x+t)}{\bar G^s(x)}\nu_0(dx).
\end{align}
For the first assertion, note that for all $t \geq 0$, 
$\langle h^s, \newdel_t   \rangle \leq c_h \langle \f1, \nu_t   \rangle \leq  c_h$.
The remaining claims will follow once we prove the following refinement of
this bound, namely, for all $t\ge0$,
\begin{equation}\label{122}
\lan h^s,\newdel_t\ran\le c_he^{-\eps_h t}.
\end{equation}
To see why this bound holds, first use the easily verifiable relation $\bar G^s (y) = e^{-\int_0^y h^s(u) du}$ and
the definition of $\eps_h$ to conclude that 
for all $x\ge0$ and $t\ge0$,
$\bar G^s(x+t) \le\bar G^s(x)e^{-\eps_ht}$.
When substituted into \eqref{121}, this yields 
\[
\lan h^s,\newdel_t\ran \le e^{-\eps_ht}\int_{[0,\iy)}h^s(x+t)\nu_0(dx)
\le c_he^{-\eps_ht}\lan\f1,\nu_0\ran\le c_he^{-\eps_ht}. 
\]
This proves \eqref{122} and completes the proof.
\end{proof}

\paragraph{{\bf Step 2.}}
We now obtain our main estimate on $r_t$ in Corollary \ref{cor-lem3}, building off preliminary estimates
obtained in Lemma \ref{lem-rtprops}. 
In what follows, we will say   $(t_1,t_2) \subset [0,\infty)$ is a {\it busy interval} if $B_t=1$ for $t \in (t_1,t_2)$,  and say it is an {\it excursion interval} if $B_t<1$ for $t \in (t_1,t_2)$ and $B_{t_1}=B_{t_2} =1$.

Let $\moc (\cdot)$ denote the modulus of continuity of
the continuous function $x \mapsto x\log x$  on the compact interval
$[0,c_h]$. 
On $[0,e^{-1}]$ this function is decreasing. Now, for $0\le x<y\le e^{-1}$,
applying the inequality $p\log p+(1-p)\log(1-p)\le0$ with  $p=x/y$, we see that 
\begin{align*}
 0  &\geq \frac{x}{y}\log\frac{x}{y}+\left(1-\frac{x}{y}\right)\log\left(1-\frac{x}{y} \right)
 =\frac{1}{y}[x\log x+(y-x)\log(y-x)-y\log y]. 
\end{align*}
Hence, it follows that for $0\le x<y\le e^{-1}$, 
\[
|x\log x-y\log y|=x\log x-y\log y\le(x-y)\log(y-x)
=|(x-y)\log(y-x)|.
\]
Moreover, in case $c_h>e^{-1}$,
the  function  $x \mapsto x\log x$  is Lipschitz on $[e^{-1},c_h]$. 
As a result, there is a constant $c_1$ (depending only
on $c_h$) such that
\begin{equation}\label{120}
\moc (x)\le |x\log x|+c_1x, \qquad x\in[0,c_h].
\end{equation}

\begin{lemma}
  \label{lem-rtprops}
     For $t \geq 0$, define $\neweta_t := \moc(\langle h^s, \newdel_t \rangle),$
where $\newdel_t$ is defined by \eqref{01}, and $\moc$ is the modulus of continuity of $x \mapsto x \log x$, as defined above. 
    If $(t_1,t_2)$ is a   busy interval, then 
\begin{equation}\label{07}
r_t\le r_{t_1} e^{-\eps_h(t-t_1)}+\int_{t_1}^t \neweta_s ds, \quad t\in (t_1, t_2).
\end{equation}
On the other hand, if $(t_1,t_2)$ is  an excursion, then 
\begin{equation}\label{08}
r_{t_2} \le r_{t_1} +\int_{t_1}^{t_2}\neweta_sds,
\end{equation}
and 
\begin{equation}\label{1:1}
B^\prime (t) =\la-\lan h^s,\nu_t\ran, \qquad t\in(t_1,t_2).
\end{equation} 
Furthermore,  there exist  finite positive  constants $c_r$ and $c_{\rm lip}$ such that
$\sup_t |r_t| \leq c_r$ and  for any $0 \leq s < t < \infty$, $|r_t - r_s| \leq c_{\rm lip}|t-s|$, showing that the function 
$t \to r_t$ is globally Lipschitz on $[0,\infty)$.  
\end{lemma}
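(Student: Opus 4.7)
The approach is to establish a pointwise differential inequality for $r_t$ and then specialize it on busy and excursion intervals; the simpler facts I would dispatch first. Global boundedness and Lipschitz continuity of $r_\cdot$ follow by direct manipulation of $r_t=\int_0^t\bar G^s(t-x)k_x\log k_x\,dx$: Assumption \ref{ass:a}(\ref{ass:a1}) and \eqref{fkprime} together with the convention $0\log0=0$ show $|k_x\log k_x|\le c_\phi$ for some $c_\phi<\iy$, whence $|r_t|\le c_\phi\int_0^\iy\bar G^s(u)\,du=c_\phi=:c_r$ by Assumption \ref{ass-main}. For $s<t$, splitting $r_t-r_s$ into $\int_s^t\bar G^s(t-x)k_x\log k_x\,dx$ plus $\int_0^s[\bar G^s(t-x)-\bar G^s(s-x)]k_x\log k_x\,dx$ and using $\int_0^{t-s}\bar G^s\le t-s$ on the first and $\int_0^\iy[\bar G^s(u)-\bar G^s(u+(t-s))]\,du=\int_0^{t-s}\bar G^s\le t-s$ on the second gives $|r_t-r_s|\le 2c_\phi(t-s)$. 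Equation \eqref{1:1} is immediate from the fluid equations: on an excursion, $B_s<1$, so \eqref{eq-fnonidling} and \eqref{fqfx} force $Q\equiv0$, hence $R'\equiv0$ by \eqref{fr}; combining $B=X$ (from \eqref{fq}) with $D'(s)=\lan h^s,\nu_s\ran$ and differentiating \eqref{eq-fx} yields $B'(s)=\la-\lan h^s,\nu_s\ran$.

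For the remaining two inequalities, I would derive a common pointwise bound on $r'_t$. Writing $\phi(x):=x\log x$, Leibniz's rule (justified by the local boundedness of $k$ together with $g^s\le c_h$ and dominated convergence) gives, for a.e.\ $t\ge0$,
\[r'_t=\phi(k_t)-\int_0^tg^s(t-x)\phi(k_x)\,dx=\phi(k_t)-\int_0^\iy h^s(y)\ttpsi(y,t)\log\frac{\ttpsi(y,t)}{f^*(y)}\,dy,\]
where the second equality is the change of variable $y=t-x$ together with $g^s=h^sf^*$ and \eqref{14-}. Since $\lan\f1,\mu_t\ran\le\lan\f1,\nu_t\ran\le1$, Lemma \ref{lem1} with $f=\ttpsi(\cdot,t)$ and $z_f=\lan h^s,\mu_t\ran$ yields
\[r'_t\le\phi(k_t)-\phi(\lan h^s,\mu_t\ran)-\eps_h r_t.\]
On a busy interval $k_t=\lan h^s,\nu_t\ran=\lan h^s,\mu_t\ran+\lan h^s,\newdel_t\ran$, so the modulus of continuity bound yields $\phi(k_t)-\phi(\lan h^s,\mu_t\ran)\le\moc(\lan h^s,\newdel_t\ran)=\neweta_t$, producing $r'_t\le-\eps_h r_t+\neweta_t$; integrating with the factor $e^{\eps_h t}$ and using $\neweta_s\ge0$ yields \eqref{07}.

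The excursion case is the main obstacle. Here $k_t=\la$, so the gap $\phi(k_t)-\phi(\lan h^s,\mu_t\ran)$ is no longer controlled pointwise by $\neweta_t$, and one cannot simply drop the term $-\eps_h r_t$ since $r_t$ may be negative. My plan is to weaken Lemma \ref{lem1} back to pure Jensen, namely $\int h^s\ttpsi\log(\ttpsi/f^*)\,dy\ge\phi(\lan h^s,\mu_t\ran)$, obtained by applying Jensen's inequality on the probability measure $g^s(y)\,dy=h^s(y)f^*(y)\,dy$, which discards the $\eps_h$ contraction. Combined with $\phi(\lan h^s,\mu_t\ran)\ge\phi(\lan h^s,\nu_t\ran)-\neweta_t$, this gives
\[r'_t\le\phi(\la)-\phi(\lan h^s,\nu_t\ran)+\neweta_t.\]
Integrating from $t_1$ to $t_2$ and invoking the mass-conservation identity $\int_{t_1}^{t_2}\lan h^s,\nu_s\ran\,ds=\la(t_2-t_1)$, obtained by integrating \eqref{1:1} with the boundary condition $B(t_1)=B(t_2)=1$, Jensen's inequality for convex $\phi$ gives $\int_{t_1}^{t_2}\phi(\lan h^s,\nu_s\ran)\,ds\ge(t_2-t_1)\phi(\la)$, so the first two terms integrate to at most zero, yielding \eqref{08}. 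The bulk of the technical effort should go into this excursion argument together with the rigorous a.e.\ differentiation of $r_t$; once the common differential inequality is in place, the busy-interval bound is a routine Gronwall estimate.
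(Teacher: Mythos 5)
Your proposal is correct and follows essentially the same route as the paper: the a.e.\ derivative formula for $r_t$, the application of Lemma \ref{lem1} together with the modulus-of-continuity comparison between $\langle h^s,\mu_t\rangle$ and $k_t$ (busy case) or $\langle h^s,\nu_t\rangle$ (excursion case), a Gronwall/comparison step for \eqref{07}, and the mass-balance identity from \eqref{1:1} combined with Jensen's inequality for $x\log x$ to get \eqref{08}. The only differences are cosmetic — you prove the Lipschitz bound by splitting the integral rather than bounding $dr_t/dt$, and derive \eqref{1:1} from the $X$-balance equation \eqref{eq-fx} instead of applying the test function $\f1$ in \eqref{eq-ftmeas} — both of which are equally valid.
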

\begin{proof}
  Note that 
  although the function $\ttpsi$  defined in \eqref{14-} is discontinuous in $t$ and in $x$,
  since $\bar{G}^s$ has a density, 
  the relation   \eqref{def-rt2} shows that 
$r_t$ is differentiable (although not continuously differentiable)  with derivative  
\begin{eqnarray}\label{03}
  \frac{dr_t}{dt} & = & k_t\log k_t-\int_0^tg^s(t-x)k_x\log k_xdx = k_t\log k_t-\int_0^tg^s(x)k_{t-x}\log k_{t-x}dx. 
\end{eqnarray}
Substituting the identities  $g^s= h^s \bar G^s = h^s f^*$ and
$k_{t-x} =  \ttpsi(x,t)/f^*(x)$ into \eqref{03},  recalling the definition of
$\ttpsi$ from \eqref{14-} recalling the convention that
$0 \log 0 = 0$, and then applying Lemma \ref{lem1}, with $f$ replaced with $\ttpsi(\cdot,t)$, we obtain 
\begin{align}
\notag
\frac{dr_t}{dt} &=k_t\log k_t-\int_0^\iy h^s(x)\ttpsi(x,t)\log\frac{\ttpsi(x,t)}{f^*(x)}dx\\ 
&\le k_t\log k_t-z_{\ttpsi(\cdot,t)}\log z_{\ttpsi(\cdot,t)}-\eps_h r_t^+,
\label{20}
\end{align}
where, as in Lemma \ref{lem1}, 
$z_{\ttpsi(\cdot,t)} = \int_0^\infty h^s(x) \ttpsi(x,t) dx$, which is  equal  to   $\langle h^s, \mu_t \rangle$  by \eqref{01}.

Now, suppose that  $(t_1,t_2)$ is a busy interval for some $0 \leq t_1 < t_2 \leq \infty$.  
Then by  \eqref{fkprime} and Assumption \ref{ass:a}(\ref{ass:a1}), for $t\in(t_1,t_2)$, 
$c_h \geq k_t=\lan h^s,\nu_t\ran = \langle h^s, \mu_t \rangle + \lan h^s,  \newdel_t \rangle$,  which implies 
$k_t-z_{\ttpsi(\cdot,t)}=\lan h^s,\newdel_t\ran\ge0$.
Since $\moc$ is the modulus of continuity of
$x\mapsto x\log x$ on the interval $[0,c_h]$,
it follows that
\[
|k_t\log k_t-z_{\ttpsi(\cdot,t)}\log z_{\ttpsi(\cdot,t)}|
\le  \moc (\langle h^s, \newdel_t \rangle) = \neweta_t. 
\]
When combined with \eqref{20}, this shows that
  for any  busy interval $(t_1,t_2)$, 
\begin{equation}\label{21}
  \frac{dr_t}{dt}\le \neweta_t-\eps_h r_t^+ \le  \neweta_t - \eps_h r_t, \quad t \in (t_1, t_2).
\end{equation}
Now, let $\tilde{r}$ denote the solution to the differential equation $d \tilde{r}_t/dt=\neweta_t-\eps_h\tilde{r}_t$
with the same initial condition as $r$, namely $\tilde{r}_{t_1}=r_{t_1}$.
Then $\tilde{r}$ can be solved explicitly: 
 \[
 \tilde{r}_t=\tilde{r}_{t_1}^{-\eps_h(t-t_1)}+\int_{t_1}^te^{-\eps_h (s-t_1)}\neweta_{t-s}ds\le
 r_{t_1} e^{-\eps_h(t-t_1)}+\int_{t_1}^t\neweta_sds,   \quad  t\in (t_1, t_2).
\]
A simple comparison theorem for ordinary differential equations then shows that $r_t \leq \tilde{r}_t$ for
$t \in (t_1,t_2)$. This proves \eqref{07}.

Next, consider an excursion interval $(t_1,t_2)$.
Then \eqref{fkprime} implies that $k_t=\la$ for $t \in (t_1,t_2)$.   
Moreover, it is not hard to see that the fluid age equation \eqref{eq-ftmeas} holds with the test function $\ph \equiv \f1$,
 by approximating this function by compactly supported test functions whose 
 derivatives in $x$ are bounded. Since $\varphi_x = \varphi_t = 0$, differentiating the equation yields \eqref{1:1}.
 Toward showing \eqref{08}, recall  that  $\langle \f1, \nu_t \rangle = B(t)$ and
 $B(t_1) = B(t_2) = 1$ by definition of an excursion interval. 
  Hence, it follows that 
\begin{equation}\label{123}
\frac{1}{t_2-t_1}\int_{t_1}^{t_2}\lan h^s,\nu_t\ran dt=\la. 
\end{equation}
Also, recalling $\newdel_t = \nu_t-\mu_t$, we have 
\[ |\lan h^s,\mu_t\ran\log\lan h^s,\mu_t\ran - \lan h^s,\nu_t\ran\log\lan h^s,\nu_t\ran|
\leq
\moc(\lan h^s,\nu_t\ran - \lan h^s,\mu_t\ran) =  \neweta_t.
\]
As a result, for $t\in(t_1,t_2)$, the right-hand side of \eqref{20} is bounded above
by $\la\log\la-\lan h^s,\nu_t\ran\log\lan h^s,\nu_t\ran+\neweta_t$.
Integrating both sides of \eqref{20} and using 
\eqref{123} and the convexity of $x \log x$, we have
\[
r_{t_2} - r_{t_1} \leq
(t_2-t_1)  \la\log\la - \int_{t_1}^{t_2} \left( \lan h^s,\nu_t\ran\log\lan h^s,\nu_t\ran -  \neweta_t \right) dt
\leq \int_{t_1}^{t_2} \neweta_t dt.
\]

We now turn to the last assertion of the lemma.
The bound $0\le k_t\le c_h \vee \lambda$ implies that
$|k_t\log k_t|\le c_2$ for some finite constant $c_2$.
The boundedness of $t \to r_t$ thus
follows from \eqref{def-rt2} and the fact that
$\int_0^\infty \bar{G}^s(x) dx  = 1$ (see Assumption \ref{ass-main}(1)).
By \eqref{03}, the bound on $|k_t\log k_t|$ also implies that $dr_t/dt$ is bounded, and hence, 
 that $t \mapsto r_t$  is globally Lipschitz on $[0,\infty)$.
    \end{proof}

 As a corollary, we  obtain our main estimate on $r_t$.  For $t > 0$, define 
\begin{equation}
    \label{def-Tt}
    \TT (t) :=\int_0^t1_{\{B(s)=1\}}ds, \quad t > 0, \quad \mbox{ and } \quad \calB :=\{t > 0:B(t)=1\}. 
\end{equation}

\begin{corollary}\label{cor-lem3}
   For every $s \geq 0$ and  $t>s$, $t\in\calB$, 
      \begin{equation}
        \label{ineq-lem3}
r_t\le c_re^{-\eps_h (\TT(t)-\TT(s))}+\int_s^t\neweta_\tau d\tau, 
      \end{equation}
      where $c_r$ is the  constant from Lemma \ref{lem-rtprops}. 
\end{corollary}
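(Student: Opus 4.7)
The plan is to introduce the auxiliary function $\psi(\tau) := r_\tau e^{\eps_h(\TT(\tau)-\TT(s))}$, which is absolutely continuous (in fact Lipschitz) as a product of Lipschitz functions by Lemma \ref{lem-rtprops} and the trivial $1$-Lipschitz property of $\TT$. Under the additional assumption $s \in \calB$, I would first establish the Gronwall-type bound
\[
\psi(t) - \psi(s) \le \int_s^t e^{\eps_h(\TT(\tau)-\TT(s))} \neweta_\tau \, d\tau.
\]
Rearranging and bounding $e^{-\eps_h(\TT(t)-\TT(\tau))} \le 1$ in the resulting integrand yields $r_t \le r_s e^{-\eps_h(\TT(t)-\TT(s))} + \int_s^t \neweta_\tau \, d\tau$, and then $r_s \le c_r$ delivers \eqref{ineq-lem3} in this case.

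To prove the Gronwall bound I would decompose $[s, t] = (\calB \cap [s,t]) \cup \bigcup_k (a_k, b_k)$, where the $(a_k, b_k)$ are the (at most countably many) open intervals forming the complement of $\calB$ in $[s,t]$; continuity of $B$ together with $s, t \in \calB$ ensures each $(a_k, b_k)$ is an excursion in the sense of Lemma \ref{lem-rtprops}. On each such excursion $\TT$ is constant equal to $\TT(a_k)$, and applying \eqref{08} gives
\[
\psi(b_k) - \psi(a_k) = (r_{b_k} - r_{a_k}) e^{\eps_h(\TT(a_k)-\TT(s))} \le \int_{a_k}^{b_k} e^{\eps_h(\TT(\tau)-\TT(s))} \neweta_\tau \, d\tau.
\]
On $\calB$, Theorem \ref{th-fluid}(iii) yields directly that $k_\tau = \lan h^s, \nu_\tau\ran$ for a.e.\ $\tau$ (in both sub-cases $\fq(\tau) = 0$ and $\fq(\tau) > 0$), hence $k_\tau - z_{\ttpsi(\cdot,\tau)} = \lan h^s, \newdel_\tau\ran$ a.e.\ on $\calB$. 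Combining this with the pointwise inequality \eqref{20}, the definition $\neweta_\tau = \moc(\lan h^s, \newdel_\tau\ran)$, and $r_\tau^+ \ge r_\tau$ yields $r'_\tau + \eps_h r_\tau \le \neweta_\tau$ a.e.\ on $\calB$, which together with $\TT'(\tau) = 1$ a.e.\ on $\calB$ gives the pointwise bound $\psi'(\tau) \le e^{\eps_h(\TT(\tau)-\TT(s))} \neweta_\tau$ a.e.\ on $\calB$. Writing $\psi(t) - \psi(s) = \int_s^t \psi'(\tau) d\tau$ and splitting this integral into contributions from $\calB$ and from the excursions, and bounding each piece by the corresponding part of the right-hand side, completes the proof of the Gronwall bound.

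For general $s \ge 0$, set $\tau^* := \inf\{\tau \in [s,t] : \tau \in \calB\}$. Continuity of $B$ and $t \in \calB$ force $\tau^* \in \calB$ and $\tau^* \le t$, while $B < 1$ on $[s, \tau^*)$ gives $\TT(\tau^*) = \TT(s)$. Applying the bound just proved to $[\tau^*, t]$, and using $r_{\tau^*} \le c_r$ together with $\neweta \ge 0$, yields \eqref{ineq-lem3}.

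The main obstacle is that the excursion estimate \eqref{08} furnishes only an integrated bound on the increments of $r$ rather than a pointwise bound on $r'$, so one cannot directly control $\psi'$ on the excursions; complementarily, the closed set $\calB$ need not be a finite union of intervals, so the busy-interval estimate \eqref{21} does not automatically extend to $\calB$ by elementary means. The decomposition above resolves both issues: the integrated excursion bound is aggregated naturally over each $(a_k, b_k)$ via $\int_{a_k}^{b_k} \psi' = \psi(b_k) - \psi(a_k)$, while the identity $k_\tau = \lan h^s, \nu_\tau\ran$ provided by Theorem \ref{th-fluid}(iii) holds on \emph{all} of $\calB$, not merely in the interior of individual busy intervals, allowing \eqref{20} to deliver the pointwise bound on $\psi'$ a.e.\ on $\calB$ irrespective of its topological structure.
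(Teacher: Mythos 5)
Your proposal is correct, but it follows a genuinely different route from the paper. The paper discretizes $(t_0,t)$ into $u$-admissible intervals (maximal busy intervals and excursions), proves the estimate \eqref{09} by induction over the finitely many such intervals, controls the leftover gaps $\calT_u$ via the global Lipschitz constant $c_{\rm lip}$ of $r$, and then lets $u\to0$. You instead run a continuous integrating-factor (Gronwall) argument with the busy-time clock: you set $\psi(\tau)=r_\tau e^{\eps_h(\TT(\tau)-\TT(s))}$, observe that the differential inequality behind \eqref{21} in fact holds a.e.\ on \emph{all} of $\calB$ (since \eqref{fkprime} gives $k_\tau=\lan h^s,\nu_\tau\ran$ a.e.\ whenever $B(\tau)=1$, so the step from \eqref{20} does not require $\tau$ to lie in the interior of a busy interval), note $\TT'=1$ a.e.\ on $\calB$ and $\TT$ constant on each excursion, and absorb the excursions through the integrated bound \eqref{08} applied to the components $(a_k,b_k)$ of the complement of $\calB$ via $\int_{a_k}^{b_k}\psi'=\psi(b_k)-\psi(a_k)$. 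This decomposition of $\int_s^t\psi'$ is legitimate because $\psi$ is Lipschitz (hence absolutely continuous with bounded derivative) and the endpoints of the components carry $B=1$, so each component is an excursion in the sense of Lemma \ref{lem-rtprops}; the reduction from general $s$ to $\tau^*=\inf\{\tau\ge s:\tau\in\calB\}$, using $\TT(\tau^*)=\TT(s)$ and $r_{\tau^*}\le c_r$, mirrors the paper's passage from $s$ to $t_0$ (with the trivial degenerate case $\tau^*=t$ handled by boundedness of $r$). What your approach buys is the elimination of the induction, the counting of $u$-admissible intervals and the limit $u\to0$, and the Lipschitz constant $c_{\rm lip}$ disappears from the final estimate (Lipschitz continuity of $r$ is used only to justify absolute continuity of $\psi$); it even yields the slightly sharper intermediate bound $r_t\le r_s e^{-\eps_h(\TT(t)-\TT(s))}+\int_s^t e^{-\eps_h(\TT(t)-\TT(\tau))}\neweta_\tau\,d\tau$ before relaxing to \eqref{ineq-lem3}. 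The paper's discretization, by contrast, only needs the interval-wise statements \eqref{07} and \eqref{08} as black boxes and never invokes the validity of the differential inequality a.e.\ on the closed set $\calB$.
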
 
\begin{proof}  
Fix $s \geq 0$ and $t > s$, $t \in\calB$.
Denote $t_0 :=\inf\{u \ge s:B_u =1\}$.
Fix a nonempty open interval $(s_0,s_1)\subset(t_0,t)$.
Then $(s_0,s_1)$ is said to be a {\it maximal} busy interval if it is a busy interval that is not a proper subset of
any open busy interval contained in $(t_0,t)$. 
Further,  $(s_0,s_1)\subset(t_0,t)$
is referred to as {\it admissible} if it is either an excursion
or a maximal busy interval.  
Since $B$ is continuous it is clear that ${\mathcal O} :=  \{u \in (s,t): B_u < 1\}$ is an open set, and hence, 
  can be written as a countable union of open intervals. Thus, there are at most a countable number of
  excursions.  Since any maximal busy interval must be contiguous to one of the intervals comprising ${\mathcal O}$,
  it follows that   the collection of admissible intervals is also countable. 
For $u>0$, define a {\it $u$-admissible} interval to be  an admissible interval
whose length is at least $u$. Denote by $\calT_u$ the complement in $(t_0,t)$ of the
union of all $u$-admissible intervals. Then, as $u\to0$,
the Lebesgue measure $|\calT_u|$ of this set clearly converges to zero.

Let $u>0$ be given, and let $ \numb_u$ be the number of $u$-admissible intervals. 
Since there are only a finite number of such intervals, we can label the intervals 
$(t_n,t_n')$, $n=1,\ldots, \numb_u$  in such a way
that $s\le t_0\le t_1<t_2<t_3<\cdots<t_{\numb_u}\le t$.
Let $c_{\rm lip}$ denote the (global) Lipschitz constant of $t \mapsto r_t$,
which exists by Lemma \ref{lem-rtprops}. 
We now show by induction that, for $n=1,2,\ldots,\numb_u$,
\begin{equation}\label{09}
r_{t_n}\le r_{t_1}e^{-\eps_h\sum_{i=1}^{ n-1}(\TT(t_i')-\TT(t_i))}
+\sum_{i=1}^{n-1}\int_{t_i}^{t_i'}\eta_\tau d\tau+c_{\rm lip}\sum_{i=1}^{n-1}(t_{i+1}-t'_i),
\end{equation}
 where a sum with the upper limit less than the lower limit is taken to be zero.

\noindent 
    {\em Base Case:} For $n=1$,  \eqref{09}  reduces to the trivial  inequality $r_{t_1} \leq r_{t_1}$, and thus is satisfied.

    \noindent 
{\em Induction step:} Assuming \eqref{09} holds 
for an arbitrary $n \in \{1, \ldots, \numb_u-1\}$, we show it holds for $n+1$.
From \eqref{07} and \eqref{08} of Lemma \ref{lem-rtprops},
along with the fact that  $\TT(t_n')-\TT(t_n)$  is equal to zero if  $(t_n,t_n')$ is an excursion, and is equal to $t_n'-t_n$ if it is a busy interval, we have
\[
r_{t_n'}\le r_{t_n}e^{-\eps_h(\TT(t_n')-\TT(t_n))}+\int_{t_n}^{t_n'}\neweta_\tau d\tau. 
\]
Using this estimate, the Lipschitz continuity of
$r_t$ established in Lemma \ref{lem-rtprops} and the induction hypothesis,
it follows that 
\begin{align*}
r_{t_{n+1}}
&\le r_{t_n'}+c_{\rm lip}(t_{n+1}-t_n')
\\
&\le
r_{t_n}e^{-\eps_h(\TT(t_n')-\TT(t_n))}+\int_{t_n}^{t'_n}\neweta_\tau d\tau+c_{\rm lip}(t_{n+1}-t'_n)
\\
&\le
\Big(
r_{t_1}e^{-\eps_h\sum_{i=1}^{n-1}(\TT(t_i')-\TT(t_i))}
+\sum_{i=1}^{n-1}\int_{t_i}^{t_i'}\neweta_\tau d\tau+c_{\rm lip}\sum_{i=1}^{n-1}(t_{i+1}-t'_i)
\Big)
e^{-\eps_h(\TT(t_n')-\TT(t_n))}
\\
&\qquad +\int_{t_n}^{t_n'}\neweta_\tau d\tau+c_{\rm lip}(t_{n+1}-t'_n)
\\
&\le
r_{t_1}e^{-\eps_h\sum_{i=1}^{n}(\TT(t_i')-\TT(t_i))}
+\sum_{i=1}^{n}\int_{t_i}^{t_i'}\neweta_\tau d\tau+c_{\rm lip}\sum_{i=1}^{n}(t_{i+1}-t'_i).
\end{align*}
This proves \eqref{09} by induction.

Next, note that each of the intervals $(t_0,t_1)$, $(t'_i,t_{i+1})$ for $i=1,\ldots, \numb_u-1$,
and $(t_{\numb_u},t)$, is a subset of $\calT_u$. Hence, we have 
\[
\TT(t)-\TT(t_0)-\sum_{i=1}^{\numb_u-1}(\TT(t'_i)-\TT(t_i))
\le (\TT(t_1)-\TT(t_0))+\sum_{i=1}^{\numb_u-1} (\TT(t_{i+1})-T(t_i')) +(\TT(t)-\TT(t_{\numb_u}))
\le |\calT_u|.
\]
Hence, on applying \eqref{09} with $n=\numb_u$,
noting that the last term on the right-hand side is bounded by
$c_{\rm lip}|\calT_u|$, we obtain on taking $u\to0$,
$r_t\le c_r e^{-\eps_h(\TT(t)-\TT(t_0))}+\int_{t_0}^t\neweta_\tau d\tau$.
Finally, since by definition $s\le t_0$ and $\TT(t_0)=\TT(s)$, the lemma follows.
\end{proof}

\paragraph{{\bf Step 3.}}
We now prove that $\bar{\TT} := \sup_t \TT(t) = \infty$. 
Note that this  implies that 
the ``servers'' become busy infinitely often, as one might expect in the supercritical regime $\lambda > 1$.
(We will later use this to prove  the stonger condition that the complement of  ${\mathcal B}$ is bounded.)

Arguing by contradiction, assume that $\bar{\TT} < \infty$.
By \eqref{01}, clearly $\lan \f1,\mu_t\ran\le\lan \f1,\nu_t\ran\le1$ and, by Assumption \ref{ass:a}(\ref{ass:a1}),   $\langle h, \nu_t \rangle \leq c_h$, for all $t > 0$.
However, \eqref{01}, \eqref{14-} and \eqref{fkprime} together implly 
\begin{align*}
\lan \f1,\mu_t\ran=\int_0^\infty \ttpsi(x,t)dx &=\int_0^t\bar G^s(x)k_{t-x}dx\\
&=\int_0^t\bar G^s(x)[\la 1_{\{B_{t-x}<1\}}+\lan h^s,\nu_{t-x}\ran
1_{\{B_{t-x}=1\}}]dx\\
&\ge
\la\int_0^t\bar G^s(x)dx -\la\int_0^t\bar G^s(x)1_{\{B_{t-x}=1\}}dx.
\end{align*}
Moreover, it is also true that 
\begin{align*}
\int_0^t\bar G^s(x)1_{\{B_{t-x}=1\}}dx
&\le \int_0^{t/2}1_{\{B_{t-x}=1\}}dx
+\int_{t/2}^t\bar G^s(x)dx\\
&\le (\TT(t)-\TT(t/2))+\int_{t/2}^\iy\bar G^s(x)dx.
\end{align*}
Recalling that $\int_0^\iy\bar G^s(x)dx=1$  (see Assumption \ref{ass-main}), if $\bar{\TT} < \infty$ 
the above expression converges to zero as $t \rightarrow \infty$. 
Hence, $\liminf_{t\to\iy}\lan \f1,\mu_t\ran\ge\la>1$, which is a contradiction. This proves $\bar{\TT} = \infty$.

\paragraph
    {\bf Step 4.} We now combine the above results to prove Proposition \ref{prop-superpf}.

    \begin{proof}[Proof of Proposition \ref{prop-superpf}.]
     We first claim that to establish \eqref{14}, it suffices to show that 
$B(t)=1$ for all sufficiently large $t$. 
   Recalling that  $\neweta = \moc (\langle h^s, \newdel \rangle)$ is integrable on $[0,\iy)$ by Lemma \ref{lem2}
and the bound \eqref{120} on   $\moc$, and that 
 $0\leq L(t)\to\iy$ as $t\to\iy$ by Step 3, which implies $\calB$ is unbounded,  we can 
 send first $t\to\iy$ along $\calB$  and then $s\to\iy$ in \eqref{ineq-lem3} of Corollary \ref{cor-lem3}, to obtain 
$\limsup_{t\to\iy,\ t\in\calB}r_t\le0$.
We cannot directly deduce from this that the limit of $r_t$ along
$\calB$ is zero, since
$r_t = R(\mu_t\| \nu_*)$ could be negative. 
However, for $t \in {\mathcal B}$,  $B(t) = \lan \f1,\nu_t\ran=1$ and 
hence, $\lan \f1,\mu_t\ran = 1- \lan \f1,\newdel_t\ran$.  Since $r_t = R(\mu_t\|\nu_*)$, and $\langle \f1, \newdel_t \rangle \to 0$ by
Remark \ref{rem-theta}, when combined with  \eqref{10}  this implies 
$\limsup_{t \to \iy, t \in \calB} r_t \geq \limsup_{t \to \iy, t \in \calB} \langle \f1, \mu_t\rangle \ln \langle \f1, \mu_t \rangle = 0$.
Hence, 
\begin{equation}
  \label{14-lite}
\lim_{t\to\iy,\ t\in\calB}\lan \f1,\mu_t \ran=1\quad \text{and} \quad
\lim_{t\to\iy,\ t\in\calB}r_t=0, 
\end{equation}
If $\calB \supseteq  [t_0,\infty)$ for some  finite $t_0$, this clearly proves \eqref{14}, and the claim follows.

We now turn to the proof of the fact that $B(t) = 1$ outside a finite interval. 
First note that,  \eqref{14-lite} and the Pinsker-type inequality \eqref{ineq-Pinsker} together show that 
\begin{equation*}
\lim_{t\to\iy,\ t\in\calB}\int_0^\iy|\ttpsi(x,t)-f^*(x)|dx=0.
 \end{equation*}
Thus, given $\eps_0 :=\frac{\lambda-1}{4}$ there exists  $T \in \calB$ such that
\begin{equation}\label{2:0}
c_h\lan \f1, \newdel_t\ran<\eps_0
\quad \text{ and } \quad
c_h\int_0^\infty|\ttpsi(x,t)-f^*(x)|dx<\eps_0
\qquad \text{for all } t\geq T,\ t\in\calB.
\end{equation}
We claim that $[T,\iy)\subset\calB$. Arguing by contradiction,
  assume there exists $T'> T$ for which $T' \not \in \calB$, that is, such that 
  $B ({T'})<1$. 
Let $\tau :=\sup \{t<T':B_t=1\}$. By the continuity of $B$, $T \leq \tau< T'$ and 
  $\tau\in\calB$; in particullar,  the estimates in \eqref{2:0} are valid for $t=\tau$.
Find $t^* > 0$ so small that $G^s(t^*)<\frac{1}{4}$ and $0<t^*< T'-\tau$. For all $0\le t\leq t^*$,
 applying Lemma \ref{lem:shift} with $T = \tau$ and  \eqref{f5} with $\psi = h^s$,  and using the fact that  \eqref{fkprime} implies
  $K (\tau+t) - K(\tau) =  \lambda t$ for $t \in (0,t^*)$, the identity $g^s = h^s \bar{G}^s$, the upper bound on $h^s$ from
  Assumption \ref{ass:a}(\ref{ass:a1}) and \eqref{2:0},  we obtain 
\begin{align*}
\lan h^s,\nu_{\tau+t}\ran&=\int_0^\infty\frac{g^s(x+t)}{\bar G^s(x)}\nu_\tau(dx)+\lambda\int_0^tg^s(t-s)ds\\
&=\int_0^\infty\frac{g^s(x+t)}{\bar G^s(x)}\newdel_\tau(dx)
+\int_0^\infty\frac{g^s(x+t)}{\bar G^s(x)}\nu^*(dx)
+\int_0^\infty\frac{g^s(x+t)}{\bar G(x)}(\mu_\tau(dx)-\nu^*(dx))
+\lambda\int_0^tg^s(t-s)ds
\\
&\leq c_h\lan \f1,\newdel_\tau\ran+\int_0^\infty g^s(x+t)dx+c_h\int_0^\infty|f(\tau,x)-f^*(x)|dx+\lambda G^s(t)\\
&\leq \eps_0+1-G^s(t)+\eps_0+\lambda G^s(t)
\\
&= 1+(\lambda-1)G^s(t)+2\eps_0
\leq 1+3\eps_0
=\lambda-\eps_0.
\end{align*}
Thus for all $\tau<s<\tau+t^*$, $\lan h,\nu_s\ran\le\lambda-\eps_0$.
Next, since the interval $(\tau, \tau+t^*)$ is a subset of an excursion,  
equation \eqref{1:1} for $B$ is valid for $t$ in that interval, and  it follows that 
$B'(s)\ge\eps_0$ for $s\in(\tau,\tau+t^*)$. 
Bsy the continuity of $B$,
\[
B (s) \ge 1+(t-\tau)\eps_0>1,\qquad s\in(\tau,\tau+t^*),
\]
which is a contradiction. We have thus shown that $B(t)=1$ for all sufficiently large $t$.  Together
with \eqref{14-lite}, this proves  \eqref{14}.

 To conclude the proof of the proposition, it only remains to show that $\lan h^s,\nu_t\ran\to1$ as $t \to \infty$.
Fix $T \in (0,\infty)$ such that $B(t)=1$ for all $t\ge T$.
Then using Lemma \ref{lem:shift}  and  equation \eqref{f5} with $\fsp(x)=h^s(x)$, and noting from \eqref{fkprime} that  
$K^\prime(T +s) = \langle h, \nu_{T+s}\rangle$ for all $s > 0$,  and recalling again that $g^s  = \bar{G}^s h^s$, we have 
\[
\lan h^s,\nu_{T+s}\ran = z(s)+\int_{[0,s]}g^s(T + s-w)\lan h^s,\nu_{T+w}\ran dw, 
\]
where $z(s) :=\int_{[0,\iy)}\frac{g^s(x+s)}{\bar G^s(x)}\nu_{T}(dx).$ 
Next, note that $g^s(s)=h^s(s)\bar G^s(s)\le c_h\bar G^s(s)$. Since $\bar G^s$
is decreasing and integrable over $[0,\iy)$, it is also directly Riemann integrable
(see Prop.\ 2.16(c), Ch.\ 9 of \cite{cin}), and thus, so is $g^s$.
Hence, by the key renewal theorem (Theor.\ 2.8, Ch.\ 9 of \cite{cin}),
$\lan h^s,\nu_{T+s}\ran$ converges as $s\to\iy$ to $\int_0^\iy z(s)ds/\int_0^\iy xg^s(x)dx=\int_0^\iy z(s)ds$,
since by Assumption \ref{ass-main}, $\int_0^\iy xg^s(x)dx=1$. Thus,
\begin{align*}
\lim_{s\to\infty}\lan h^s,\nu_{T+s}\ran &=\int_0^\iy\int_{[0,\iy)}\frac{g^s(x+s)}{\bar G^s(x)}\nu_{T}(dx)ds
\\&=\int_{[0,\iy)}\frac{1}{\bar G^s(x)}\Big(\int_0^\iy g^s(x+s)ds\Big)\nu_{T}(dx)
=\int_{[0,\iy)}\nu_{T}(dx), 
\end{align*}
which is equal to $1$ by our choice of $T$.  This completes the proof of the proposition. 
\end{proof}

    \subsection{Proof of Convergence when the Hazard Rate Function is Decreasing}
    \label{subs-pf2supcrit}

  In this section, we assume throughout that  Assumption \ref{ass-main} and 
  Assumption \ref{ass:a}(\ref{ass:a2})  hold, and we establish
  Theorem \ref{th-main2}(2) in this case, as well as Theorem \ref{th-main2}(3). 
    In addition, fix  $\lambda \geq  1$, and suppose that $(\fx,\fmeas,\freneg)$ is the  solution to the fluid equations with arrival rate $\lambda$ and some initial condition  $(\fx(0), \fmeas_0, \freneg_0) \in \newcspace$.  Also, recall from (\ref{def-q}) that 
$B(t) = \langle \f1, \fmeas_t\rangle$, and define 
	\begin{equation}
		\label{def-W}
		W(t) := B(t) - \int_{[0,H^s)} \frac{\bar{G}^s(x+t)}{\bar{G}^s(x)} \nu_0(dx), \quad t \geq 0. 
	\end{equation}
        Note that $W(t)$ represents the fluid mass of jobs that arrived after time $0$ and
are still in service at time $t$.

We will first establish the following key result.

\begin{proposition}
  \label{prop-key}
 Suppose  Assumption \ref{ass-main} and Assumption \ref{ass:a}(\ref{ass:a2})  hold, and $\lambda \geq 1$.
Then we have 
	\begin{equation}
		\label{Blim}
		\lim_{t \rightarrow \infty} W(t)= \lim_{t \rightarrow \infty} B(t) = 1, 
	\end{equation}
Further, if $\lambda > 1$, there exists $T \in [0,\infty)$ such that $B(t)=1$ for all $t\geq T$.
\end{proposition}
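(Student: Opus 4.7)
The plan is to deduce both limits in \eqref{Blim} from a renewal-type representation of $W(t)$ together with a recursive argument that exploits the DHR hypothesis in Assumption \ref{ass:a}(\ref{ass:a2}), in the spirit of \cite{ZL}. Specializing \eqref{f5} to $\fsp=\f1$ gives the decomposition
\[
B(t) = \theta(t) + W(t),\qquad \theta(t):=\int_{[0,H^s)}\frac{\bar G^s(x+t)}{\bar G^s(x)}\nu_0(dx),\qquad W(t)=\int_0^t \bar G^s(t-s)\,k_s\,ds.
\]
Since $\theta(t)\le\langle\f1,\nu_0\rangle\le 1$ and $\bar G^s(x+t)\to 0$ for each $x\in[0,H^s)$, the dominated convergence theorem yields $\theta(t)\to 0$, so $B(t)-W(t)\to 0$. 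Combined with the non-idling constraint $W(t)\le B(t)\le 1$ from \eqref{eq-fnonidling}, both assertions of the first display of the proposition reduce to the single liminf bound $\liminf_{t\to\infty}W(t)\ge 1$.

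For this liminf I would exploit the dichotomy \eqref{fkprime}: on any excursion interval $k_s=\lambda\ge 1$ directly, and on any busy interval $k_s=\langle h^s,\nu_s\rangle$, which by \eqref{f5} with $\fsp=h^s$ and the identity $g^s=h^s\bar G^s$ satisfies a renewal-type convolution equation of the form $\langle h^s,\nu_s\rangle = (g^s*k)(s)+\zeta(s)$, with $\zeta(s)\to 0$ because the contribution of $\nu_0$ decays like $\theta$. The DHR hypothesis enters precisely to ensure that the renewal density associated with $G^s$ is monotone and bounded, which is the exact property required to propagate lower bounds on $W$ forward in time across alternating busy and excursion intervals. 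Following the recursive strategy of \cite{ZL}, I would set $\underline W:=\liminf_{t\to\infty}W(t)$ and show, by iterating the renewal inequality over one full cycle of a busy period followed by an excursion, that the assumption $\underline W<1$ produces a strictly larger liminf on the subsequent cycle, eventually yielding a contradiction. This gives $W(t)\to 1$, and hence $B(t)\to 1$.

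For the supercritical case $\lambda>1$, I would upgrade this to the stronger statement that $B(t)=1$ for all $t$ beyond some $T$. Assume for contradiction that there exists $t^\star>T$ with $B(t^\star)<1$; by the continuity of $B$ (from the proof of Theorem \ref{th-fluid}(iii)) $t^\star$ lies in an excursion interval $(\tau,\tau')$ on which \eqref{1:1} is valid. Since $B(\tau)=1$, $\theta(\tau)\le\varepsilon$ and, by the first part of the proof, $W(\tau)\ge 1-\varepsilon$, applying Lemma \ref{lem:shift} at time $\tau$ together with \eqref{f5} (for $\fsp=h^s$) over a short window $(\tau,\tau+\delta)$ with $G^s(\delta)$ small lets one bound $\langle h^s,\nu_t\rangle\le 1+O(\varepsilon)+\lambda G^s(\delta)$. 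Then \eqref{1:1} forces $B'(t)\ge(\lambda-1)/2>0$ on that window for $\varepsilon,\delta$ small, driving $B$ past $1$ within the window and contradicting the excursion hypothesis. The main obstacle is the liminf estimate in Step 2: because $\bar G^s$ need not have bounded support and the convolution with $k_s$ is genuinely long-range, the recursive propagation of lower bounds must be made uniform in the tail of $G^s$, and it is precisely the DHR monotonicity of $h^s$ that makes the iteration close.
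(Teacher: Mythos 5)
Your setup (the decomposition $B=\theta+W$ with $\theta(t)\to0$, the renewal identity $W(t)=\int_0^t\bar G^s(t-s)\,dK(s)$, and the reduction of \eqref{Blim} to $\liminf_{t\to\iy}W(t)\ge1$) coincides with the paper's, but the core of the proposition is exactly the step you defer: the recursive lower bound on $W$. The paper does not iterate over busy/excursion cycles at all; it writes $K=W+Z$ with $Z(t)=\int_0^tW(t-s)\,dU_s(s)$, invokes Brown's theorem (DHR implies the renewal function $U_s$ is concave) to get $u_s'\le0$ and $u_s(0)=g^s(0)$, hence $Z'(t)\le W(t)g^s(0)$ as in \eqref{z'est}, combines this with the implication $Z'(t)\le\la\Rightarrow W'(t)\ge0$ from \eqref{eq-bkz}--\eqref{eq-conclusion}, uses Lemma \ref{lem:Wsmall} (which itself needs an argument: if $W$ stayed below $c<1$ then eventually $B<1$, $K'=\la$, and the renewal representation forces $W\to\la\ge1$), and the key renewal theorem $u_s(t)\to1$, to run an induction over the explicit thresholds $\la_n=(\la-\eps)\bigl(1-(1-1/g^s(0))^{n+1}\bigr)$ of \eqref{lambda-n} (Lemma \ref{lem:preprop}), with a separate monotone case when $g^s(0)\le\la-\eps$ (or $\le1$ when $\la=1$). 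None of this machinery — in particular the contraction factor $1-1/g^s(0)$ that makes the recursion close, and the role of $u_s(t)\to1$ — appears in your plan; your proposed "strictly larger liminf on the subsequent cycle" is asserted, not derived, and there is no reason busy/excursion cycles recur or admit uniform estimates (on busy stretches $k=\lan h^s,\nu\ran$ can be small, which is precisely the difficulty). Acknowledging that "the main obstacle is the liminf estimate in Step 2" leaves the heart of the proposition unproven.

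The supercritical step as you sketch it would also fail. The window bound $\lan h^s,\nu_{\tau+t}\ran\le 1+O(\eps)+\la G^s(\delta)$ requires, via \eqref{f5} and the DHR inequality $h^s(x+t)\le h^s(x)$, control of the form $\lan h^s,\nu_\tau\ran\le 1+O(\eps)$ at the start of the excursion; but the facts you have there ($B(\tau)=1$, $\theta(\tau)\le\eps$, $W(\tau)\ge1-\eps$) constrain only total masses, not the age profile of $\nu_\tau$, whose mass could sit near age zero where $h^s$ is as large as $g^s(0)$, possibly exceeding $\la$. In the paper this kind of window estimate is used only in the bounded-hazard case, where it is powered by the entropy/Pinsker $L^1$ closeness of the density to $f^*$, which is unavailable under Assumption \ref{ass:a}(\ref{ass:a2}); the convergence $\lan h^s,\nu_t\ran\to1$ in the DHR case is obtained only \emph{after} $B\equiv1$ eventually (Lemma \ref{cor-supercritlim}), so your argument is circular at this point. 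The paper's DHR mechanism is different: for all large $t$ one shows $Z'(t)<\la-\eps/2$ (via the bounds above and the thresholds $\la_n$), hence $W'(t)\ge\eps/2$ whenever $B(t)<1$ by \eqref{eq-bkz}, and since the $\nu_0$-correction term relating $B'$ and $W'$ vanishes as $t\to\iy$, $B$ reaches $1$ and remains there by Lemma \ref{lem:f}. You would need either to import that mechanism or to supply an independent justification of $\lan h^s,\nu_\tau\ran\approx1$, which your proposal does not contain.
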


Before launching into the proof, we derive some useful relations.  
Setting $\fsp \equiv \f1$ in (\ref{f5}) and using
integration by parts, it follows that 
\begin{eqnarray}  
	B(t)  =  \langle \f1, \nu_t \rangle & = &
	\int_{[0,H^s)} \frac{\bar{G}^s(x+t)}{\bar{G}^s(x)} \nu_0(dx)
	+ \int_0^t \bar{G}^s(t-s) dK(s) \label{eq-bk} \\
	& = & \int_{[0,H^s)} \frac{\bar{G}^s(x+t)}{\bar{G}^s (x)} \nu_0(dx) + K(t) -
	\int_0^t K(s) g^s(t-s)ds,\nonumber 
\end{eqnarray}
which when rearranged yields 
\begin{equation}
	\label{rel-K}
	K(t) =W(t)
	+ \int_0^t K(t-s) g^s(s) ds. 
\end{equation}
Then, (\ref{eq-bk}),  \eqref{def-W} and the fact that $\fmeas_t$ is a sub-probability measure,
together imply that  for each $t\geq 0$,
\begin{equation}
	\label{rel-W}
	W(t)=\int_0^t \bar{G}^s(t-s) dK(s)\geq 0 \quad \mbox{ and } \quad
	W(t)\leq B(t) \leq 1.  
\end{equation}
Together with \eqref{rel-K} and  the renewal theorem (see Chapter V of \cite{ASS}), this implies 
\begin{equation}  \label{eq-kb} K(t) = W(t) + Z(t), \qquad
	\mbox{ with }  \quad 
	Z(t) := \int_{0}^t W(t-s) \ dU_{s}(s),  
\end{equation}
and $U_s$ is equal to the renewal function of the distribution  with density $g^s$.
Now,
\eqref{eq-fk} implies that
\[ D(t) := \int_0^t \langle h, \nu_s \rangle ds = B(0) - B(t) + K(t), \qquad t \geq 0. \]
Then by (\ref{eq-kb}), \eqref{def-W} and \eqref{eq-bk},
we obtain 
\begin{eqnarray}
	D(t) & = &  \langle \f1, \nu_0 \rangle - \int_{[0,H^s)}  \frac{\bar{G}^s(x+t)}{\bar{G}^s(x)} \nu_0(dx) + Z(t) \nonumber \\ 
	& = & \int_{[0,H^s)} \frac{G^s(x+t) - G^s(x)}{\bar{G}^s(x)} \nu_0(dx)
	+ Z(t).  \label{eq-d}
\end{eqnarray}
Under Assumption \ref{ass:a}(\ref{ass:a2}),  the hazard rate function $h^s$ is decreasing and hence,
by Theorem 3 of  \cite{Brown}, the renewal function $U_s$ is concave.  Since $G^s$ has density $g^s$,  the density $u_s:= U_s^\prime$ exists by Proposition 2.7 of \cite{ASS} and $u_s(x)= \sum_{n=1}^\infty (g^s)^{\star n}(x),\ x\geq 0$,
which in particular implies 
that $u_s(0) = g^s(0)$.  Moreover, by Alexandrov's Theorem (cf. page 172 of \cite{ncpl}), the concavity of $U_s$ implies  that $u_s$ is non-increasing, that is, 
\begin{equation}
	\label{rel-usprime}
	u_s^\prime (t) \leq 0, \quad \mbox{ for  a.e.\ } t \geq 0. 
\end{equation}
Now, differentiation of both sides of the defining equation for $Z$ in (\ref{eq-kb}) yields 
\begin{equation} Z^\prime (t) =   W(t)  u_s (0) + \int_0^t  W(t-s)  u_s^\prime (s) ds, \quad
	\mbox{ for  a.e.\ } t \geq 0.  \label{eq-z'}
\end{equation}
On the other hand, differentiating the  equation for $K$  in \eqref{eq-kb} and using \eqref{fkprime}, 
one obtains, for a.e.\ $t\geq 0$, 
\begin{eqnarray}
	W^\prime(t) & = &  K^\prime (t) - Z^\prime (t) \label{eq-bkz}\\
	& = & \left\{
	\begin{array}{ll}
		\lambda - Z^\prime (t)  & \mbox{ if } B(t) < 1, \\
		D^\prime (t) - Z^\prime (t) &  \mbox{ if } B(t) = 1  \mbox{ and } Q(t) > 0, \\
		\lambda \wedge D^\prime (t) - Z^\prime(t) & \mbox{ if } B(t) = 1 \mbox{ and } Q(t) = 0. 
	\end{array} 
	\right. \nonumber
\end{eqnarray}
Next, differentiating both sides of (\ref{eq-d}), we obtain for a.e. $t\geq 0$, 
\[ D^\prime (t) = \int_{[0,H^s)} \frac{g^s(x+t)}{\bar{G}^s (x)} \nu_0(dx) + Z^\prime (t)
\geq Z^\prime (t).
\]
Therefore,  by (\ref{eq-bkz}), for a.e. $t\geq 0$,
\begin{equation}
	\label{eq-conclusion}
	Z^\prime (t) \leq \lambda \quad \Rightarrow \quad  W^\prime(t) \geq 0.
\end{equation}

We  now establish some auxiliary results that will be used in the proof of Proposition \ref{prop-key}.

\begin{lemma} \label{lem:Wsmall}
  Suppose $\lambda\geq 1$. Then there is no $T \in (0,\infty)$ and $c\in (0,1)$ such that $W(t)<c$ for all $t\geq T$. The same assertion also holds when $W$
  is replaced with $B$.
  \end{lemma}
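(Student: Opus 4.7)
I would argue by contradiction. Assume there exist $T \in (0,\infty)$ and $c \in (0,1)$ such that $W(t) < c$ for all $t \geq T$. The idea is that this assumption, together with the dynamics \eqref{fkprime} and the renewal-type identity \eqref{rel-W}, forces $W(t)$ to approach $\lambda \geq 1$, contradicting $W(t) < c < 1$.

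\textbf{Step 1 (reduce $W$-version to the $B$-version).} From the definition \eqref{def-W},
\[
B(t) = W(t) + \int_{[0,H^s)} \frac{\bar{G}^s(x+t)}{\bar{G}^s(x)}\,\nu_0(dx).
\]
The integral tends to $0$ as $t \to \infty$: indeed, for each fixed $x \in [0,H^s)$ the ratio $\bar{G}^s(x+t)/\bar{G}^s(x)$ decreases to $0$ in $t$, and since $\nu_0$ is a finite measure dominated convergence applies. Hence there exists $T' \geq T$ with $B(t) \leq (1+c)/2 < 1$ for every $t \geq T'$. Thus, under the standing contradiction hypothesis (or under the analogous hypothesis $B(t) < c < 1$ for $t \geq T$), we have $B(t) < 1$ for all sufficiently large $t$.

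\textbf{Step 2 ($K$ grows linearly at rate $\lambda$).} Once $B(t) < 1$ for $t \geq T'$, the first case of \eqref{fkprime} gives $K'(t) = \lambda$ for a.e.\ $t \geq T'$. Since $K$ is absolutely continuous (Theorem \ref{th-fluid}(iii)), this yields $K(t) - K(T') = \lambda(t-T')$ for all $t \geq T'$.

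\textbf{Step 3 (renewal computation forces $W(t) \to \lambda$).} Plug this into \eqref{rel-W}:
\[
W(t) = \int_{[0,T']} \bar{G}^s(t-s)\,dK(s) + \lambda \int_{T'}^t \bar{G}^s(t-s)\,ds.
\]
In the first term, $dK$ restricted to $[0,T']$ is a finite measure and $\bar{G}^s(t-s) \to 0$ as $t \to \infty$ for each fixed $s$ (which is automatic since $\int_0^\infty \bar{G}^s(u)\,du = 1 < \infty$ by Assumption \ref{ass-main}); dominated convergence makes this term vanish. Changing variables $u = t-s$ in the second term yields $\lambda \int_0^{t-T'} \bar{G}^s(u)\,du \to \lambda$ as $t \to \infty$, again by Assumption \ref{ass-main}. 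Therefore $W(t) \to \lambda \geq 1$, contradicting $W(t) < c < 1$ for $t \geq T$. This proves the lemma for $W$.

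\textbf{The $B$ version.} If instead one assumes $B(t) < c < 1$ for all $t \geq T$, Step 1 is unnecessary; \eqref{fkprime} directly gives $K'(t) = \lambda$ for $t \geq T$, Step 3 yields $W(t) \to \lambda$, and then the inequality $W(t) \leq B(t) < c < 1 \leq \lambda$ from \eqref{rel-W} gives the contradiction.

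The argument is short and the only delicate point is the passage from the hypothesis on $W$ (rather than $B$) to the conclusion $B(t) < 1$ in Step 1; this is exactly what allows us to invoke the ``$B(t) < 1$'' branch of \eqref{fkprime}. The rest is a straightforward renewal/dominated-convergence computation using the normalization $\int_0^\infty \bar{G}^s = 1$.
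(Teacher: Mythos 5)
Your proposal is correct and follows essentially the same argument as the paper: argue by contradiction, use $\int_{[0,H^s)}\frac{\bar{G}^s(x+t)}{\bar{G}^s(x)}\nu_0(dx)\to 0$ to get $B(t)<1$ eventually, invoke \eqref{fkprime} to conclude $K'=\lambda$, and then apply dominated convergence in \eqref{rel-W} to force $W(t)\to\lambda\geq 1$, contradicting $W(t)<c<1$ (with the $B$ case handled by the same computation since $B-W\to 0$ and $W\le B$).
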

\begin{proof} Suppose the statement of the lemma is not true, that is, suppose there exists $T> 0$ and $c\in (0,1)$ such that $W(t)<c$ for all $t\geq T$. Since  $\int_{[0,H^s)} \frac{\bar{G}^s(x+t)}{\bar{G}^s(x)} \nu_0(dx) \rightarrow 0$ as $t\rightarrow \infty$, by \eqref{def-W},  there exists $T'>T$ such that $B(t)<1$ for all $t\geq T'$.
	In turn, by \eqref{eq-fk}, it follows that  
	$K'(t)=\lambda$, for all $t\geq T'$, and hence (\ref{eq-bk}) and (\ref{def-W}) imply
	that
	\[W(t)=\int_0^t \bar{G}^s(t-s) dK(s) = \int_0^{T'}  \bar{G}^s(t-s) dK(s) + \lambda \int_{T'}^t  \bar{G}^s(t-s) ds.
        \]
 As $t \rightarrow \infty$, the first term converges to zero by the  dominated convergence theorem and the pointwise limit  $\bar{G}^s(t-s) \rightarrow 0$. For the same reason, the second term converges to
         $\lim_{t \to \infty} \lambda \int_{0}^t  \bar{G}^s(t-s) ds = \lambda \int_0^\infty \bar{G}^s (s) ds$,
        which is equal to  $\lambda$ by \eqref{def-mean1} of Assumption \ref{ass-main}. 
    Thus, $\lim_{t \rightarrow \infty} W(t) \geq \lambda$, 
        which is a contradiction, thus proving the first assertion of the lemma.  Since, by \eqref{def-W}, $B(t) - W(t) = \int_{[0,H^s)} \frac{\bar{G}^s(x+t)}{\bar{G}^s(x)} \nu_0(dx) \rightarrow 0$ as $t\rightarrow \infty$, the same
          assertion holds also for $B$. 
\end{proof}

Next, substituting into  (\ref{eq-z'})  the inequality  \eqref{rel-usprime},    the relation
$u_s(0) = g^s(0)$ and the fact that $W(t)\in [0,1]$ for each $t\geq 0$ 
due to  \eqref{rel-W},  we see that 
\begin{equation}
	Z^\prime (t) \leq  W(t) u_s (0) = W(t) g^s(0) \leq g^s(0) \quad \mbox{for a.e.\ } t\geq 0.  \label{z'est}
\end{equation}
We also observe that since the hazard rate function $h^s$ is decreasing by Assumption \ref{ass:a}(\ref{ass:a2}), then $g^s(0)>0$. (Otherwise, if $g^s(0)=0$, then $h^s(0)=0$, which implies that $0\leq h^s(t)\leq h^s(0)=0$ for each $t\geq 0$ and thus, $g^s(t)=0$ for all $t\geq 0$, which would contradict the fact that $g^s$ is the density of $G^s$.)  Therefore,  for $n \in \N \cup \{0\}$ and $\varepsilon \in (0,\frac{1}{2})$, define
\begin{equation}  \label{lambda-n} \lambda_n := \frac{\lambda-\varepsilon}{g^s(0)} \left( \sum_{i=0}^n 
	\left(  1 - \frac{1}{g^s(0)} \right)^i \right)= \frac{\lambda-\varepsilon}{g^s(0)} \frac{1 - \left(  1 - \frac{1}{g^s(0)} \right)^{n + 1}}{1  - \left(  1 - \frac{1}{g^s(0)} \right)} = (\lambda -\varepsilon) \left(1 - \left(  1 - \frac{1}{g^s(0)} \right)^{n + 1} \right), \end{equation}
and
\begin{equation}
	\label{tau-n}
	\tau_n := \sup \{ t > 0: W(t) < \lambda_n \}. 
\end{equation}
If $\tau_n < \infty$, then 
\begin{equation}
	\label{wtaun}
	W(\tau_n + t)  \geq \lambda_n \quad \forall t \geq 0. 
\end{equation}

\begin{lemma} \label{lem:preprop}
  Suppose $\lambda \geq 1$, $\varepsilon \in (0,\frac{1}{2})$ and $g^s(0)>\lambda-\varepsilon$. Then  $\tau_n<\infty$ and hence, \eqref{wtaun} holds for all $n \in \N$ with $n < n^*$, where
$n^* :=  \sup \left \{n \in \N_0: \lambda_n <  1 \right\}$,
and also for $n = n^*$, if $n^* < \infty$.
\end{lemma}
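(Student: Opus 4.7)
The plan is to prove by induction on $n\ge 0$ that $\tau_n<\infty$ whenever $\lambda_n<1$ (equivalently, $n\le n^*$), using the convention $\lambda_{-1}:=0$ and $\tau_{-1}:=0$. The central algebraic observation behind the recursion \eqref{lambda-n} is the telescoping identity
\begin{equation*}
\lambda_n g^s(0)-\lambda_{n-1}(g^s(0)-1)=\lambda-\varepsilon,\qquad n\ge 0,
\end{equation*}
which one verifies by direct substitution into \eqref{lambda-n}; this is precisely what motivates the choice of $\lambda_n$. The induction hypothesis at step $n$ is $\tau_{n-1}<\infty$, giving $W(s)\ge\lambda_{n-1}$ for $s\ge\tau_{n-1}$ (trivially true at $n=0$ by our conventions, in which case the base case reduces to the estimate \eqref{z'est}).

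For the inductive step I will split the integral in \eqref{eq-z'} at $s=t-\tau_{n-1}$. On $[0,t-\tau_{n-1}]$, the inductive hypothesis gives $W(t-s)\ge\lambda_{n-1}$, which together with $u_s'(s)\le 0$ from \eqref{rel-usprime} yields $W(t-s)u_s'(s)\le\lambda_{n-1}u_s'(s)$; on $[t-\tau_{n-1},t]$ the integrand is non-positive and is simply discarded. After integration this gives
\begin{equation*}
Z'(t)\le W(t)g^s(0)+\lambda_{n-1}\bigl(u_s(t-\tau_{n-1})-g^s(0)\bigr),\qquad t>\tau_{n-1}.
\end{equation*}
Since $h^s$ is decreasing, $g^s=h^s\bar G^s\le g^s(0)\bar G^s$ is dominated by a decreasing integrable function, hence is directly Riemann integrable; the key renewal theorem (using $\int_0^\infty xg^s(x)dx=1$ from Assumption \ref{ass-main}) therefore yields $u_s(s)\to 1$ as $s\to\infty$. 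Combining this with the telescoping identity, for any $\delta>0$ there exists $T_n\ge\tau_{n-1}$ such that whenever $t\ge T_n$ and $W(t)<\lambda_n$,
\begin{equation*}
Z'(t)<\lambda-\varepsilon+\lambda_{n-1}\delta.
\end{equation*}

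Because $\lambda_n<1$ (since $n\le n^*$) and $B(t)-W(t)\to 0$ as $t\to\infty$ by \eqref{def-W}, I can enlarge $T_n$ so that $B(t)-W(t)<1-\lambda_n$ for all $t\ge T_n$. Then $W(t)<\lambda_n$ forces $B(t)<1$, so \eqref{eq-bkz} places us in its first line and, after choosing $\delta>0$ small enough, $W'(t)=\lambda-Z'(t)>\varepsilon/2$. From this strict differential inequality I conclude $\tau_n<\infty$ in two steps: Lemma \ref{lem:Wsmall} applied with $c=\lambda_n<1$ rules out $W(t)<\lambda_n$ for all $t\ge T_n$, so $W$ reaches $\lambda_n$ at some finite $t^\ast\ge T_n$; and once $W(t^\ast)\ge\lambda_n$ it cannot drop below, for a first down-crossing would produce a right-neighborhood on which $W<\lambda_n$ yet $W'>\varepsilon/2$ almost everywhere, contradicting absolute continuity of $W$. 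Continuity of $W$ then upgrades this to \eqref{wtaun}.

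The most delicate step is the calibration of the bound on $Z'(t)$: it must recover exactly $\lambda-\varepsilon$ (up to a vanishing term) from the hypothesis $W\ge\lambda_{n-1}$, so that the same strict gain $W'>\varepsilon/2$ persists at every level $n$, and this is precisely what the telescoping identity is engineered to achieve. A secondary technical point is the renewal-theoretic limit $u_s(t)\to 1$, which I will obtain through the direct Riemann integrability of $g^s$ supplied by the monotonicity of $h^s$ in Assumption \ref{ass:a}(\ref{ass:a2}).
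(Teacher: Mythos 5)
Your proof is correct and follows essentially the same route as the paper's: induction on $n$, splitting the integral in \eqref{eq-z'} using $u_s'\le 0$ together with the hypothesis $W\ge\lambda_{n-1}$ past $\tau_{n-1}$, the telescoping identity built into \eqref{lambda-n}, the limit $u_s(t)\to 1$, Lemma \ref{lem:Wsmall} to force $W$ up to level $\lambda_n$, and a no-down-crossing argument (the paper's Lemma \ref{lem:f}, which you re-derive directly). The only cosmetic differences are that you fold the base case into the induction via the convention $\lambda_{-1}=0$, obtain $u_s\to1$ from the key renewal theorem rather than citing Smith's theorem, and replace the paper's use of \eqref{eq-conclusion} plus Lemma \ref{lem:f} by observing that for large $t$ the inequality $W(t)<\lambda_n$ forces $B(t)<1$ (since $B-W\to0$ and $\lambda_n<1$), so that the first line of \eqref{eq-bkz} yields the strict bound $W'>\varepsilon/2$.
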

\begin{proof}
	Since $g^s(0)>\lambda -\varepsilon > \frac{1}{2}$ by the assumptions of the lemma, 
	it follows  that  $| 1 - \frac{1}{g^s(0)} |<1$ and  (\ref{lambda-n}) then implies that 
	\begin{equation} \lambda_n  \rightarrow \lambda-\varepsilon \mbox{ as } n\rightarrow \infty. \label{lim-lambdan} \end{equation}
	We prove the lemma by induction.  
	We first start with the base case $n=0$, where $\lambda_0= (\lambda-\varepsilon)/g^s(0)$.   
	Note that $\lambda_0 < 1$ by the assumptions of the lemma. We argue by contradiction to show that 
	\begin{equation}
		\label{wtau}
		W(t) < \frac{\lambda-\varepsilon}{g^s(0)} \quad \mbox{ for all } t \in (0,\tau_0). 
	\end{equation}
	Note that \eqref{wtau} holds trivially if $\tau_0 = 0$.  So, suppose $\tau_0 > 0$ and
	\eqref{wtau} does not hold. 
	Then there must exist  $0 < t_1 < \tau_0$ for which 
	$W(t_1) \geq \frac{\lambda-\varepsilon}{g^s(0)}$. It follow from \eqref{z'est} and \eqref{eq-conclusion}, that for a.e.\ $t\in (t_1,\tau_0)$,  the inequality $W(t) < \frac{\lambda-\varepsilon}{g^s(0)}$ 
	implies that 
	$Z^\prime(t) \leq W(t) g^s(0) < \lambda -\varepsilon <\lambda$  and hence
	$W^\prime (t) \geq  0$. Since $W$  is absolutely continuous, $W(t) \geq \frac{\lambda-\varepsilon}{g^s(0)} = \lambda_0$ for all
         $t \in [t_1,\tau_0)$ (see Lemma \ref{lem:f}).  This contradicts 
	the definition of $\tau_0$, and thus, \eqref{wtau} holds.   If  $\tau_0 = \infty$, then   \eqref{wtau} implies $W(t)<\lambda_0<1$ for all $t>0$,
	which contradicts Lemma \ref{lem:Wsmall}.  Thus, $\tau_0<\infty$. This completes the proof of the base case. 
	
	Now, suppose that  $\tau_k<\infty$ for some $k \in \N \cup \{0\},$ with $k < n^*$ if $n^* < \infty$. It follows that $\lambda_{k+1}<1$ by the choice of $k$ and the  definition of $n^*$. 
	By the definition of $\tau_k$ and the continuity of $W$, 
	\begin{equation}
		\label{claimW}
		W(\tau_k + t) \geq \lambda_k\mbox{  for all } t \in [0,\infty).
	\end{equation}
	Then for a.e.\ $t \geq 0$, by \eqref{eq-z'}, \eqref{rel-usprime},  \eqref{claimW} 
	and the relations $W(t) \geq 0$ and $u_s(0) = g^s(0)$, we have  
	\begin{eqnarray}
		Z^\prime (\tau_k + t) & = & W(\tau_k +t) g^s(0) + \int_0^t W(\tau_k+t-s) u_s^\prime (s) ds+ \int_t^{\tau_k + t} W(\tau_k + t -s) u_s^\prime(s)ds  \label{eqn-Z'est1}\\
		& < & W(\tau_k+t) g^s(0) + \lambda_k
		\left( u_s(t) - g^s(0)\right).  \label{eqn-Z'est}
	\end{eqnarray}
Since Assumption \ref{ass:a}(\ref{ass:a2}) implies that the integrable function $g^s$ is also bounded, it lies in $\mathbb{L}^{1+\varepsilon}(0,\infty)$
for any $\varepsilon > 0$, and  satisfies $g^s(t) \rightarrow 0$ as $t\rightarrow \infty$. Thus,  by Theorem 12 of \cite{smith54} we
can conclude that $\lim_{t \rightarrow \infty} u_s(t) = 1$.  
	Hence,  there exists $\sigma_k>0$ such that
	\begin{equation}  
		\label{eq-sigmak}
		(\lambda -\varepsilon) + \lambda_k (u_s(t) -1) = (\lambda -\varepsilon) +
		\lambda_k\left(  g^s(0) - 1 \right) + \lambda_k
		\left( u_s(t) - g^s(0)\right) <\lambda \mbox{ for all } t\geq \sigma_k.
	\end{equation}  
	We now show that the following statement cannot hold: 
	\begin{equation}
		\label{liminfW1}
		W(\tau_k+t)  <   \lambda_{k+1} = 
		\frac{\lambda-\varepsilon}{g^s(0)}  +
		\lambda_k\left(  1 - \frac{1}{g^s(0)} \right) \mbox{ for all } t> \sigma_k,  
	\end{equation}
 where the  equality follows from \eqref{lambda-n}. 
	Indeed, if this were true, then   this  would imply that  
	$W(t)<\lambda_{k+1}<1$ for all $t \geq \sigma_k':=\tau_k+\sigma_k$, which contradicts 
	Lemma \ref{lem:Wsmall}.  Thus,   \eqref{liminfW1} does not hold or, in other words,
	there exists $\tau_k' \in (\sigma_k',\infty)$ such that
	$W(\tau_k') \geq \lambda_{k+1}$. 
     We now show that for a.e.\ $t \in (0, \infty)$, if $W(\tau_k' + t) < \lambda_{k+1}$ then $W^\prime (\tau_k' + t) \geq 0$.
        Indeed, if the first inequality is true, then substituting
	this into  (\ref{eqn-Z'est}) with $\tau_k'$ in place of $\tau_k$, and using
	\eqref{eq-sigmak}, it follows that $Z^\prime (\tau'_k+t)<\lambda$.
        When combined with  \eqref{eq-conclusion} the latter implies $W^\prime (\tau_k' + t) \geq 0$.
        Hence (applying Lemma \ref{lem:f} with $f=W$, $c= \lambda_{k+1}$, $T= \tau_k'$, $S=\infty$), 
        it follows that $W(\tau_k'+t) \geq \lambda_{k+1}$ for all $t \geq 0$, thus showing that 
         $\tau_{k+1}\leq \tau_k'<\infty$.
	By induction, it follows that  for each $0\leq n <   n^*$, $\tau_n<\infty$ and hence, \eqref{wtaun} holds,
	and if $n^* < \infty$ then also $\tau_{n^*} < \infty$ and \eqref{wtaun} holds with
	$n = n^*$. This completes the proof of the lemma.
	
\end{proof}

We are now in a position to present the proof of Proposition \ref{prop-key}. 

\begin{proof}[Proof of Proposition \ref{prop-key}] 
	We  first prove the proposition when $\lambda=1$. For this, we consider two cases. \\
	{\em Case 1a: }  $g^s(0) \leq 1$.
	In this case, \eqref{z'est} shows that $Z^\prime (t) \leq  1 $ for a.e.\  $t\geq 0$,  then \eqref{eq-conclusion} implies that  for
	a.e.\ $t \geq 0$, $W^\prime(t) \geq 0$.    Since $W$ is absolutely continuous by \eqref{rel-W}
	this implies that  $W$ is increasing on $[0,\infty)$ and  $b := \lim_{t \rightarrow \infty} W(t)$ exists. Furthermore, 
	\eqref{def-W} and the fact that $\bar{G}^s(x+t) \rightarrow 0$ as $t \rightarrow \infty$ for
	every $x \in [0,H^s)$, imply 
	$b = \lim_{t \rightarrow \infty} B(t).$ 
	We now argue by contradiction to show that $b = 1$.  Suppose    
	$b < 1$, then for any $T < \infty$, there exists  $T_1 < \infty$ such that for $t \geq 0$, 
	$B(T_1+t) < 1$ and  thus, by
        \eqref{fkprime} 
	$K^\prime (T_1 +t) = 1$.    Now, recalling $B(\cdot) = \langle \f1, \nu_{\cdot} \rangle$ from 
	\eqref{eq-bk} and combining  Lemma \ref{lem:shift} and Theorem \ref{th-fluid},
	it follows that \eqref{f5} holds with $\fsp = \f1$, and $\fmeas_t$ and $K_t$ 
	replaced with $\fmeas_{T_1 +t}$, and $K_{T_1+t} - K_{T_1}$, respectively, or in other words, 
	for each $t\geq 0$,   
	\begin{eqnarray*}
		B(T_1 + t) & = & \int_{[0,H^s)} \frac{\bar{G}^s (x+t)}{\bar{G}^s(x)} \nu_{T_1}(dx)
		+ \int_0^t \bar{G}^s(t-s) K^\prime (T_1+s) ds.
	\end{eqnarray*}
	When combined with the  relation $K^\prime (T_1 +\cdot) = \lambda = 1$ a.e., this implies
	that for each $t \geq 0$. 
	\begin{eqnarray*}
		B(T_1 + t) 	& = & \int_{[0,H^s)}  \frac{\bar{G}^s(x+t)}{\bar{G}^s(x)} \nu_{T_1} (dx) +
		\int_0^t \bar{G}^s(t-s) ds,
	\end{eqnarray*}
         Sending $t \rightarrow \infty$, using  $\bar{G^s}(x+t) \to 0$ pointwise 
        and the dominated convergence theorem, as well as \eqref{def-mean2} of Assumption \ref{ass-main},  
this implies	$b = \lim_{t \rightarrow \infty} B(t) = 1$.
	This contradicts the supposition that $b< 1$, and thus proves 
	that $b=1$.

        \noindent 
	{\em Case 1b: }  $g^s(0) > 1$. In this case, by  (\ref{lambda-n}),
	\[\lambda_n = (1 -\varepsilon) \left(1 - \left(  1 - \frac{1}{g^s(0)} \right)^{n + 1} \right) <1 \mbox{ for all } n\geq 1. \] Thus, by Lemma \ref{lem:preprop}, for each $n\geq 1$, we have $\tau_n<\infty$ and so \eqref{wtaun} implies
	$\liminf_{t\rightarrow \infty} W(t) \geq \lambda_n \mbox{ for each } n\geq 1$.
	By (\ref{lim-lambdan}), we obtain 
	$\liminf_{t\rightarrow \infty} W(t) \geq 1-\varepsilon$.  Sending $\varepsilon\downarrow 0$, we obtain $\liminf_{t\rightarrow \infty} W(t) \geq 1$.
	Since $\limsup_{t\rightarrow \infty} W(t) \leq 1$ by \eqref{rel-W} it follows that
	in fact   $\lim_{t\rightarrow \infty} W(t)=1$. When combined with
	\eqref{def-W} and the fact  that $\bar{G}^s(x+t) \rightarrow 0$ as $t \rightarrow \infty$ for
	every $x \in [0,H^s)$, it follows that 
	$\lim_{t \rightarrow \infty} B(t) = 1,$ thus proving the proposition in this case.\\
	
	We next prove the proposition for the case that  $\lambda>1$.  Let $\varepsilon > 0$ be small enough such that $\lambda - \varepsilon>1$.    We now consider two cases. \\
	{\em Case 2a: }    $g^s(0) \leq \lambda -\varepsilon$.
	In this case, \eqref{z'est} shows that $Z^\prime (t) \leq \lambda-\varepsilon < \lambda $ for a.e.\  $t\geq 0$,  and hence, \eqref{eq-conclusion} implies that  for
	a.e.\ $t \geq 0$, $W^\prime(t) \geq 0$.  Moreover,  by (\ref{eq-bkz}), we have
	$W^\prime ( t) = \lambda -Z^\prime ( t) \geq \varepsilon \mbox{ if } B(t)<1$.
	By the definition of $W$ in (\ref{def-W}), we obtain \[B^\prime  ( t)  =  W^\prime  (t) + \int_{[0,H^s)} \frac{g^s(x+ t)}{\bar{G}^s(x)} \nu_0(dx). \]
	Since $h^s$ is decreasing, we have  $h^s(x+ t) \leq h^s(0)$ for each $x\in [0, H^s-t)$, and an application  of the dominated convergence theorem shows that \[ \int_{[0,H^s)} \frac{g^s(x+ t)}{\bar{G}^s(x)} \nu_0(dx) \leq \int_{[0,H^s)} \frac{h^s(0)\bar{G}^s(x+ t)}{\bar{G}^s(x)} \nu_0(dx) \rightarrow 0 \mbox{ as } t\rightarrow \infty. \]
	The last three displays together imply that
	there exists $T \in (0,\infty)$ 
	such that $B^\prime (t) > \varepsilon/2$ whenever $B(t) <1$ for a.e.\  $t \in [T,\infty)$.  Since $B$ is bounded (by $1$), the inequality 
          $B(t)<1$ cannot hold for all $t\geq T$.  In other words, 
            there must exist $T^\prime >T$ such that $B(T^\prime)=1$.
   Since $B$ is absolutely
           continuous and bounded by $1$ (applying Lemma \ref{lem:f} with $f=B$, $c=1$, $T=T'$ and $S=\infty$),
         we conclude that $B(t)=1$ for all $t\in [T^\prime,\infty)$.

	\noindent 
	{\em Case 2b: } $g^s(0) > \lambda -\varepsilon$. 
	Then $n^* < \infty$ since \eqref{lambda-n} shows that
	$\lambda_n \uparrow (\lambda - \varepsilon) > 1$ as $n \rightarrow \infty$. Since by Lemma \ref{lem:preprop}, $\tau_{n^*}<\infty$,  then 
	 the continuity of $W$ dictates that  $W(\tau_{n^*})=\lambda_{n^*}$.
	Together with (\ref{eqn-Z'est}) with $k = n^*$ 
	and the fact that $W$ is bounded by $1 $ due to  \eqref{rel-W}, this implies that for
	a.e.\ $t \geq 0$, 
	\begin{eqnarray*}
		Z^\prime (\tau_{n^*} + t) & \leq  & W(\tau_{n^*}+t) g^s(0) + \lambda_{n^*}
		\left( u_s(t) - g^s(0)\right) \\ &\leq & 
                (1-\lambda_{n^*}) g^s(0) + \lambda_{n^*}  u_s(t).
	\end{eqnarray*}
	By the definition of $n^*$, we have $\lambda_{n^*}<1 \leq \lambda_{n^*+1}$.
	Together with the definition of $\lambda_n$ in (\ref{lambda-n}), this implies that 
	\[1- \lambda_{n^*} \leq \lambda_{n^*+1} - \lambda_{n^*}  = \frac{\lambda-\varepsilon}{g^s(0)} 
	\left(  1 - \frac{1}{g^s(0)} \right)^{n^*+1}.\] Combining the above two displays, we obtain
	\[Z^\prime (\tau_{n^*} + t)  \leq (\lambda-\varepsilon) 
	\left(  1 - \frac{1}{g^s(0)} \right)^{n^*+1} + \lambda_{n^*}  u_s(t).\]
	Recalling that $\lim_{t \rightarrow \infty} u_s(t) = 1$ and using the expression for
	$\lambda_{n^*}$ from \eqref{lambda-n}, it follows that as $t \rightarrow  \infty$, 
	\[(\lambda-\varepsilon) 
	\left(  1 - \frac{1}{g^s(0)} \right)^{n^*+1} + \lambda_{n^*}  u_s(t) \rightarrow (\lambda-\varepsilon) 
	\left(  1 - \frac{1}{g^s(0)} \right)^{n^*+1} + \lambda_{n^*} = \lambda -\varepsilon.  \]
	Thus,   for all $t$ large enough, $Z^\prime (\tau_{n^*}+ t)<\lambda -\varepsilon/2$.  However, note that by (\ref{eq-bkz}), we have \[W^\prime (\tau_{n^*}+ t) = \lambda -Z^\prime (\tau_{n^*}+ t) >\varepsilon/2 \mbox{ if } B(\tau_{n^*}+ t)<1. \]
	By the definition of $W$ in (\ref{def-W}), we obtain \[B^\prime  (\tau_{n^*}+ t)  =  W^\prime  (\tau_{n^*}+ t) + \int_{[0,H^s)} \frac{g^s(x+\tau_{n^*}+ t)}{\bar{G}^s(x)} \nu_0(dx). \]
	  Since $h^s$ is decreasing, it follows that $h^s(x+\tau_{n^*}+ t) \leq h^s(0)$ for each $x\in [0,H^s)$, and we obtain by the dominated convergence theorem that \[ \int_{[0,H^s)} \frac{g^s(x+\tau_{n^*}+ t)}{\bar{G}^s(x)} \nu_0(dx) \leq \int_{[0,H^s)} \frac{h^s(0)\bar{G}^s(x+\tau_{n^*}+ t)}{\bar{G}^s(x)} \nu_0(dx) \rightarrow 0 \mbox{ as } t\rightarrow \infty. \]
                The rest of the proof follows as in Case 1b. 
	The last four displays imply that
	there exists $T \in (0,\infty)$
	such that $B^\prime (t) > \varepsilon/4$ whenever $B(t) <1$ for a.e.\  $t \in [T,\infty)$.
          By the boundedness of $B$ it follows that there exists $T^\prime >T$ such that $B(T^\prime) = 1$. Thus, for  a.e.\  $t\geq T^\prime$, we have $B^\prime (t) > \varepsilon/4$ whenever $B(t) <1$.  In turn (by Lemma \ref{lem:f}) this implies that $B(t)=1$ for all $t\in [T^\prime,\infty)$. 
	Since  all possible cases have been considered, this concludes the proof of  the proposition. 
\end{proof}

We now consider convergence properties of the  measure-valued age process. 

\begin{lemma}
	\label{cor-supercritlim}
           For $\lambda \geq 1$,  under the assumptions of Proposition \ref{prop-key}, suppose there exists
              $T < \infty$ such that $B(t)= 1$ for all $t \geq T$.   Then
              $\nu_t \Rightarrow \fmeass$ and $\lan  h^s, \fmeas_t \ran \rightarrow 1$ as $t\rightarrow \infty$. 
\end{lemma}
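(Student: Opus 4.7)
The plan is to reduce the asymptotics of $\nu_t$ to a scalar renewal equation by combining the shift-invariance property (Lemma \ref{lem:shift}) with the renewal representation \eqref{f5}. By Lemma \ref{lem:shift}, $(X^{[T]},\fmeas^{[T]},\freneg^{[T]})$ solves the fluid equations with initial condition $(X(T),\nu_T,\eta_T)$, and since $B(T+t)=\langle \f1,\nu_{T+t}\rangle=1$ for all $t\ge 0$, the expression \eqref{fkprime} shows that (both sub-cases $Q(T+t)=0$ and $Q(T+t)>0$ collapse to) $(K^{[T]})'(t)=\langle h^s,\nu_{T+t}\rangle$ for a.e.\ $t\ge 0$.

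Next, apply \eqref{f5} with $\fsp = h^s$ to the shifted solution and use the identity $g^s = h^s\bar G^s$ to obtain the scalar renewal equation
\[
y(t)=z(t)+\int_0^t g^s(t-s)\,y(s)\,ds,
\]
where $y(t):=\langle h^s,\nu_{T+t}\rangle$ and $z(t):=\int_{[0,H^s)} g^s(x+t)/\bar G^s(x)\,\nu_T(dx)$. Its unique solution is $y = z + z\star u_s$, where $u_s$ is the density of the renewal measure $U_s$. Under Assumption \ref{ass:a}(\ref{ass:a2}), the arguments surrounding \eqref{rel-usprime}, together with Theorem 12 of \cite{smith54}, show that $u_s$ is nonincreasing with $u_s(0)=g^s(0)=h^s(0)<\infty$ and $u_s(t)\to 1$ as $t\to\infty$. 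Since $z$ is nonnegative, bounded by $h^s(0)$, tends to zero by dominated convergence, and by Fubini satisfies
\[
\int_0^\infty z(s)\,ds=\int_{[0,H^s)}\frac{1}{\bar G^s(x)}\int_0^\infty g^s(x+s)\,ds\,\nu_T(dx)=\langle \f1,\nu_T\rangle=1,
\]
dominated convergence applied to $(z\star u_s)(t)=\int_0^t z(u)\,u_s(t-u)\,du$ (with dominating function $h^s(0)\,z(u)$) yields $y(t)\to \int_0^\infty z(u)\,du=1$, which is the claim $\langle h^s,\nu_t\rangle\to 1$.

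For the weak convergence $\nu_t\Rightarrow\fmeass$, fix $\fsp\in\mathcal{C}_b[0,H^s)$ and apply \eqref{f5} to the shifted solution. After the change of variables $u=t-s$,
\[
\langle \fsp,\nu_{T+t}\rangle=\int_{[0,H^s)}\fsp(x+t)\frac{\bar G^s(x+t)}{\bar G^s(x)}\nu_T(dx)+\int_0^t \fsp(u)\,\bar G^s(u)\,\langle h^s,\nu_{T+t-u}\rangle\,du.
\]
The first term vanishes as $t\to\infty$ by dominated convergence. In the second term, for each fixed $u$ the integrand converges pointwise to $\fsp(u)\,\bar G^s(u)$ by the limit just established and is dominated by $\|\fsp\|_\infty\,h^s(0)\,\bar G^s(u)$, which is integrable. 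Dominated convergence thus gives $\langle \fsp,\nu_{T+t}\rangle\to \int_0^\infty \fsp(u)\,\bar G^s(u)\,du=\langle \fsp,\fmeass\rangle$, proving $\nu_t\Rightarrow\fmeass$.

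The main conceptual step is the identification of a scalar renewal equation for $\langle h^s,\nu_{T+t}\rangle$ whose driving term $z$ has total integral exactly $\langle \f1,\nu_T\rangle=1$; once this is in hand, the boundedness of the renewal density $u_s$ provided by Assumption \ref{ass:a}(\ref{ass:a2}) allows a direct dominated-convergence argument in place of appealing to the key renewal theorem, and the weak convergence of $\nu_t$ to $\fmeass$ follows routinely.
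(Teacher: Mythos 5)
Your proof is correct, but it follows a genuinely different route from the paper's. The paper first shifts to $T=0$ via Lemma \ref{lem:shift}, then obtains $\nu_t \Rightarrow \nu_*$ by citing external results from \cite{KasRam11}: since $B \equiv 1$, Corollary 4.4 there expresses $K$ as a convolution against the renewal measure $U_s$, and Lemma 6.2 there yields the weak convergence; only afterwards does it deduce $\langle h^s,\nu_t\rangle \to 1$ from the weak convergence, using that the bounded monotone $h^s$ is a.e.\ continuous and $\nu_*$ is absolutely continuous, via the continuous mapping theorem. You reverse the order and argue self-containedly: you extract the scalar renewal equation for $y(t)=\langle h^s,\nu_{T+t}\rangle$ from \eqref{f5} and \eqref{fkprime}, represent $y=z+z\star u_s$, and exploit the DFR structure of the renewal density ($u_s$ nonincreasing with $u_s(0)=g^s(0)$, and $u_s(t)\to 1$ via Theorem 12 of \cite{smith54}), all of which is indeed established in Section \ref{subs-pf2supcrit}, to pass to the limit by dominated convergence; the weak convergence of $\nu_t$ then follows from \eqref{f5} by a second dominated convergence argument. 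This is close in spirit to the paper's own treatment of $\langle h^s,\nu_t\rangle\to 1$ in the proof of Proposition \ref{prop-superpf} (done there under Assumption \ref{ass:a}(\ref{ass:a1}) with the key renewal theorem), but the boundedness and monotonicity of $u_s$ let you avoid direct Riemann integrability and the key renewal theorem, and you do not need Lemma 6.2 of \cite{KasRam11}. Two minor points you should make explicit: the identification $y=z+z\star u_s$ uses uniqueness of the renewal equation's solution in the class of locally bounded functions (your $y$ is bounded by $h^s(0)$, so this applies), and the finiteness of $h^s(0)$ rests on the paper's convention (Remark \ref{rem20} and the discussion after Assumption \ref{ass:a}) that the decreasing version of $h^s$ takes values in $\R_+$ and hence is bounded.
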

\begin{proof}
  By invoking Lemma \ref{lem:shift}, we can assume  without loss of generality that $T=0$.
  Then $B(t) = \langle \f1, \nu_t \rangle = 1$ for all $t \geq 0$,
and so by (4.6) of Corollary 4.4 of \cite{KasRam11},
$K$ has the representation
	\[
	K (t)      = \int_0^t \left(
	\int_{[0,H^s)} \frac{G^s(x+t-s) -G^s(x)}{\bar{G}^s(x)} \nu_0(dx) \right) dU_s(dx),\ t\geq 0.\]
          In view of the representation for the fluid age measure in \eqref{f5}, 
	  the convergence $\nu_t \Rightarrow \fmeass$ is then a direct consequence of Lemma 6.2 of  of \cite{KasRam11} with $\pi=\nu$.
    Finally, since $h^s$ is bounded and monotone by Assumption \ref{ass:a}(\ref{ass:a2}),  the set of its discontinuities
           is countable and thus has zero Lebesgue measure. Since $\nu_*$ is an absolutely continuous  measure, the continuous  mapping theorem
           implies $\langle h^s, \nu_t \rangle \rightarrow \langle h^s, \nu_*\rangle = \int_0^\infty g^s(x) dx = 1$, as $t \to \iy$. This  concludes 
         the proof of the lemma. 
\end{proof}

\subsection{Uniqueness of Random Fixed Points}
\label{subs-rfp}

We now show how  the convergence results of the last two sections
can be bootstrapped to conclude, under Assumption \ref{ass-unique},
the existence of a unique random fixed point.

\begin{proposition} 
	\label{cor-randomfp}
        Suppose  $\lambda\geq 1$, Assumptions \ref{ass-main} and  \ref{ass-unique} hold and 
        suppose that for any solution $(X,  \fmeas, \freneg)$  to the fluid equations
        with arrival rate $\lambda$ and initial condition   $(\fx(0), \fmeas_0, \freneg_0) \in \newcspace$, 
        \begin{equation}
          \label{rfp-ass}
          \freneg_t \Rightarrow \lambda \freneg_* \quad \mbox{ and } \quad B_t \rightarrow 1. 
        \end{equation}
         Then any random fixed point $\mu$ for the fluid equations with arrival rate $\lambda$ 
       satisfies $\mu = \delta_{z^\lambda_*}$, where $z^\lambda_* =  (x^\lambda_*, \fmeass, \lambda \frenegs)$, with $x^\lambda_*$ being the unique
        element of $\fplambda_\lambda$ in \eqref{eq-fp}.    
\end{proposition}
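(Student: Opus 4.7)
The plan is to use the stationarity built into the definition of a random fixed point, together with the hypothesized long-time convergence \eqref{rfp-ass}, to show coordinate by coordinate that the law of the initial data $(\tilde X, \tilde \nu, \tilde \eta) \sim \mu$ must be the Dirac mass at $z_*^\lambda$. Let $(X, \nu, \eta)$ be a solution to the fluid equations with arrival rate $\lambda$ and random initial condition $(\tilde X, \tilde \nu, \tilde \eta)$, as provided by Definition \ref{def-RFP}, and let $(D, K, R, S, Q, B)$ be the associated auxiliary processes. First, for any $\fsp \in \mathcal{C}_b[0, H^r)$, the hypothesis \eqref{rfp-ass} gives $\lan \fsp, \freneg_t\ran \to \lambda \lan \fsp, \frenegs\ran$ a.s., while Definition \ref{def-RFP} forces $\lan \fsp, \freneg_t\ran$ to share the law of $\lan \fsp, \tilde \eta\ran$ for every $t$; convergence in distribution to a constant identifies this common law as the Dirac mass there, and ranging over a countable measure-determining family gives $\tilde \eta = \lambda \frenegs$ a.s. The same argument applied to $B(t) \to 1$ yields $\lan \f1, \tilde \nu\ran = B(0) = 1$ a.s., and hence $\tilde X \geq 1$ a.s.\ by \eqref{eq-fnonidling}. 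Since $\lambda \frenegs$ has no atoms, Theorem \ref{th-fluid} and Remark \ref{rem-eta} make the trajectory continuous a.s., so continuity together with a countable-union argument upgrades the pointwise-in-$t$ identities to $\freneg_t = \lambda \frenegs$ and $B(t) = 1$ for every $t \geq 0$ on a single event of full probability.

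On this event, \eqref{fkprime} gives $K'(t) = \lan h^s, \nu_t\ran$ for a.e.\ $t$, so setting $\fsp = h^s$ in \eqref{f5} produces a renewal equation for $k := K'$,
\[
k(t) = z(t) + \int_0^t g^s(t - s)\, k(s)\, ds,
\qquad
z(t) := \int_{[0, H^s)} \frac{g^s(x + t)}{\bar G^s(x)} \tilde \nu (dx),
\]
with $\int_0^\infty z(s)\, ds = \lan \f1, \tilde \nu \ran = 1$ by Fubini. Arguing as in the closing step of the proof of Proposition \ref{prop-superpf}, both $g^s$ and $z$ are directly Riemann integrable, so the key renewal theorem gives $k(t) \to 1$ a.s. Substituting this into \eqref{f5} for general $\fsp \in \mathcal{C}_b[0, H^s)$, the initial-data term vanishes by dominated convergence while the convolution term converges to $\int_0^\infty \fsp(u) \bar G^s(u)\, du = \lan \fsp, \fmeass \ran$, so $\fmeas_t \Rightarrow \fmeass$ a.s. Stationarity of the $\nu$-marginal combined with the same constant-limit identification then gives $\tilde \nu = \fmeass$ a.s., and the identities $\fmeas_t \equiv \fmeass$ and $\freneg_t \equiv \lambda \frenegs$ hold for every $t$ on a full-probability event.

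To pin down $\tilde X$, note that on this event \eqref{cond-radon} gives $D(t) = \int_0^t \lan h^s, \fmeass \ran ds = t$, so \eqref{eq-fx} reduces to $X(t) = \tilde X + (\lambda - 1) t - R(t)$. Combining $B(t) \equiv 1$ with \eqref{eq-fnonidling} and \eqref{fqfx} gives $X(t) \geq 1$ and $Q(t) = X(t) - 1$, while the change of variables $y = \lambda \frenegs[0, u]$ in \eqref{fr} yields $R'(t) = \lambda G^r\bigl((F^{\lambda \frenegs})^{-1}(Q(t))\bigr)$; differentiating therefore produces the autonomous scalar ODE
\[
Q'(t) = F(Q(t)),
\qquad
F(q) := (\lambda - 1) - \lambda G^r\bigl((F^{\lambda \frenegs})^{-1}(q)\bigr),
\]
on $[0, \lambda]$, with $F$ continuous and nonincreasing, $F(0) = \lambda - 1 \geq 0$ and $F(\lambda) = -1$. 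By \eqref{eq-fp} and Assumption \ref{ass-unique}, $F$ has a unique zero $q^*$ with $q^* + 1 = x_*^\lambda$, and every trajectory of the ODE converges monotonically to $q^*$. Pathwise the deterministic flow sends $Q(0)$ to $q^*$, while stationarity forces $Q(t) \sim Q(0)$ for every $t$; uniqueness of limits in distribution then forces $Q(0) = q^*$ a.s., so $\tilde X = x_*^\lambda$ a.s. The most delicate technical step I foresee is verifying the direct Riemann integrability of $z$ under only Assumption \ref{ass-main}, which is needed to invoke the key renewal theorem in the renewal-equation step.
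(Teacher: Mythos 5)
Your overall strategy coincides with the paper's: use pathwise convergence plus the time-invariance of the marginal laws to force $\tilde\eta=\lambda\frenegs$ and $B\equiv 1$ almost surely, then identify $\tilde\nu$ with $\fmeass$, and finally reduce the $X$-component to a scalar equation for $Q$ whose rest point is pinned down by Assumption \ref{ass-unique}. The last step you carry out by a phase-line argument for the autonomous equation $Q'=F(Q)$ with $F$ continuous and nonincreasing; the paper instead treats $\lambda=1$ by monotonicity and $\lambda>1$ by an oscillation/contradiction argument, and your version is a legitimate (arguably cleaner) variant of the same idea. The minor slips there (the domain should be $[0,\lambda\langle\f1,\frenegs\rangle]$ rather than $[0,\lambda]$, and $F$ at the right endpoint need not equal $-1$) are harmless, since all that is used is that $F$ is continuous, nonincreasing, and has the unique zero $q^*=x_*^\lambda-1$.

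The genuine gap is exactly the one you flag: the identification $\tilde\nu=\fmeass$ via the key renewal theorem. Once $B\equiv1$, your renewal equation $k=z+g^s*k$ with $z(t)=\int\frac{g^s(x+t)}{\bar G^s(x)}\tilde\nu(dx)$ is correct, and $\int_0^\infty z\,ds=\langle\f1,\tilde\nu\rangle=1$, but the key renewal theorem requires $z$ (equivalently, control on $g^s$) to be \emph{directly} Riemann integrable, and under Assumption \ref{ass-main} alone this can fail: the density $g^s$, and hence $z$, may be unbounded with spikes, and Lebesgue integrability of $z$ does not yield direct Riemann integrability. The proposition as stated assumes only Assumptions \ref{ass-main} and \ref{ass-unique} plus \eqref{rfp-ass}, so this step is not justified as written. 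The paper avoids the issue entirely: its proof invokes Lemma \ref{cor-supercritlim}, whose weak-convergence conclusion rests on Corollary 4.4 and Lemma 6.2 of \cite{KasRam11}, i.e., on the representation $K(t)=\int_0^t\big(\int_{[0,H^s)}\frac{G^s(x+t-s)-G^s(x)}{\bar G^s(x)}\nu_0(dx)\big)dU_s(s)$, which works with bounded monotone functions of $t$ convolved against the renewal measure rather than with the density, and therefore needs no direct Riemann integrability of $g^s$ or $z$. To close your argument you should either route through that lemma, or restrict to the settings in which the proposition is actually applied in the paper (Assumption \ref{ass:a} in force), where $g^s\le c_h\bar G^s$ (or $g^s\le h^s(0)\bar G^s$ in the decreasing-hazard case) and, under Assumption \ref{ass:a}(\ref{ass:a1}), $z(t)\le c_h e^{-\eps_h t}$, so that the key renewal theorem applies.
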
  
\begin{proof}
  Fix $\lambda \geq 1$ and let $\mu$ be a random fixed point for the fluid equations with arrival rate $\lambda$.  
  Let $(\fx(0),\fmeas_0, \freneg_0)$ be a random element taking values in $\R_+ \times {\mathcal M}_F[0,H^s) \times {\mathcal M}_F[0,H^r)$ with
      law $\mu$ and let  $(\fx, \fmeas, \freneg)$ be the solution to the fluid equations with arrival rate $\lambda$ and initial condition   $(\fx(0), \fmeas_0, \freneg_0) \in \newcspace$. 
      Since $\eta_t \Rightarrow \lambda \frenegs$ and $B_t \rightarrow 1$ by assumption and the laws of 
      $\eta_t$ and $\fmeas_t$ are invariant in $t$ since $\mu$ is a random fixed point, we have 
      $\PP( \freneg_0 = \lambda \frenegs) = 1$ and  $\PP(B_t = \langle \f1, \fmeas_t \rangle = 1)  = 1$.  Further, by continuity of $B$,  we have
      $\PP$-almost surely,  $B_t= 1$ for  all $t \geq 0$. Then by Lemma \ref {cor-supercritlim}, it follows that  $\nu_t \Rightarrow \fmeass$ as $t\rightarrow \infty$. Since the law of  $\fmeas_t$ is invariant in $t$, it follows
      that $\PP (\fmeas_0 = \fmeass) = 1$.

      To complete the proof, it only remains to show that $\PP(X(0)=x_*^\lambda)=1$.
      Since almost surely, for all $t\geq 0$,  $B(t)=1$ and $\freneg_t=\lambda \frenegs$,
     the relations (\ref{fq}) and
      (\ref{fr}) show that   almost surely for all $t\geq 0$, $X(t)=Q(t)+1$   and 
        \[\fr(t) = \int_0^t \left(\int_0^{\fq(s)}h^r(\left(F^{\lambda \frenegs}
	\right)^{-1} (y))dy\right) ds = \lambda \int_0^t G^r(\left(F^{\lambda \frenegs}
	\right)^{-1} (Q(s)) )ds. \]
	Moreover, using the fact that almost surely for each $t\geq 0$,  $\fmeas_t = \fmeass$, and hence, $D(t) = t \langle h^s, \fmeass\rangle = t$,
        we have from  \eqref{fq}, (\ref{eq-fx}) and the fact that $E = E^\lambda$ that almost surely for each $t\geq 0$,  
	\begin{eqnarray} Q(t)&=& Q(0) +(\lambda-1)t - \lambda \int_0^t G^r(\left(F^{\lambda \frenegs}
		\right)^{-1} (Q(s)) )ds \\ &=& Q(0) +\int_0^t \left(\lambda {\bar G}^r(\left(F^{\lambda \frenegs}
		\right)^{-1} (Q(s)) ) -1 \right)ds. \label{eq-Qdev}\end{eqnarray}
	We now consider two cases. \\
	{\em Case 1: }  $\lambda=1$. In this case, we have 
	\[\int_0^t \left(\lambda {\bar G}^r(\left(F^{\lambda \frenegs}
	\right)^{-1} (Q(s)) ) -1 \right)ds = -\int_0^t G^r(\left(F^{ \frenegs}
	\right)^{-1} (Q(s)) ) ds.\]	It is clear from (\ref{eq-Qdev}) that $Q$ is decreasing on $[0,\infty)$. By the non-negativity of $Q$, $q_*:=\lim_{t\rightarrow \infty}Q(t)$ exists and the fact that $X(t) = Q(t) +1$ implies $\lim_{t \rightarrow \infty} X(t) = x_* := q_* + 1$. Note that $G^r(\left(F^{\frenegs}	\right)^{-1} (q_*) ) =0$ since, otherwise, $Q(t)\rightarrow -\infty$ as $t\rightarrow \infty$, which contradicts the non-negativity of $Q$. 
          Therefore, by  Assumption \ref{ass-unique}, the definition of $\fplambda_\lambda$ in \eqref{eq-fp} 
          and the fact  that  $\lambda - 1 = 0$, it follows that $x_*$ is equal to  the unique element $x_*^\lambda$ of $\fplambda_\lambda$.
As before, since $\mu$ is a random fixed point,  this   implies that $\PP(X(0)=x_*^\lambda)=1$.  \\ 
{\em Case 2: }  $\lambda>1$. In this case, it is clear from (\ref{eq-Qdev}) that $Q$ is differentiable on $(0,\infty)$, and
	\begin{equation}
		Q^\prime (t) = \lambda {\bar G}^r(\left(F^{\lambda \frenegs}
		\right)^{-1} (Q(s)) ) -1 \mbox{ for each } t>0.
	\end{equation}
 First note that since by \eqref{fqfreneg}, $Q(t) \leq \langle \f1, \freneg_t \rangle = \lambda \langle \f1, \freneg_* \rangle$,
              $Q(t)$ is bounded. 
            We now argue by contradiction to show that $q_*=\lim_{t\rightarrow \infty}Q(t)$ exists. Suppose this is not the case. Then, since $Q$ is
            bounded on $[0,\infty)$,  $Q$  must oscillate for the limit  not to exist.   By the continuity of $Q$,   this implies there  must 
          exist two sequences of times $\{t_n, n\geq 1\}$ and $\{s_n, n\geq 1\}$ with $t_n \rightarrow \infty$, $s_n\rightarrow \infty$ as $n\rightarrow \infty$, and  $\varepsilon > 0$ such that $|Q(t_n)-Q(s_n)|>\varepsilon$ for   all $n$ sufficiently large, 
          and $Q^\prime (t_n)=Q^\prime (s_n)=0$ for each $n\geq 1$.  By \eqref{eq-Qdev}, the latter relation implies 
          \[  G^r(\left(F^{\lambda \frenegs}
	\right)^{-1} (Q(t_n)) ) = G^r(\left(F^{\lambda \frenegs}
	\right)^{-1} (Q(s_n)) ) = \frac{\lambda - 1}{\lambda} \mbox{ for all } n\geq 1. \]
	Since $Q$ is bounded, there exist $0 \leq q_i \leq \lambda \langle \f1, \eta^r \rangle$, $i = 1, 2$ and 
        is a subsequence $\{n_k,k\geq 1\}$ 
        such that $Q(t_{n_k})\rightarrow q_1$ and $Q(s_{n_k})\rightarrow q_2$ as $k \rightarrow \infty$. 
        It follows that $|q_1-q_2|\geq \varepsilon$ and $ G^r(\left(F^{\lambda \frenegs}
	\right)^{-1} (q_1))  = G^r(\left(F^{\lambda \frenegs}
	\right)^{-1} (q_2 ))=\frac{\lambda-1}{\lambda}$, where
        we have used the fact that $G^r$  and $\left(F^{\lambda \frenegs} \right)^{-1}$  are continuous,
          with the latter continuity holding because  $\lambda \frenegs$ has a  density $\lambda \bar{G}^r$ that is strictly positive on its support. 
          By Assumption \ref{ass-unique}, we have $q_1=q_2$ which contradicts $|q_1-q_2|\geq \varepsilon$. Thus, $q_*=\lim_{t\rightarrow \infty}Q(t)$ exists and then $\lambda {\bar G}^r(\left(F^{\lambda \frenegs}\right)^{-1} (Q(t)) ) -1=0$ since otherwise by \eqref{eq-Qdev} $Q$ will not have a limit. We can then argue as in Case 1 that
          $X(t)\rightarrow q_* + 1 = x_*^\lambda$, and thus $\PP(X(0) = x_*^\lambda) = 1$. 
	This completes the proof of the theorem. 
\end{proof}

\section{Results Regarding the Multiclass Model}
\label{sec-multiclass}
\beginsec

Here we consider the model with multiple classes operating under a fixed priority discipline. This model,
with general class-dependent service time  and patience time distributions, 
was analyzed in \cite{AtaKasShi13} and convergence at the fluid scale, uniformly
on compact time intervals, was established.
Here, we study the long-time behavior under the additional assumption that 
the service time distribution does not depend on the job class,  and its hazard rate function is bounded away from zero and infinity, 
	that is, satisfies  Assumption \ref{ass:a}(\ref{ass:a1}).   
	For simplicity of exposition, we also assume that the reneging distributions are exponential (but may depend
	on the job
	class) since the main motivation is to deduce the optimality of a certain priority 
	scheduling rule (known as the $c\mu/\theta$ rule; see details below) discussed in \cite{AtaKasShi13}, which is
	not expected to hold beyond the exponential reneging case. As shown in \cite{AtaKasShi13},  this optimality result relies on the 
	convergence of the invariant distributions of  the fluid-scaled process, as $N \to \infty$,
	to the  unique element of the invariant manifold of the fluid limit (under assumptions that ensure
	such uniqueness).  However, the convergence result in  \cite{AtaKasShi13} (specifically Theorem 4.3 therein)
	suffers from the same flaw as that described for the single-class case in Remark \ref{rem-fix};
	namely, from the proof in \cite{AtaKasShi13} one can only deduce that the invariant distributions of the $N$-server systems exist, 
	are tight and that any subsequential limit of the sequence of invariant distributions must be a {\em random} fixed point of the fluid equations
	(defined analogously to Definition \ref{def-RFP}).  As explained in Remark \ref{rem-fp} in the single-class setting, 
	in order to show that there is a unique random fixed point
	(which must then coincide with the unique element of the invariant manifold)
	it suffices to establish the long-time convergence of the solution of the fluid equations with any initial condition
	to the unique element of the invariant manifold.  Thus, the limit interchange result that  we prove
	here fixes the flaw in  the main optimality result of \cite{AtaKasShi13} under the additional  assumptions
	on the service distribution stated above.  This leaves open the  question of whether there is also a
	limit interchange for class dependent service times and when
	hazard rates are not necessarily bounded.
	We present the fluid equations in Section \ref{subs-multi1}, and
	then state and prove the theorem in Section \ref{subs-multi2}.

\subsection{Fluid Model Equations for the Multiclass System}
\label{subs-multi1}

Analogous to the single-class case, 
for each class $i\in\{1,\ldots,J\}$, we
denote by $B_i$, $X_i$ and $Q_i$  nonnegative functions 
that represent the fluid analogs of the number in service, number in system and number in queue, let the 
nonnegative, nondecreasing functions $D_i$, $K_i$ and $R_i$ represent the fluid analogs of  cumulative class $i$ 
departures from service, cumulative entries to service and cumulative reneging,  and
let $\nu_i$ represent the fluid analog of the
measure-valued function that encodes the ages of class $i$ jobs in service.
Since we assume exponential reneging times, we will not require the potential reneging measures $\eta_i$, but only the reneging rate $\theta_i > 0$.   
Also, let $(X,\theta, \nu,B,Q,D,K,R)$ be the corresponding vector-valued processes whose $i$th component is given by
	$(X_i,\theta_i, \nu_i,B_i,Q_i,D_i,K_i,R_i)$.  We describe the fluid  equations only for the  special case when all
	service distributions are identical, with common cumulative distribution  function $G = G^s$, 
	hazard rate function $h = h^s$ and  support $[0,H) = [0,H^s)=[0,\iy)$, and
	arrival rates $\lambda_i  > 0, i = 1, \ldots, J$.

Before we present the fluid model equations, let us comment on the  special form that
the single-server fluid model equations
(of Definition \ref{def-fleqns})
take when the reneging is exponential. In this case, the reneging hazard rate is constant,
namely $h^r(t)=\theta$ for all $t$, and thus equation \eqref{fr} takes the form 
\[
R(t)=\theta\int_0^tQ(s)ds, 
\]
and thus there is no longer any need to keep track of the potential reneging measure
 $\eta$. 
Accordingly, in the multiclass setting, our fluid model is an extension
of such a modified set of fluid equations where the equation of $R$ is similar to
the above display, and from which $\eta$ is absent.

	\begin{definition}
		\label{def-fluidmulti}
		Given arrival and reneging rate vectors $\lambda \in (0,\infty)^J$ and $\theta \in (0,\infty)^J$, and 
		initial   condition $(X(0), \nu_0) \in [0,\infty)^J \times (\calM_F[0,\iy))^J$,
		    a tuple
                    $(B,X,Q,D,K,R,\nu)
                    \in (\calD_{\R^J_+}(\R_+))^3 \times  (\calD^+_{\R^J_+}(\R_+))^3 \times (\calD_{\calM_F[0,\iy)}(\R_+))^J$  
		is said to be a {\it solution to the multiclass fluid equations 
			with initial condition $(X(0),\nu_0)$ and arrival and reneging rate vectors $\lambda$ and $\theta$} if 
		equations \eqref{03+}--\eqref{04+} below are satisfied: 
		For $\ph\in {\mathcal C}^1_c([0,\iy)\times\R_+)$, and $t \geq 0$, 
		\begin{align}
			\notag
			\mean{\ph(\cdot,t),\nu_{i,t}} &= \mean{\ph(\cdot,0),\nu_{i,0}}
			+\int_0^t\mean{\ph_x(\cdot,s)+\ph_t(\cdot,s),\nu_{i,s}}ds\\
			&\quad
			\label{03+}
			-\int_0^t\mean{h(\cdot)\ph(\cdot,s),\nu_{i,s}}ds+\int_0^t\ph(0,s)dK_{i}(s),
		\end{align}
		where $B, D, R$ are the auxiliary processes  given by  
		\begin{equation}
			\label{04+}
			B_{i}(t) = \mean{1,\nu_{i,t}}, \qquad
			D_{i}(t) = \int_0^t  \langle h, \nu_{i,s} \rangle ds, \qquad
			R_{i}(t) = \theta_i\int_0^tQ_{i}(s)ds.
                        \end{equation}
 and for $t \geq 0$, $K, B, D$ satisfy the following  balance equations and basic relations: 
		\begin{align}
			\label{01+}
			B_i &= B_{i,0}-D_i+K_i,\\
			\label{07+}
			X_i(t) &= X_{i,0}-D_i(t)+\lambda_it-R_i(t),\\
			\label{08+}
			Q_i &= X_i-B_i,
		\end{align}
		as well as  conditions imposing work conservation and non-preemptive priority: 
		\begin{align}
			\label{02+}
			& I:= 1-\sum_{i=1}^JB_i=\Big(1-\sum_{i=1}^JX_i\Big)^+,\\
			\label{10+}
			&K_{i}(t)=\int_{[0,t]}1_{\{\sum_{j=1}^{i-1}Q_{j,s}=0\}}dK_{i}(s),\qquad i\ge2,\, t\ge0. 
		\end{align}  
	\end{definition}

Under the assumption of bounded reneging hazard rates, which is indeed fulfilled
when the reneging distribution is exponential, it was shown in \cite[Theorem 3.1]{AtaKasShi13}
that uniqueness holds for solutions of the fluid  equations
for any given data and initial conditions.
Existence of solutions was also established there by showing that
the scaling limit of the underlying queueing system is a solution.

By the same argument given in the proof of Theorem \ref{th-fluid}, it follows from the
results  in Theorem 4.1 of \cite{KasRam11} that 
the measure-valued age  equation \eqref{03+} implies  that for every $\fsp \in {\mathcal C}_b([0,\iy))$ or $\fsp = h$, 
\begin{equation}\label{60}
	\lan \fsp,\nu_{i,t}\ran=\int_{[0,\iy)}\frac{\bar G(x+t)}{\bar G(x)}
	\fsp (x+t)\nu_{i,0}(dx)
	+\int_{[0,t]}\bar G(s-x)\fsp (s-x)dK_{i}(s),  
\end{equation}
where recall $\bar{G} = 1-G$. 
In what follows, given a vector or vector-valued process $Y$, we use  $\tilde Y$ to be generic notation for the sum $\sum_{i=1}^JY_i$. By \eqref{60},
$\tilde \nu$ and $\tilde K$ satisfy, for every $\fsp \in {\mathcal C}_b([0,\iy))$ or $\fsp = h$, 
\begin{equation}\label{61}
	\lan \fsp,\tilde\nu_{t}\ran=\int_{[0,\iy)}\frac{\bar G(x+t)}{\bar G(x)}
	\fsp(x+t)\tilde\nu_{0}(dx)+\int_{[0,t]}\bar G(t-s)\fsp(t-s)d\tilde K (s). 
\end{equation}
In other words,  \eqref{f5} holds with $(\nu,K)$ and $G^s$ replaced with $(\tilde\nu,\tilde K)$ and $G$.
We now argue that, $\tilde{K}$ and $\tilde{B}$ satisfy the analog of \eqref{fkprime}.
First, note that by \eqref{04+}, $\tilde B=\lan1,\tilde\nu\ran$, and if
$\tilde B_t<1$ then, on an open interval containing $t$ we have $\tilde X<1$ due to  \eqref{02+}. 
Hence, $\tilde Q=0$ by \eqref{08+} and  $\tilde R=0$ by \eqref{04+}. Hence, subtracting
\eqref{07+} from \eqref{01+}, $\tilde K=\tilde E+c$ on this interval (where $c$
does not depend on time), and so   $\tilde K^\prime(s)=\tilde\la$ holds on the interval.
Combining this with 
\[
\tilde K(t) =\tilde B(t)-\tilde B(0)+\int_0^t\lan h,\tilde\nu_s\ran ds, 
\]
which follows from \eqref{01+} and \eqref{04+}, we obtain,
exactly as in \cite[Theorem 3.2]{AtaKasShi13}, that
for a.e.\ $t$, $\tilde{K}^\prime (t)=\tilde k(t)$ where
\begin{equation}\label{110}
	\tilde k (t) =\begin{cases}
		\lan h,\tilde\nu_t\ran, & \tilde B(t) =1,\\
		\tilde\la, & \tilde B(t) <1.
	\end{cases}
\end{equation}

\subsection{Results for the Multiclass System}
\label{subs-multi2} 

We will be interested in the supercritical case 
where $\sum_i\la_i>1$ and $\theta_{\min}=\min_i\theta_i>0$.
Let $\rho_i$, $i=1,\ldots,J$ be characterized by
\[
\sum_{i=1}^j\rho_i=\Big(\sum_{i=1}^j\la_i\Big)\w1,\qquad j=1,\ldots,J,
\]
and let
\[
q_i=\frac{\la_i-\rho_i}{\theta_i},\qquad i=1,\ldots,J.
\]

We now state the main result.

\begin{theorem}\label{th2}
	Suppose that $h$ satisfies Assumption \ref{ass:a}, and 
	$\lambda, \theta \in (0,\infty)^J$ are such that
	$\tilde{\la} = \sum_{i=1}^J\la_i>1$, and 
     $(X_0,\nu_0) \in \R_+^J \times ({\mathcal M}_F[0,\iy))^J$ satisfies 
		$1 - \langle \f1, \tilde{\nu} \rangle = (1 - \tilde{X})^+$. 
		Then  any solution $(B, X, Q, D, K, R,\nu)$ to the multiclass fluid  equations 
		with initial condition  $(X_0,\nu_0)$ and arrival and reneging rate vectors $\lambda$ and  $\theta$ 
		satisfies  $\nu_{i,t} \Rightarrow \rho_i\nu^*$ and 
	$Q_i(t)\to q_i$ as $t \to \iy$ for $i = 1, \ldots, J$.
\end{theorem}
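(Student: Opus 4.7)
The plan is to first apply the single-class supercritical analysis to the aggregate process $(\tilde\nu,\tilde K,\tilde B)$, and then to extract per-class convergence by a top-down induction on the priority order using \eqref{10+}. Observe that the aggregate pair $(\tilde\nu_t,\tilde K_t)$ satisfies the age representation \eqref{61} and the identification of $\tilde K'$ in \eqref{110}, which are structurally identical to \eqref{f5} and \eqref{fkprime} used in the proof of Proposition \ref{prop-superpf}, with $\tilde\lambda$ playing the role of $\lambda$ (the case $B=1$, $Q=0$ in \eqref{fkprime} is absorbed into $\tilde B<1$ in \eqref{110}, which only simplifies matters). Since $\tilde\lambda>1$ and Assumption \ref{ass:a}(\ref{ass:a1}) is in force, the extended relative-entropy Lyapunov argument of Section \ref{subs-pf1} applies verbatim to yield $T<\infty$ with $\tilde B(t)=1$ for all $t\ge T$, $\tilde\nu_t\Rightarrow\nu^*$, and $\langle h,\tilde\nu_t\rangle\to 1$ as $t\to\infty$.

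For the inductive step, set $\xi_{j,t}:=\sum_{i=1}^j\nu_{i,t}$, $K^{(j)}:=\sum_{i=1}^jK_i$, $\tilde B^{(j)}:=\sum_{i\le j}B_i$, $\tilde Q^{(j)}:=\sum_{i\le j}Q_i$, and $\tilde\lambda^{(j)}:=\sum_{i\le j}\lambda_i$. Summing \eqref{60} over $i\le j$ shows that $\xi_j$ satisfies the same type of renewal-style age equation, driven by $K^{(j)}$; the priority constraint \eqref{10+} together with \eqref{01+}--\eqref{08+} identifies its derivative as
\begin{equation*}
\dot K^{(j)}(t)=\begin{cases}\tilde\lambda^{(j)}&\text{if }\tilde B(t)<1,\\ \langle h,\xi_{j,t}\rangle &\text{if }\tilde B(t)=1\text{ and }\tilde Q^{(j)}(t)>0,\\ \min\bigl(\tilde\lambda^{(j)},\,\langle h,\tilde\nu_t\rangle\bigr) & \text{if }\tilde B(t)=1\text{ and }\tilde Q^{(j)}(t)=0.\end{cases}
\end{equation*}
I would then prove by induction on $j$ that $\tilde B^{(j)}(t)\to\rho^{(j)}:=\tilde\lambda^{(j)}\wedge 1$, $\xi_{j,t}\Rightarrow\rho^{(j)}\nu^*$, and $\tilde Q^{(j)}(t)\to\sum_{i\le j}q_i$. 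When $\tilde\lambda^{(j)}\le 1$ the top-$j$ aggregate is subcritical, and the argument of Section \ref{subs-pf2subcrit} adapts to give $\tilde B^{(j)}(t)\to\tilde\lambda^{(j)}$ with $\tilde Q^{(j)}(t)\to 0$ (hence $q_i=0$ for each such $i$); when $\tilde\lambda^{(j)}>1$ the top-$j$ aggregate is supercritical, and the Lyapunov argument applied already to the full aggregate in the previous paragraph, aided by the inductive control of $\xi_{j-1}$, yields $\tilde B^{(j)}(t)\to 1$ and $\xi_{j,t}\Rightarrow\nu^*$. Taking differences produces $\nu_{j,t}\Rightarrow\rho_j\nu^*$; then, since $h$ is bounded by Assumption \ref{ass:a}(\ref{ass:a1}) and $\nu^*$ is absolutely continuous, the continuous mapping theorem gives $\langle h,\nu_{j,t}\rangle\to\rho_j$, and subtracting \eqref{01+} from \eqref{07+} and using \eqref{04+} produces the linear-feedback relation $\dot X_j(t)=\lambda_j-\langle h,\nu_{j,t}\rangle-\theta_jQ_j(t)$, which together with $B_j(t)\to\rho_j$ yields $Q_j(t)\to(\lambda_j-\rho_j)/\theta_j=q_j$ via a standard comparison argument for linear ODEs.

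The main obstacle lies in the inductive step when the top-$j$ aggregate is supercritical but lower-priority classes transiently carry a non-vanishing amount of mass in service: in that regime $\xi_j$ does not, on its own, obey clean single-class dynamics, because non-preemption allows $\sum_{i>j}B_i$ to be positive even when $\tilde B=1$ and $\tilde Q^{(j)}>0$. The observation I would rely on to overcome this is that whenever $\tilde Q^{(j)}(t)>0$ the priority rule \eqref{10+} forbids any new entries from classes $i>j$, so $\sum_{i>j}B_i$ can only decrease (through service completions) on those intervals; combined with the fact that in the supercritical case these intervals eventually exhaust $[t_0,\infty)$, and with the exponential-reneging bound on $\tilde Q$, this drives the lower-class mass to its correct asymptotic value and closes the induction.
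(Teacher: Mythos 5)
Your overall architecture---first apply the single-class supercritical analysis to the aggregate $(\tilde\nu,\tilde K,\tilde B)$, then peel off classes in priority order---is the same as the paper's, but the execution has genuine gaps, starting with the displayed identification of $\dot K^{(j)}$. In the crucial regime $\tilde B(t)=1$ and $\tilde Q^{(j)}(t)>0$, the priority rule \eqref{10+} gives $K_i'=0$ for $i>j$ while $\tilde B'=0$, so $\frac{d}{dt}K^{(j)}(t)=\tilde K'(t)=\langle h,\tilde\nu_t\rangle$, the departure rate of \emph{all} classes, not $\langle h,\xi_{j,t}\rangle$: servers freed by low-priority completions are immediately seized by the waiting high-priority fluid. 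Consequently the top-$j$ aggregate does not obey single-class fluid dynamics (its boundary input involves $\tilde\nu$, and the non-idling relation \eqref{eq-fnonidling} fails for it, since classes $i>j$ can hold servers while $\tilde Q^{(j)}>0$), so the claims that the arguments of Sections \ref{subs-pf2subcrit} and \ref{subs-pf1} ``adapt'' or apply to $\xi_j$ are unjustified as stated. The paper exploits exactly the corrected identity instead: since $\langle h,\tilde\nu_t\rangle\to1$ by the aggregate result, the queue $\hat Q$ of the strictly subcritical block has drift at most $\hat\lambda-\langle h,\tilde\nu_t\rangle\le-\eps_0<0$ whenever positive, hence vanishes after finite time, after which $K_i'=\lambda_i$ and \eqref{60} gives $\nu_{i,t}\Rightarrow\lambda_i\nu^*$ for $i\le\ell$.

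Beyond this, the two hardest points are asserted rather than proved. (i) In the strictly supercritical partial case you need that $\{\tilde Q^{(j)}>0\}$ eventually contains a half-line; you call this a ``fact'', but it is the heart of the matter (the paper's Step 2: a growth argument on $X^\#$ shows $Q_m$ becomes positive, a drift/barrier comparison keeps it positive, and only then does $B_i'=-\langle h,\nu_{i}\rangle\le-\eps_h B_i$ for $i>m$ kill the low-priority service content, so that $\nu_{m,t}\Rightarrow\rho_m\nu^*$ follows by subtraction from $\tilde\nu_t\Rightarrow\nu^*$). (ii) The borderline case $\sum_{i\le m}\lambda_i=1$ falls in neither branch of your induction: your subcritical branch requires strict subcriticality (both the Section \ref{subs-pf2subcrit} argument and the drift bound need $\tilde\lambda^{(j)}<1$), and your supercritical branch needs $\tilde\lambda^{(j)}>1$; the paper devotes a separate Step 3 to it, proving $k^\#(t)\to1$, $B^\#(t)\to1$ and $Q_m(t)\to0$ by a different route. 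Finally, deducing $\langle h,\nu_{j,t}\rangle\to\rho_j$ from weak convergence via the continuous mapping theorem is not licensed under Assumption \ref{ass:a}(\ref{ass:a1}), where $h$ is only bounded and measurable; the paper avoids this by working with the identities \eqref{111} for $K_i'$ together with the ODE fact \eqref{DE} to obtain $Q_i(t)\to q_i$.
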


\begin{remark}
	{\em 
	This validates Theorem 5.1 of \cite{AtaKasShi13} in the special case where for all $i$, 
	$h^s_i=h$, with  $h$ satisfying Assumption \ref{ass:a}. }
\end{remark}

\begin{remark}
	\label{rem-multi}
	{\em  
  The characterizations in \eqref{60} and \eqref{110}  show that the  aggregate
			processes $(\tilde X, \tilde\nu)$ and $(\tilde D,\tilde K, \tilde R, \tilde  S, \tilde Q, \tilde B)$ satisfy the fluid equations of the single class case (see  Definition \ref{def-fleqns}), subject to the simplification described at the beginning of Section \ref{subs-multi1},
			where in particular reneging is given directly by \eqref{04+} and
			the process $\eta$ is not used. 
			Hence, in the supercritical setting
			$\tilde\la>1$,
			we may conclude from Theorem \ref{th-main2}(2) that, with $\nu^*(dx)=\bar G(x)dx$,
			one has $\tilde\nu_t\Rightarrow \nu^*$ and that there exists
			$T < \infty$ such that $\tilde B_t=1$ for all $t \geq T$.
			Moreover, by Proposition \ref{prop-superpf}, $\lan h,\tilde\nu_t\ran\to1$.
			As a result, by \eqref{fkprime}, one has $\tilde k_t=\lan h,\tilde\nu_t\ran$
			for all large $t$, and hence also $\tilde k_t\to1$.
                        }
	\end{remark}

\begin{proof}[Proof of Theorem \ref{th2}]
	In this proof, the special case in which there exists $ i_0  \in \{1, \ldots, J-1\} $ such that $\sum_{i=1}^{i_0}\la_i=1$
	is called the {\it borderline case}, and the more typical case,
	where such $i_0$ does not exist, is called the {\it typical case}.
	
	If $\la_1<1$, set $\ell := \max\{j:\sum_{i=1}^j\la_i<1\}$,
	otherwise let $\ell=0$. Also, set $m=\ell+1$.
	Then, since by assumption $\tilde{\la} > 1$,   by the definition of $m$,
	we have  $\sum_{i=1}^m\la_i=1$ [respectively, $>1$] in the borderline case
	[respectively, in the typical case].
     Also, in what follows,  we use the hat (when $\ell \geq 1$) and $\#$ notation for summation up to $l$ and, respectively, $m$, as in
		\begin{equation}
			\label{hat-notation}
			\hat Y=\sum_{i=1}^\ell Y_i,  \quad \mbox{  and } \quad   Y^{\#} =\sum_{i=1}^m Y_i, \qquad   Y= \lambda, X, \nu, D, K, R, B. 
		\end{equation}
		(in addition to the notation already introduced, $\tilde Y=\sum_{i=1}^J Y_i$).
	
	The structure of the proof is as follows. In Step 1 we prove the assertions for $i\le\ell$.
	Steps 2 and 3 address the remaining classes $i\ge m$ in the typical and borderline cases, respectively. 
	First, note that since $\tilde \la >1$, by Remark \ref{rem-multi},  there exists $T < \infty$ such  that 
	\begin{equation}
		\label{dep-limit}
		\tilde k (t) \to 1,   \lan h,\tilde\nu_t\ran\to1  \mbox{ as  } t \ra \iy \quad \mbox{ and } \quad  \tilde B(t) = 1 \mbox{ for all } t \geq T. 
	\end{equation}
	
	\noindent 
	{\bf Step 1.} Consider the case $\ell\ge1$ (that is, $\la_1<1$).
	In this step we consider classes  $1 \leq i \leq  \ell$ and  establish the claim that there exists $t_1 < \infty$ such that 
		$Q_i(t)=0$ for all  $t \geq t_1$, and moreover,  that $\nu_i(t)\to \rho_i\nu^*$ as $t \ra \iy$. 
	(Note that for $i\le\ell$, $\la_i=\rho_i$, hence the asserted convergence
	$Q_i(t)\to q_i=0$ would then  follow).

	 Recalling the notational convention \eqref{hat-notation}, by the definition  of $\ell$,
		$\hat\la  = \sum_{i=1}^\ell  \lambda_i <1$, and so
	there exist $\eps_0>0$ and $ 0 < t_0 <  \infty$ such that
	$\lan h,\tilde\nu_t\ran>\hat\la+\eps_0$ for all  $t\ge t_0$.
		If $\hat Q(t)= 0$ for all $t \geq t_0$ then the claim follows trivially.  So, we now consider the converse case,  when
		${\mathcal O} := \{t > t_0: \hat Q (t) > 0\}$ is non-empty. 
		Since $\hat Q$ is continuous, ${\mathcal O}$ is open and is a union of countable open intervals. For a.e.\ $s$ in each such interval, 
		by \eqref{10+}, for  all $i>\ell$, 
		$K_i^\prime(s) = 0$. Moreover, since \eqref{08+} and \eqref{02+} together 
		show that, $\tilde Q(t)>0$ implies $\tilde B(t)=1$  for any $t> 0$, we conclude in particular that 
		$\tilde B(s) = 1$. In turn,  \eqref{01+}, \eqref{08+}, \eqref{04+} and the fact that $\tilde{R}$ is non-decreasing
		together imply that for a.e.\ $s \in {\mathcal O}$,  
		$\tilde D^\prime(s) =\tilde K^\prime (s) = \lan h,\tilde\nu_s\ran$. Thus, we have for a.e. $s > t_0$, 
		\begin{align*}
		 \hat	Q(s) >  0 \quad \Rightarrow \quad \hat Q^\prime (s)=\hat\la-\hat R^\prime (s)-\hat K^\prime (s)\le \hat\la -\lan h,\tilde\nu_s\ran \le-\eps_0. 
		\end{align*}
		Thus,  there must exist a finite time, $t_1\ge t_0$, when $\hat Q(t_1) = 0$. Since the last display continues  to 
		hold for all $s \geq t_1$,  applying Lemma \ref{lem:f} with $f = -Q$, $T= t_1$, $S= \infty$   and $c=0$, it follows that 
		for all $t\geq t_1$, $\hat{Q}(t) = 0$, or equivalently,  $Q_i(t)=0$ for all $i\le\ell$.   
	
	 To finish proving  the  claim in Step 1, it only remains to show that $\nu_i(t)\to\rho_i \nu^*=\la_i\nu^*$ for $i\le\ell$.
	 For $t \geq t_1$, it follows from \eqref{08+} and \eqref{04+}, respectively, that for $i\le\ell$,  $X_i(t)=B_i(t)$ and 
		$R_i^\prime(t) = 0$. 
		Hence by \eqref{01+}--\eqref{07+},
		$K_i^\prime(t)=\la_i$ for such $i$ and $t$.
		Substituting these relations in \eqref{60} and taking the large $t$ limit yields (exactly as in the proof of Lemma \ref{lem-reneg}),  
		the convergence of $\nu_i(dx)$ to $\la_i\bar G(x)dx$ as asserted.
	
	\noindent 
	{\bf Step 2.}
	In this step we treat the typical case, proving
	the claim for all the remaining classes $i\ge m=\ell+1$ (where possibly $\ell=0$, $m=1$).
	 Recall the notation in \eqref{hat-notation} and note that   in this case one has
		$\lambda^{\#}=\sum_{i=1}^m\la_i>1$. By the definition of $m$ and $\rho_i$, this implies
	$\rho_m<\la_m$.

       Let $t_1 <  \infty$ be as in Step 1, and assume without loss of generality
		that $t_1 \geq T$, where $T$ is as in  \eqref{dep-limit}.  Then  given $\eps  \in (0,1-\lambda^{\#})$, there exists 
		$t_2 = t_2(\eps) \ge t_1$  such that for all $t\ge t_2$, $|\lan h,\tilde\nu_t\ran-1|<\eps,$  $|\tilde{k}(t) - 1| < \eps$ and $\tilde B(t)=1$.  
		Then \eqref{04+}  implies that $\tilde D_t^\prime (t) =\lan h,\tilde\nu_t\ran \le (1+\eps)$, and 
		since clearly, $(D^\#)' \le \tilde D^\prime$, on $[t_2,\iy)$, we have for all   $\eps_1 \leq   \lambda^{\#} - 1 - \eps$, 
		\[
		\frac{dX^\#}{dt} =\la^\#- \frac{dD^\#}{dt} - \frac{dR^\#}{dt} 
		\ge\la^\#-(1+\eps)-\theta_mQ_m\ge\eps_1-\theta_mQ_m. 
		\] 
		We now argue by contradiction to prove the claim  that there exists $t_3\ge t_2$ such that $Q_m(t_3)>0$.
		Indeed, assume $Q_m$ vanishes on the whole
		interval $[t_2,\iy)$. 
		Then the last display shows that $X^\#(t)\to\iy$, and hence by \eqref{08+} and the fact that \eqref{02+} implies $\tilde{B}$ lies in
		$[0,1]$, 
		$Q^\#(t)\to\iy$. 
		But since $\hat{Q}$ vanishes on $(t_1,\infty) \supset (t_2,\infty)$  by Step 1, this implies
		$Q_m (t) = Q^\#(t) - \hat{Q}(t) = Q^\#(t) \to\iy$, which contradicts the assumption that $Q_m$ is identically zero on
		$[t_2,\iy)$.   This proves the claim.

		Let  $t_3\ge t_2$ be such that $Q_m(t_3)>0$, and let ${\mathcal O}_m  := \{s \in [t_3, \infty): Q_m(s) > 0\}$. 
		We show below that ${\mathcal O}_m=  [t_3,\infty)$. 
		Towards this goal, we will find it more convenient to work
	with the balance equation for $Q^\#$ than with $X^\#$.
	That is, using \eqref{01+}--\eqref{08+} and \eqref{04+}, note that
	\begin{equation}\label{111}
		Q_i^\prime =\la_i-K_i^\prime -\theta_iQ_i, \quad i = 1, \ldots, J,  \qquad \mbox{ and } \qquad     \frac{dQ^\#}{dt}=\la^\#-\frac{dK^\#}{dt}- \sum_{i=1}^m\theta_i Q_i^\#. 
	\end{equation}
	On any open interval in ${\mathcal O}_m$,  $Q_m>0$ and $\hat{Q}=0$, and hence 
	the priority rule \eqref{10+} implies 
 $dK^\#/dt=d\tilde K/dt = \tilde k$, where recall 
		$|\tilde  k(t) - 1| <\eps$. Thus, for  all $t \geq t_3$,   we have 
		\[
		Q_m(t) >0 \quad  \Ra \quad Q_m^\prime(t)=\frac{dQ^\#}{dt}(t) \ge\la^\#-(1+\eps)-\frac{dR^\#}{dt}(t)\ge\eps_1-\theta_mQ_m(t). 
		\]
		Since this  is strictly greater than $\eps_1/2$ whenever $Q_m(t) < \eps_1/2\theta_m$, 
		this clearly implies $Q_m(t) > 0$ for all $t \in  [t_3,\iy)$, as claimed.
		In turn, by the priority rule \eqref{10+}, this implies that on $[t_3,\iy)$, $K_i^\prime=0$ for all $i>m$, and therefore by \eqref{01+}
		and \eqref{04+}, $B_i^\prime = - \langle h, \nu_i\rangle \leq  - \eps_h B_i$, where recall that $\eps_h$ is the strictly
		positive lower
		bound on $h$.
		This shows that for $i>m$,  $B_i (t)\ra 0$ as $t \ra\infty$ and hence, 
		$\nu_i\Rightarrow 0$.
		As $t \to \iy$, since  we already have  convergence  of the aggregate  $\tilde\nu_t\Rightarrow \nu^*$ (see Remark \ref{rem-multi})
		and  $\nu_{i,t}\Rightarrow \la_i\nu^*$ for all $i<m$ (by Step 1),  we conclude
		that $\nu_{m,t}\Rightarrow\rho_m\nu^*$.

       To complete Step 2, it only remains to address the convergence of $Q_i$, $i\ge m$.
		Since, as argued above, for  $t \in [t_3,\iy)$,   $K^\prime_i(t)=0$ for $i>m$, \eqref{111} shows that  $Q_i(t)\to \la_i/\theta_i=q_i$ as $t \to \iy$. 
		As for $Q_m$, note that since  on $[t_3,\iy)$, for $1 \leq i \leq \ell =  m-1$, $Q_i = 0$ by Step 1,  \eqref{111} shows that   $K_i^\prime = \la_i$, or equivalently,
		$\hat{K}^\prime = \hat{\la}$.  Thus, 
		denoting $e(t):=\tilde k(t)-1$, we have $e(t)\to0$, and  recalling that $\rho_m = \hat{\lambda} - 1$,
		\[
		K_m^\prime(t) =\tilde K^\prime (t) -\hat{K}^\prime_i(t) =\tilde k(t) -\hat{\la}
		=(\rho_m+e(t)). 
		\]
		Thus, we obtain
		\[
		Q_m^\prime(t)=(\la_m-\rho_m-e(t))-\theta_mQ_m(t).
		\]
		This implies that as $t \ra \infty$, $Q_m(t)$ converges to $q_m= (\la_m-\rho_m)/\theta_m$. 
		Here, we used the elementary fact that for a differentiable function $u$ on $[0,\infty)$, 
		\begin{equation}
			\label{DE}
			u^\prime(t) =w (t) -\theta u(t) \mbox{ and } u(0) = u_0 \qquad   \Rightarrow  \qquad u(t)=\int_0^te^{-\theta(t-s)}w(s)ds+u_0e^{-\theta t},
		\end{equation}
		which converges to $c/\theta$ whenever $w(t)\to c$ as $t\to\iy$.

	\noindent 
	{\bf Step 3.}
	Lastly, we consider the borderline case, and establish the assertions regarding
	the remaining classes $i\in \{m, \ldots, J\}$. In this case $\la^\#=\sum_{i = 1}^m \la_i=1$.
	
	As in \eqref{110}, the priority structure  specified by \eqref{10+} dictates that $dK^\#/dt=k^\#$, 
		where $k^\#(t)$  is given by $\lan h,\tilde\nu_t\ran$ when $B^\# (t)=1$
		and equal to $\la^\#$ when $B^\# (t)<1$.  Since $\la^\#=1$ and by \eqref{dep-limit}, $\lan h,\tilde\nu_t\ran\to1$ 
	we infer that $k^\# (t)\to1$ as $t\to\iy$.   Summing \eqref{60} over $i\le m$,
	using $\int_{0}^\infty\bar G(x)dx=1$ and applying the test function $\fsp = \f1$ shows
	that $B^\#_t\to1$ as $t\to\iy$ (where the application of bounded continuous $\fsp$ can be justified in the usual 
	manner). Applying general compactly supported
	test functions gives $\nu^\#_t\Rightarrow\nu^*$, where $\nu^*(dx) = \bar{G}(x)dx$. 
	Given the convergence already established for $\nu_{i,t}$, $i\le\ell =m-1$, the convergence
	of $\nu^\#_t$ yields
	that of $\nu_{m,t}\to\la_m\nu^*$ (note that in the borderline case currently considered,
	$\la_m=\rho_m$).   Moreover, the fact that $B^\# (t) \to1$ implies that $\sum_{i=m+1}^JB_{i}(t)=\tilde B(t)-B^\#(t)\to0$,
	and hence, for all $i>m$, $B_{i}(t)\to0$ and consequently $\nu_{i,t}\Rightarrow 0$. 
	
	Next we show that $Q_i(t)\to q_i$ for $i>m$, for which we again use \eqref{111}.
	Combining the convergence  $k^\#(t)\to1$ that we just showed with  $\tilde k(t)\to 1$ from \eqref{dep-limit}, 
	it  follows that  $k_{i}(t)\to0$ for all $i>m$. Recalling that $K_i^\prime =k_i$ and using the first equation  in \eqref{111}
	and \eqref{DE} yields  $Q_i(t)\to\la_i/\theta_i=q_i$, for $i>m$.
	
	We finally show that $Q_{m}(t)\to0$. To this end, note that by the aggregate equation  in \eqref{111}
	and the property that for sufficiently large $t$, $Q_i(t)=0$, for $i\le\ell$, (from Step 1) 
	giving $dR^\#(t)/dt=\theta_mQ_m(t)=\theta_m Q^\#(t)$. Since $\la^\#=1$,  \eqref{111} shows that the  following is valid
	for all large $t$,
	\[
	\frac{dQ^\#}{dt} =1-k^\# (t) -\theta_mQ^\#(t). 
	\]
	Recalling that $k^\# (t) \to1$, and again using \eqref{DE}, 
	it follows that $Q^\#(t)\to0$. Consequently, $Q_{m}(t)\to0$.
	This completes the proof.
\end{proof}

\begin{appendix}

\section{Proof of Lemma \ref{lem1}.}
\label{sec-lem1}
\beginsec
\manualnames{A}

    \begin{proof}[Proof of Lemma \ref{lem1}]
       Fix a  measurable function $f:[0,\infty) \mapsto \R_+$ with $\int_0^\infty f dx \leq 1$. 
       For notational conciseness, define 
\begin{equation*}
   Af := 
   \int_0^\iy h^s(x) f(x)\log\frac{f(x)}{f^*(x)}dx-z_f \log z_f, 
\end{equation*}
where recall $z_f:=\int_0^\iy h^s f dx<\iy$. 
        Let  $U(x):=x\log x$, $x > 0$, $U(0)  = 0$.   Fix a non-negative  measurable function  $\psi$ on $[0,\infty)$ with
        $c_\psi = \int_0^\infty \psi dx \leq 1$. Then, note from the definition of $A$ in \eqref{estimate}  that 
 \begin{equation}
   \label{temp-A}
A(\psi)
= \int_0^\infty U\left( \frac{\psi}{f^*} \right)h^sf^*dx  - U \left( z_\psi\right). 
 \end{equation}
Since $\int_0^\infty h^sf^*dx=1$ and  $\int_0^\infty \left( \frac{\psi}{f^*} \right)h^sf^*dx  = z_\psi$,  
 the convexity of $U$ and Jensen's inequality imply 
 the nonnegativity of $A(\psi)$.  To obtain the more refined estimate \eqref{estimate}, define  
\[
V(x):= U(x)-[U'(z_f)(x-z_f)+U(z_f)].
\]
Then, the strict convexity of $U$ implies $V(x)\ge0$ and $V(x)=0$ if and only if $x=z_f$.  Using \eqref{temp-A}
we have 
\[
  A(\psi)=\int_0^\infty V\Big(\frac{\psi}{f^*}\Big)h^sf^*dx + \int_0^\infty U'(z_f)\Big(\frac{\psi}{f^*}-z_f\Big)\Big]h^sf^*dx
=\int_0^\infty V\Big(\frac{\psi}{f^*}\Big)h^sf^*dx,
\]
where the last equality uses the  definition of $z_\psi$. 
Since $V  \geq 0$,  denoting $c_\psi  := \int_0^\infty \psi dx\le1$, and recalling the functional
$R$ from \eqref{eq-Rfnal}, we have 
\begin{align*}
A(\psi) &\ge \eps_h\int_0^\infty V\Big(\frac{\psi}{f^*}\Big)f^*dx
=\eps_h\Big[\int_0^\infty \psi \log\frac{\psi}{f^*}dx-\int_0^\infty U'(z_\psi)\Big(\frac{\psi}{f^*}-z_\psi\Big)f^*dx-U(z_\psi)\Big]\\
&=\eps_h[R(\mu^\psi\|\nu_*)-c_\psi U'(z_\psi)+U'(z_\psi)z_\psi-U(z_\psi)]\\
&=\eps_h[R(\mu^\psi\|\nu_*)-c_\psi \log z_\psi -c_\psi+z_\psi]\\
&=\eps_h\{R(\mu^\psi\|\nu_*)+c_\psi  [-\log z_\psi -1+z_\psi]+z_\psi(1-c_\psi)\}\\
&\ge\eps_hR(\mu^\psi\|\nu_*),
\end{align*}
where   the third  equality used the fact that $U'(x) = \log x + 1$   and  $U'(x)  x  - U(x) = x$, and  the
last inequality uses the  elementary inequality $x  - \log x \geq 1$ for  all $x > 0$. This proves \eqref{estimate}.  
    \end{proof}

\section{An Elementary Property of Absolutely Continuous Functions}
\label{sec-ac}
\beginsec
\manualnames{B}

    The following simple property is used in Section \ref{subs-pf2supcrit}. 

\begin{lemma} \label{lem:f}
	Let $f$ be an absolutely continuous function defined on $[0,S)$ for some $0<S\leq \infty$. Suppose that there exist a time $T\in (0,S)$ and a constant $c>0$ such that $f(T)\geq c$ and for a.e.\ $t\in (T,S)$, $f^\prime(t)\geq 0$ if $f(t)<c$. Then $f(t)\geq c$ for all $t\in [T,S)$. 
\end{lemma}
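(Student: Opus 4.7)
The plan is to argue by contradiction, using continuity together with the fundamental theorem of calculus for absolutely continuous functions.

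Suppose the conclusion fails, so that there exists $t_0\in(T,S)$ with $f(t_0)<c$. First I would define
\[
\tau := \sup\{t\in[T,t_0]:f(t)\geq c\}.
\]
Since $f(T)\geq c$, the set on the right is nonempty, and since $f$ is absolutely continuous (hence continuous) on $[0,S)$, the supremum is attained, giving $f(\tau)=c$ and $\tau<t_0$. By the definition of $\tau$ and again by continuity, one has $f(t)<c$ for all $t\in(\tau,t_0]$ (if some point in $(\tau,t_0]$ had $f(t)\geq c$, it would contradict the supremum).

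Next I would invoke absolute continuity of $f$ on the interval $[\tau,t_0]\subset[0,S)$ to write
\[
f(t_0)-f(\tau)=\int_\tau^{t_0}f'(s)\,ds.
\]
By the standing hypothesis, for a.e.\ $s\in(T,S)$, $f(s)<c$ implies $f'(s)\geq 0$. Since $(\tau,t_0)\subset(T,S)$ and $f(s)<c$ for every $s\in(\tau,t_0)$, it follows that $f'(s)\geq 0$ for a.e.\ $s\in(\tau,t_0)$. The integral is therefore nonnegative, yielding
\[
f(t_0)\geq f(\tau)=c,
\]
which contradicts $f(t_0)<c$. Hence $f(t)\geq c$ for all $t\in[T,S)$, and the proof is complete.

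There is no real obstacle here: the only minor subtlety is ensuring that the "for a.e.\ $t$" hypothesis suffices to control $f(t_0)-f(\tau)$, which is exactly what absolute continuity buys via the fundamental theorem of calculus, and noting that continuity of $f$ lets us extract the interval $(\tau,t_0)$ on which $f<c$ identically.
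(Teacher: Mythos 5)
Your proof is correct and follows essentially the same route as the paper's: argue by contradiction, isolate an interval to the right of the last level-$c$ crossing on which $f<c$, and use absolute continuity (via the fundamental theorem of calculus) together with the a.e.\ sign condition on $f'$ to get a contradiction. Your explicit definition of $\tau$ and the direct integration $f(t_0)-f(\tau)=\int_\tau^{t_0}f'(s)\,ds$ simply makes precise the interval whose existence the paper's terser proof asserts.
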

\begin{proof}
	Suppose the conclusion of the lemma does not hold. 
	Then there must exist  $ T< t_1 < t_2 $ for which 
	$f(t_1) \geq c$ and $f(t_2) < c$.
	Since $f$ is absolutely continuous,
	there must exist some interval $(s_1,s_2) \subset (t_1,t_2)$
	such that $f(s) < c$  for $s \in (s_1,s_2)$ and $f^\prime (s) < 0$ for $s$ in a  subset
	${\mathcal S} \subset (s_1,s_2)$ of positive Lebesgue measure. 
	However, this contradicts the assumption of the lemma, that is, for a.e.\ $t\in [T,\infty)$, $f^\prime(t)\geq 0$ if $f(t)<c$. Hence the lemma is proved. 
\end{proof}

    \end{appendix}

\noi{\bf Acknowledgement.}
We would like to thank Amber Puha for raising the question of a full justification
of the convergence results in Theorem 3.3 of \cite{KanRam12}.

\footnotesize

\bibliographystyle{is-abbrv}


\end{document}